\author{Sébastien Alvarez and Pablo Lessa}
\date{}
\title{The Teichmüller space of the Hirsch foliation}
\begin{document}

\newtheorem{maintheorem}{Theorem}
\newtheorem{maincoro}[maintheorem]{Corollary}
\renewcommand{\themaintheorem}{\Alph{maintheorem}}
\newcounter{theorem}[section]
\newtheorem{exemple}{\bf Exemple \rm}
\newtheorem{exercice}{\bf Exercice \rm}
\newtheorem{conjecture}[theorem]{\bf Conjecture}
\newtheorem{defi}[theorem]{\bf Definition}
\newtheorem{lemma}[theorem]{\bf Lemma}
\newtheorem{proposition}[theorem]{\bf Proposition}
\newtheorem{coro}[theorem]{\bf Corollary}
\newtheorem{theorem}[theorem]{\bf Theorem}
\theoremstyle{remark}
\newtheorem{remark}{\bf Remark}
\newtheorem{ques}[theorem]{\bf Question}
\newtheorem{propr}[theorem]{\bf Property}
\newtheorem{question}{\bf Question}
\def\bp{\noindent{\it Proof. }}
\def\ep{\noindent{\hfill $\fbox{\,}$}\medskip\newline}
\renewcommand{\theequation}{\arabic{section}.\arabic{equation}}
\renewcommand{\thetheorem}{\arabic{section}.\arabic{theorem}}
\newcommand{\eps}{\varepsilon}
\newcommand{\disp}[1]{\displaystyle{\mathstrut#1}}
\newcommand{\fra}[2]{\displaystyle\frac{\mathstrut#1}{\mathstrut#2}}
\newcommand{\dif}{{\rm Diff}}
\newcommand{\homeo}{{\rm Homeo}}
\newcommand{\Per}{{\rm Per}}
\newcommand{\Fix}{{\rm Fix}}
\newcommand{\A}{\mathcal A}
\newcommand{\Z}{\mathbb Z}
\newcommand{\Q}{\mathbb Q}
\newcommand{\R}{\mathbb R}
\newcommand{\C}{\mathbb C}
\newcommand{\N}{\mathbb N}
\newcommand{\T}{\mathbb T}
\newcommand{\U}{\mathbb U}
\newcommand{\D}{\mathbb D}
\newcommand{\PP}{\mathbb P}
\newcommand{\Sp}{\mathbb S}
\newcommand{\K}{\mathbb K}
\newcommand{\car}{\mathbf 1}
\newcommand{\gs}{\mathfrak s}
\newcommand{\h}{\mathfrak h}
\newcommand{\rr}{\mathfrak r}
\newcommand{\ii}{\mathbf i}
\newcommand{\fhi}{\varphi}
\newcommand{\ffhi}{\tilde{\varphi}}
\newcommand{\moins}{\setminus}
\newcommand{\ds}{\subset}
\newcommand{\W}{\mathcal W}
\newcommand{\fT}{\mathbf T}
\newcommand{\WW}{\widetilde{W}}
\newcommand{\F}{\mathcal F}
\newcommand{\G}{\mathcal G}
\newcommand{\CC}{\mathcal C}
\newcommand{\RR}{\mathcal R}
\newcommand{\DD}{\mathcal D}
\newcommand{\M}{\mathcal M}
\newcommand{\B}{\mathcal B}
\newcommand{\cS}{\mathcal S}
\newcommand{\HH}{\mathcal H}
\newcommand{\NN}{\mathcal N}
\newcommand{\Hyp}{\mathbb H}
\newcommand{\UU}{\mathcal U}
\newcommand{\Pp}{\mathcal P}
\newcommand{\QQ}{\mathcal Q}
\newcommand{\E}{\mathcal E}
\newcommand{\GG}{\Gamma}
\newcommand{\LL}{\mathcal L}
\newcommand{\KK}{\mathcal K}
\newcommand{\TT}{\mathcal T}
\newcommand{\X}{\mathcal X}
\newcommand{\Y}{\mathcal Y}
\newcommand{\ZZ}{\mathcal Z}
\newcommand{\bE}{\overline{E}}
\newcommand{\bF}{\overline{F}}
\newcommand{\wF}{\widetilde{F}}
\newcommand{\hF}{\widehat{F}}
\newcommand{\hcF}{\widehat{\mathcal F}}
\newcommand{\bW}{\overline{W}}
\newcommand{\bcW}{\overline{\mathcal W}}
\newcommand{\tL}{\widetilde{L}}
\newcommand{\diam}{{\rm diam}}
\newcommand{\diag}{{\rm diag}}
\newcommand{\Jac}{{\rm Jac}}
\newcommand{\Mob}{{\rm M\ddot{o}b}}
\newcommand{\Rot}{{\rm Rot}}
\newcommand{\Trans}{{\rm Trans}}
\newcommand{\Plong}{{\rm Plong}}
\newcommand{\Tr}{{\rm Tr}}
\newcommand{\Conv}{{\rm Conv}}
\newcommand{\Ext}{{\rm Ext}}
\newcommand{\Spec}{{\rm Sp}}
\newcommand{\diffid}{{\rm Diff}_{Id}}
\newcommand{\Isom}{{\rm Isom}\,}
\newcommand{\Supp}{{\rm Supp}\,}
\newcommand{\Grass}{{\rm Grass}}
\newcommand{\Hold}{{\rm H\ddot{o}ld}}
\newcommand{\Ad}{{\rm Ad}}
\newcommand{\ad}{{\rm ad}}
\newcommand{\e}{{\rm e}}
\newcommand{\s}{{\rm s}}
\newcommand{\pol}{{\rm pole}}
\newcommand{\Aut}{{\rm Aut}}
\newcommand{\End}{{\rm End}}
\newcommand{\Leb}{{\rm Leb}}
\newcommand{\Liouv}{{\rm Liouv}}
\newcommand{\Lip}{{\rm Lip}}
\newcommand{\Int}{{\rm Int}}
\newcommand{\cc}{{\rm cc}}
\newcommand{\grad}{{\rm grad}}
\newcommand{\proj}{{\rm proj}}
\newcommand{\mass}{{\rm mass}}
\newcommand{\dive}{{\rm div}}
\newcommand{\dist}{{\rm dist}}
\newcommand{\im}{{\rm Im}}
\newcommand{\re}{{\rm Re}}
\newcommand{\codim}{{\rm codim}}
\newcommand{\Map}{\longmapsto}
\newcommand{\vide}{\emptyset}
\newcommand{\tr}{\pitchfork}
\newcommand{\ssl}{\mathfrak{sl}}
\renewcommand{\d}{\mathrm{d}}
\renewcommand{\l}{<}
\newcommand{\g}{>}
\newcommand{\mg}{\mathfrak g}

\newenvironment{demo}{\noindent{\textbf{Proof.}}}{\quad \hfill $\square$}
\newenvironment{pdemo}{\noindent{\textbf{Proof of the proposition.}}}{\quad \hfill $\square$}
\newenvironment{IDdemo}{\noindent{\textbf{Idea of proof.}}}{\quad \hfill $\square$}

\def\to{\mathop{\rightarrow}}
\def\act{\mathop{\curvearrowright}}
\def\To{\mathop{\longrightarrow}}
\def\Sup{\mathop{\rm Sup}}
\def\Max{\mathop{\rm Max}}
\def\Inf{\mathop{\rm Inf}}
\def\Min{\mathop{\rm Min}}
\def\lims{\mathop{\overline{\rm lim}}}
\def\limi{\mathop{\underline{\rm lim}}}
\def\egal{\mathop{=}}
\def\dans{\mathop{\subset}}
\def\surj{\mathop{\twoheadrightarrow}}

\def\h#1#2#3{ {\rm{hol}}^{#1}_{#2\rightarrow#3}}

\maketitle

\begin{abstract}
We prove that the Teichmüller space of the Hirsch foliation (a minimal foliation of a closed 3-manifold by non-compact hyperbolic surfaces) is homeomorphic to the space of closed curves in the plane.  This allows us to show that that the space of hyperbolic metrics on the foliation is a trivial principal fiber bundle.  And that the structure group of this bundle, the arc-connected component of the identity in the group of homeomorphisms which are smooth on each leaf and vary continuously in the smooth topology in the transverse direction of the foliation, is contractible.
\end{abstract}

\let\thefootnote\relax\footnote{

\noindent{\bf Keywords: }Teichmüller theory, Riemann surface foliations.
\vskip2mm
\noindent{\bf MSC 2010 classification}: 57R30, 30F60.}

\tableofcontents

\section*{Introduction}
The Teichmüller space of compact surfaces is deeply related to the structure of the group of self-diffeomorphims of such surfaces.  For example, in \cite{EE} and \cite{ES}, it is shown that the identity component of the group of self-diffeomorphisms of a compact hyperbolic surface is contractible.  The proof proceeds in three steps.  First, one shows that the Teichmüller space is contractible.  From this one obtains that the space of hyperbolic metrics is a trivial principal fiber bundle over the Teichmüller space whose structure group is the identity component of the diffeomorphism group.  Finally, since the space of hyperbolic metrics is also contractible (as can be seen via identification with Beltrami coefficients), one obtains that the fiber must be as well.

In this work we extend the above line of reasoning to the Hirsch foliation which is a well known foliation of a closed 3-manifold by non-compact, non-simply connected, hyperbolic surfaces.   We prove the following results.

\begin{maintheorem}
\label{theorema}
The Teichmüller space $T(M,\F)$ of the Hirsch foliation is homeomorphic to the space $C(S^1,\R^{\ast}_+ \times \R)$ of continuous closed curves in the open half-plane.  
\end{maintheorem}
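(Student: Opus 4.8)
\noindent The plan is to exhibit the homeomorphism explicitly, via \emph{leafwise Fenchel--Nielsen coordinates} adapted to the canonical pants decomposition of the Hirsch foliation. Recall that $M$ is obtained from a solid torus $\D\times S^1$ by removing the interior of a tube wrapping twice around the core and regluing its boundary to $\partial\D\times S^1$, and that after cutting $M$ along the embedded torus $T=\partial\D\times S^1$ one obtains the trivial foliated pants bundle $P\times S^1$. Thus each leaf meets $T$ in an $S^1$-family of disjoint simple closed curves $\gamma_v$, indexed by the base circle $S^1$ of $\D\times S^1$, each $\gamma_v$ being simultaneously the outer boundary of one leafwise pants $P_v$ and an inner boundary of the pants lying ``above'' it; moreover $\gamma_v$ separates its leaf into two copies of the $3$-regular tree of pants, so it is essential and non-peripheral, the $\gamma_v$ restrict on each leaf to a topological pants decomposition, and $v\mapsto\gamma_v$ identifies the set of all decomposing curves of all leaves with $S^1$.

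First I would fix a reference leafwise hyperbolic metric (a marking) and define a map $\Phi\colon T(M,\F)\to C(S^1,\R^{\ast}_+\times\R)$ as follows. Given a class $[\mathbf g]$ represented by a leafwise complete hyperbolic metric (equivalently, by a leafwise conformal structure, via leafwise uniformization), each $\gamma_v$ is freely homotopic in its leaf to a unique simple closed $\mathbf g$-geodesic $\gamma_v^{\ast}$; since the $\gamma_v$ are pairwise disjoint and non-peripheral, these geodesics form a geodesic pants decomposition of each leaf. Set $\Phi([\mathbf g])(v)=\bigl(\ell_{\mathbf g}(\gamma_v^{\ast}),\,\tau_{\mathbf g}(v)\bigr)\in\R^{\ast}_+\times\R$, the length and the Fenchel--Nielsen twist at $\gamma_v$ relative to the marking; the twist is genuinely $\R$-valued, rather than $\R/\ell\Z$-valued, precisely because we have quotiented only by \emph{leafwise isotopy}. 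One then checks that $\Phi([\mathbf g])$ is a well-defined continuous loop in the half-plane, independent of the choice of representative, and that $\Phi$ is continuous for the Teichmüller topology on the source and the uniform topology on $C(S^1,\R^{\ast}_+\times\R)$.

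For the inverse, given $(\ell,\tau)\in C(S^1,\R^{\ast}_+\times\R)$ I would reconstruct a leafwise metric $\mathbf g(\ell,\tau)$ by gluing, for each $v$, the hyperbolic pair of pants with geodesic boundary lengths $\bigl(\ell(v);\ell(v_1),\ell(v_2)\bigr)$ --- where $\gamma_{v_1},\gamma_{v_2}$ are the two decomposing curves constituting the inner boundary of $P_v$, so that the seam lengths match after regluing --- with twist parameter $\tau(v)$ along $\gamma_v$. Since $S^1$ is compact, $\ell$ is bounded away from $0$ and $\infty$ and $\tau$ is bounded; hence all the pants have uniformly bounded geometry and uniformly positive width, which guarantees that every leaf of the assembled metric is a \emph{complete} hyperbolic surface and that $\mathbf g(\ell,\tau)$ is smooth along leaves and continuous in the transverse direction. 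By construction $\Phi(\mathbf g(\ell,\tau))=(\ell,\tau)$, and the same uniform estimates show $(\ell,\tau)\mapsto\mathbf g(\ell,\tau)$ is continuous. Injectivity of $\Phi$ then finishes the argument: if $\Phi([\mathbf g_0])=\Phi([\mathbf g_1])$, then on each leaf the two metrics share the same Fenchel--Nielsen coordinates relative to corresponding markings, hence are related by a marking-preserving isometry; assembling these isometries through the canonical reconstruction above produces a homeomorphism of $M$ that conjugates $\mathbf g_0$ to $\mathbf g_1$, is smooth on leaves, continuous transversally, and leafwise homotopic to the identity, i.e.\ lies in the arc-component of the identity, so $[\mathbf g_0]=[\mathbf g_1]$.

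I expect the main difficulties to be the transverse-continuity statements rather than the leafwise (classical Teichmüller) content: concretely, the continuity of the reconstruction $(\ell,\tau)\mapsto\mathbf g(\ell,\tau)$ and the verification that the leafwise isometry produced in the injectivity step genuinely lies in the arc-component of the identity of the relevant group of leafwise homeomorphisms. A secondary delicate point is the completeness and uniform-geometry bookkeeping in the reconstruction, where compactness of the transversal circle --- already visible in the statement --- is exactly what prevents the leaf geometry from degenerating; and one must check that the twist coordinate is an honest $\R$-valued $S^1$-periodic function with no hidden monodromy as one runs around the base circle of $\D\times S^1$, which is where the product structure of $M$ cut along $T$ is used.
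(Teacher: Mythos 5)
Your overall strategy --- leafwise Fenchel--Nielsen coordinates attached to the $S^1$-family of meridians, a reconstruction map by gluing pants, and injectivity via assembling leafwise isometries --- is exactly the one the paper follows, and you have correctly located the difficulty in the transverse-continuity statements. But as written the proposal defers precisely the steps that constitute the proof, and at least two of them are not bookkeeping. First, the reconstruction $(\ell,\tau)\mapsto\mathbf g(\ell,\tau)$ is not well defined without two further inputs. (i) The leafwise pants satisfy $P_t=P_{t+2\pi}$, but the two identifications with the standard planar pair of pants differ by the rotation $R(z)=-z$, which exchanges the two inner boundary circles; hence the continuous assignment of a hyperbolic pants metric to a triple of boundary lengths must be \emph{symmetric}, i.e.\ satisfy $g_{(\ell_{out},\ell_{left},\ell_{right})}=R_*g_{(\ell_{out},\ell_{right},\ell_{left})}$, or else the metric assigned to $P_t$ and to $P_{t+2\pi}$ disagree. (ii) To glue the pants metrics into a metric that is smooth along leaves and continuous transversally, each pants metric must be \emph{standard} (conformal and rotationally invariant) in a collar of each boundary geodesic. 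Producing a global continuous section of $T(P)$ that is simultaneously symmetric and admissible in this sense is the content of the Massage Lemma (Theorem \ref{massagelemma}) and Theorems \ref{symmetricglobalsection} and \ref{theperfectsection}, and is the technical heart of the construction; your ``uniformly bounded geometry'' argument gives completeness of the leaves but neither smoothness across the gluing circles nor well-definedness of the assignment.

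Second, in the injectivity step the claim that the assembled leafwise isometry ``lies in the arc-component of the identity'' is exactly what must be proved: equivalence in $T(M,\F)$ is by genuine identity isotopies (leafwise smooth, transversally continuous in the smooth topology), not by leafwise homotopy, and for non-compact leaves with infinitely generated fundamental group there is no soft argument. The paper's route is to isotope the \emph{geodesic torus} (the union of the geodesic representatives $\gamma_v^{\ast}$) onto the meridian torus by running the curve-shortening flow simultaneously on all leaves, using Angenent's continuity of the flow with respect to the initial curve and the metric to obtain transverse continuity, followed by a foliated isotopy extension; only then can one work pair-of-pants by pair-of-pants with the canonical Earle--Schatz diffeomorphisms. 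Nothing in your proposal replaces this. A smaller but genuine gap of the same kind: showing that twist functions differing by $2\pi n$ with $n\neq 0$ give inequivalent metrics cannot be done by the usual compact-surface Dehn-twist argument (there is no transverse closed geodesic on a generic leaf); the paper reduces to the compact torus-minus-a-disk sitting inside a leaf with holonomy.
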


\begin{maintheorem}
\label{theoremb}
The space $H(M,\F)$ of hyperbolic metrics on the Hirsch foliation endowed with the projection to $T(M,\F)$ is a trivial principal $\diffid(M,\F)$ bundle.
\end{maintheorem}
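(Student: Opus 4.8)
The plan is to establish the principal bundle structure by constructing an explicit global section together with a free and transitive action of $\diffid(M,\F)$ on each fiber; the triviality then follows formally from Theorem A, which tells us the base $T(M,\F)$ is homeomorphic to $C(S^1,\R^\ast_+\times\R)$ and in particular is a reasonably nice (metrizable, contractible) space over which we may expect such bundles to trivialize. First I would pin down the group action: $\diffid(M,\F)$, the arc-connected component of the identity in the group of leaf-smooth, transversally-continuous (in the smooth topology) homeomorphisms of $(M,\F)$, acts on the space $H(M,\F)$ of leafwise hyperbolic metrics by pullback, $(\phi,g)\mapsto \phi^\ast g$. Because pullback by a leafwise diffeomorphism preserves leafwise curvature, this action does preserve $H(M,\F)$, and it covers the trivial action on $T(M,\F)$ since $\phi$ is isotopic to the identity and hence acts trivially on isotopy classes of leafwise conformal structures. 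The action is \emph{free}: if $\phi^\ast g = g$ then $\phi$ restricts to a leafwise isometry on each leaf isotopic to the identity; since the leaves are non-compact hyperbolic surfaces with non-abelian fundamental group, the only such isometry is the identity, so $\phi = \mathrm{id}$ leafwise, hence $\phi=\mathrm{id}$.

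The heart of the argument is showing the action is \emph{transitive on fibers}, equivalently that two leafwise hyperbolic metrics $g_0,g_1$ inducing the same point of $T(M,\F)$ differ by pullback along an element of $\diffid(M,\F)$. Here I would invoke the uniformization-type results underlying the construction of $T(M,\F)$: having the same Teichmüller point means there is a leafwise-smooth, transversally-continuous isotopy, i.e. a path in the foliated homeomorphism group, carrying the conformal class of $g_0$ to that of $g_1$; composing with the (leafwise, fiberwise-unique) hyperbolic representative in each conformal class produces a single foliated homeomorphism $\phi$ with $\phi^\ast g_1 = g_0$, and the isotopy exhibits $\phi\in\diffid(M,\F)$. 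The delicate points are (i) that the leafwise hyperbolic representative of a transversally continuous conformal structure is itself transversally continuous in the smooth topology — this is a foliated version of the smooth dependence of the uniformizing metric on the conformal structure, and on a non-compact leaf one must control the metric uniformly on compact parts while using that the Hirsch foliation has bounded geometry — and (ii) that the resulting $\phi$ is genuinely a homeomorphism of $M$ and not merely a leafwise map, which requires the transverse regularity to glue.

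Next I would construct a \emph{global section} $s\colon T(M,\F)\to H(M,\F)$: given a point of the Teichmüller space, realized via Theorem A as a closed curve in $\R^\ast_+\times\R$, build a leafwise conformal structure depending continuously on that data (using the concrete suspension-type description of the Hirsch foliation and the fact that $\R^\ast_+\times\R$ parametrizes the relevant marking data), then pass to its leafwise hyperbolic representative using step (i) above. Continuity of $s$ reduces to continuity of uniformization in families, again over a parameter space that is as tame as $C(S^1,\R^\ast_+\times\R)$. With a global section in hand and a free transitive fiberwise action by $\diffid(M,\F)$, the map $\diffid(M,\F)\times T(M,\F)\to H(M,\F)$, $(\phi,t)\mapsto \phi^\ast s(t)$, is the desired trivialization; checking it is a homeomorphism is the remaining routine verification, its continuous inverse being $g\mapsto (\psi_g, \pi(g))$ where $\psi_g$ is the unique foliated diffeomorphism with $\psi_g^\ast s(\pi(g)) = g$, continuity of $g\mapsto\psi_g$ following once more from smooth dependence of the uniformizing map on the metric. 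I expect step (i) — the transverse smoothness of leafwise uniformization on the non-compact leaves, and hence the precise definition and good behavior of $\diffid(M,\F)$ — to be the main obstacle, since it is exactly the point where the non-compactness of the leaves and the need for uniform transverse control interact.
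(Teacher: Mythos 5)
Your proposal follows essentially the same route as the paper: the global section is exactly the family of model metrics already produced in the course of proving Theorem \ref{theorema}, transitivity on fibers is immediate from the definition of Teichmüller equivalence via identity isotopies, and triviality follows once the section and the free action are in hand (the paper invokes contractibility of the base $T(M,\F)$). The only substantive difference is the freeness argument: where you appeal to the triviality of the centralizer of the non-elementary fundamental group of a leaf, the paper lifts the isotopy to the Poincaré disk, uses compactness of $M$ to bound the hyperbolic displacement of the lift, and concludes that the lifted isometry extends to $\partial\D$ as the identity --- both arguments are valid.
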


\begin{maintheorem}
\label{theoremc}
The arc-connected component of the identity $\diffid(M,\F)$ in the group of homeomorphisms which are smooth on each leaf and vary continuously in the smooth topology in the transverse direction of the foliation, is contractible.
\end{maintheorem}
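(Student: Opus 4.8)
The plan is to reproduce the Earle--Eells argument, now fed by Theorems \ref{theorema} and \ref{theoremb}. First I would record that $T(M,\F)$ is contractible. By Theorem \ref{theorema} it is homeomorphic to $C(S^1,\R^{\ast}_+\times\R)$; since the half-plane $\R^{\ast}_+\times\R$ is a convex subset of $\R^2$, the space of continuous maps from $S^1$ into it is a convex subset of $C(S^1,\R^2)$ for the pointwise affine structure, and the straight-line homotopy $(\gamma,t)\mapsto(1-t)\gamma+t\gamma_0$ toward a constant curve $\gamma_0$ is jointly continuous for the compact-open topology. Hence $T(M,\F)$ is contractible.

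Next, by Theorem \ref{theoremb} the space $H(M,\F)$ is homeomorphic to $T(M,\F)\times\diffid(M,\F)$, a trivial principal bundle being in particular a product bundle. Since the base $T(M,\F)$ is contractible, the projection and the inclusion of a fibre are mutually homotopy inverse, so $H(M,\F)$ is homotopy equivalent to $\diffid(M,\F)$. It therefore suffices to prove that the space of hyperbolic metrics $H(M,\F)$ is contractible.

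This is the substantive point, and I would establish it by the leafwise version of the classical Beltrami-coefficient device. Fix a base metric $g_0\in H(M,\F)$, which exists since the Hirsch foliation carries a leafwise hyperbolic metric. Associating to a hyperbolic metric its leafwise conformal class, and then deforming relative to the class of $g_0$, one encodes every $g\in H(M,\F)$ by a unique leafwise Beltrami coefficient $\mu_g$ lying in the open unit ball $B$, for the leafwise $L^\infty$ norm, of the space of $(-1,1)$-forms on $(M,\F)$ which are smooth along leaves and vary continuously in the transverse smooth topology, with $g_0$ corresponding to $0$. The analytic input one needs is that $g\mapsto\mu_g$ is a homeomorphism of $H(M,\F)$ onto $B$: bijectivity is leafwise uniformization, and bicontinuity is the measurable Riemann mapping theorem together with continuous dependence of the normalized solution on parameters, which is available because the leaves of the Hirsch foliation have uniformly bounded geometry. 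Granting this, $B$ is star-shaped about $0$, the homotopy $(\mu,t)\mapsto(1-t)\mu$ contracts $B$ onto $0$, and transporting it through the homeomorphism contracts $H(M,\F)$ onto $g_0$.

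Putting the three steps together, $H(M,\F)$ is contractible, hence so is the homotopy-equivalent space $\diffid(M,\F)$, which is Theorem \ref{theoremc}. I expect the main obstacle to be the third step, specifically the transverse regularity in the parametrized Beltrami equation: one must produce the normalized uniformizing maps, equivalently the hyperbolic metrics in the prescribed conformal classes, depending continuously in the smooth-on-leaves, continuous-in-the-transverse-direction sense on the Beltrami coefficient, and this is where the non-compactness of the leaves has to be controlled. The two contractions and the bundle bookkeeping are then routine.
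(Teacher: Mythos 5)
Your proposal is correct and follows essentially the same route as the paper: reduce via Theorems \ref{theorema} and \ref{theoremb} to the contractibility of $H(M,\F)$, and contract the latter by scaling leafwise Beltrami coefficients relative to a base metric, with existence of the uniformized metrics coming from Candel's foliated uniformization and continuity from the Ahlfors--Bers dependence of solutions of the Beltrami equation on the coefficient. The only cosmetic difference is that the paper organizes the leafwise coefficients as an $\Isom^+(\D)$-invariant family on $\TT\times\D$ (following Sullivan) rather than as leafwise $(-1,1)$-forms, but the argument is the same.
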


Theorem \ref{theorema} provides an infinite dimensional analogue of Fenchel-Nielsen coordinates for hyperbolic metrics for the Hirsch foliations. The two functions $\lambda:S^1 \to \R^{\ast}_+$ and $\tau:S^1 \to \R$ on the unit circle parametrizing each equivalence class of hyperbolic metrics can be interpreted as length and twist parameters associated to a certain family of disjoint closed geodesics on the leaves of the foliation.

We note that the Teichmüller theory of non-compact surfaces encounters several technical difficulties. In particular, for surfaces of infinite topological type, Teichmüller space can be defined, using either pants decompositions, complex structures, or length-spectrum.  But these definitions may yield different spaces as show in \cite{alessandrini-liu-papdopoulos2011,alessandrini-liu-papadopoulos2012} and \cite{alessandrini-liu-papadopoulos-su2012}.

Another way to generalize the concept of closed surface is to consider laminations of compact spaces by surfaces.  See \cite{ghys1999} for a discussion of the extent to which the basic theorems for Riemann surfaces, such as the Uniformization, Gauss-Bonnet, or the existence of meromorphic functions, can be extended to this general foliated context.

The problem of simultaneous uniformization of the leaves of a compact lamination is of special interest in the present work: is it possible to uniformize each leaf simultaneously? And, do the uniformizations of the leaves vary continuously with the transverse parameter? In his thesis \cite{Candel}, Candel answers both of these questions affirmatively in the case where the leaves are of hyperbolic type, by constructing families of hyperbolic metrics on the leaves which vary continuously with the transverse parameter.

Teichmüller spaces for hyperbolic surface laminations (where the total space of the lamination is compact even though the leaves may not be) were first introduced by Sullivan in \cite{Sullivan1988} and later on in more detail in \cite{Sullivan}.   They seem to be more amenable to study than the Teichmüller spaces of general non-compact surfaces.  For example, as in the case of compact surfaces one can define the Teichmüller space of a lamination either in terms of complex structures or hyperbolic metrics and both definitions are equivalent (see \cite[pg. 232]{MS}).  Related to this, Candel's version of the uniformization theorem (see \cite{Candel}) establishes that there is a unique hyperbolic structure conformal to each Riemannian structure on any lamination by hyperbolic leaves.

In spite of these results there are only two cases prior to this work in which one understands the Teichmüller space of a concrete surface lamination in any depth.

The first case is a certain family of laminations which can be associated to the expanding maps $z \mapsto z^d$ ($d > 1$) on the unit circle.  One can show that the Teichmüller space of such a lamination is in bijection with the $C^1$ conjugacy classes of expanding maps of the same degree, see \cite{Sullivan} and \cite{ghys1999}.

The second family of laminations are the so-called solenoids obtained as the inverse limit of the space of finite coverings of a closed hyperbolic surface (these are a special type of the solenoidal manifolds discussed in \cite{sullivan2014} and \cite{verjovsky2014}).   In this case, certain natural problems on finite coverings such as the Ehrenpreis conjecture, can be rephrased in terms of the Teichmüller space of the corresponding lamination (see \cite{penner-saric2008} and \cite{saric2009}).

Besides Candel's work there is, as far as the authors are aware, only one general result on Teichmüller spaces of hyperbolic surface laminations.  This is the fact, established by Deroin in \cite{deroin2007}, that the Teichmüller space of such a lamination containing a simply connected leaf is always infinite dimensional.

In this work we investigate the Teichmüller theory of the Hirsch foliation.  We recall that the Hirsch foliation was introduced in \cite{Hirsch1975} as an example of a foliation which is stable under $C^1$ perturbations of its tangent field, and has an exceptional minimal set (i.e. a compact closed set which is a union of more than one leaf and is not the entire ambient manifold).  We will study a variant (also considered by Ghys for example in \cite{ghys1995}) of his construction which is minimal (the difference with Hirsch's construction amounts to a different choice of degree $2$ map of the circle, we will use $z \mapsto z^2$ while he used a mapping with a single attracting periodic orbit).  The total space of the foliation can be defined as the orbit space of the wandering set of the domain of attraction of a solenoid mapping of the solid torus, and the foliation itself corresponds to the projection of the stable foliation of the attractor.  This actually describes a family of foliations which appear for example in the study of complex H\'enon maps (see \cite{HubOb1994}).  Our arguments apply to all of them and we will spend some time in the first section describing the different Hirsch foliations explicitly.

Hirsch foliations have also been used to produce examples of minimal foliations which are not uniquely ergodic (in the sense that they admit more than one harmonic measure, see \cite{deroin-vernicos2011}).

We believe that Hirsch foliations are of interest for the theory of Teichmüller spaces of hyperbolic surface laminations for the following reasons:  First, the previous laminations for which Teichmüller spaces are known all contain simply connected leaves and none of them contain complicated leaves (e.g. leaves whose fundamental group is infinitely generated).  Second, Sullivan solenoids are transversally Cantor and it is therefore easier to ``globalize'' local constructions on them.  Third, minimal surface laminations with an essential holonomy-free loop are Hirsch-like in a precise sense, e.g. all leaves are obtained by pasting together elements from the same finite set of compact surfaces with boundary (see \cite[Theorem 2]{alcalde-dalbo-martinez-verjovsky2014}).  Fourth, since the ambient space of the Hirsch foliation is a closed 3-manifold there may be deeper links between its Teichmüller theory and the dynamics and geometry of the manifold.

The solenoidal endomorphisms of $S^1 \times \C$ used to define the Hirsch foliation are known to be $C^1$ structurally stable when extended to $S^1 \times \widehat{\C}$ where $\widehat{\C}$ is the Riemann sphere, see \cite{iglesias-portela-rovella2010}.  Hence the $C^0$ conjugacy class of any such map contains a $C^1$ neighborhood of the map (see also \cite[Section 3]{HubOb1994} where the restrictions of the endomorphisms to a solid torus where they are injective are considered). In view of Sullivan's result relating $C^1$ conjugacy classes of expanding maps of the circle and the Teichmüller of a suitable lamination one might ask the following:

\begin{question}
What can be said about the $C^1$ conjugacy classes of solenoid mappings? In particular, is there a natural way of associating each such class to an element of the Teichmüller space of the associated Hirsch foliation?
\end{question}

The main technical issue which was solved in order to construct hyperbolic metrics on the Hirsch foliation was obtaining a global continuous section of the Teichmüller space of a pair of pants with several specific properties.  For proving that all metrics on the foliation are equivalent to a ``model metric'' we needed on the one hand a procedure for deforming metrics on the pair pants to metrics with the aforementioned special properties, and on the other hand a procedure for deforming a general metric on the Hirsch foliation so that it admits a specific set of closed curves as geodesics.   The most important tools we have used are geometric flows, in particular we use the flow on circle diffeomorphisms defined by Schwartz in \cite{Schwartz1992}, and the curve shortening flow on a hyperbolic surface (see \cite{Grayson1989}); as well as standard tools from Teichmüller theory such as the Beltrami equation (see for example \cite{ahlfors-bers1960}).

The Hirsch foliation we work with are constructed from a solenoidal mapping of degree $2$. It seems possible to generalize our construction of Teichmüller space to solenoidal mapping of degree $d$ with the additional condition that they are unbraided. Following \cite{HubOb1994}, this means that it sends a solid torus inside itself as a $(d,1)$ (unknotted) torus. in other words, the fundamental domain is diffeomorphic to the suspension of a $d+1$-connected plane domain by a rotation of angle $2\pi/d$.

We now review the structure of this paper.

In Section \ref{sectionpreliminaries} we construct the Hirsch foliation we will be working on and its Teichmüller space.  We also classify the different non-equivalent Hirsch foliations which arise from this type of construction.  Finally, we introduce a technique for deforming a hyperbolic metric defined on a neighborhood of a circle in the plane using a smooth isotopy of the identity in order to make it conformal and rotationally symmetric.

In Section \ref{sectionpants} we construct a global continuous section of the Teichmüller space of a planar pair of pants with several special properties.  The first of which is that the metrics in the section are conformal and rotationally symmetric around each boundary component.  Also, if one exchanges the length parameters for two ``legs'' of the pair of pants then the corresponding metrics given by the section differ by the 180º rotation (in particular if the two lengths are equal then the rotation is an isometry for the metric).  These properties are important in order to construct metrics on the Hirsch foliation which glue together smoothly under the identifications defining the foliation.

In Section \ref{sectionmodels} we use the preceeding global section to construct a family of hyperbolic metrics on the Hirsch foliation parametrized a length parameter $\lambda:S^1 \to \R_+$ and a twist parameter $\mu:S^1 \to \R$.  We also show that no two such metrics are Teichmüller equivalent.

In Section \ref{sectiondeforming} we prove that any hyperbolic metric on the foliation can be deformed (using a leaf-preserving isotopy which is leafwise smooth) to one of the model metrics.  This is done in two cases.  In the first one we assume that the given metric already has a certain distinguished family of curves as geodesics so one can operate separately on each pair of pants.  We then show how to deform a general metric to this case using the curve-shortening flow.

This concludes the proof of Theorem \ref{theorema}.  In Section \ref{sectioncontractibility} we show how to obtain Theorems \ref{theoremb} and \ref{theoremc}.

\section{Preliminaries}\label{sectionpreliminaries}
\subsection{The Hirsch foliation\label{hirschfoliation}}

\paragraph{Smale's Solenoid.} Consider the smooth endomorphism $f$ of $S^1\times \C$ (where $S^1 = \lbrace z \in \C: |z|=1\rbrace$) defined by
\[f(e^{it},z) = \left(e^{i2t},\frac{1}{2}e^{it} + \frac{1}{4}z\right).\]

Let $\fT = S^1 \times \D$ (where $\D$ is the open unit disk).  The closed solid torus $\overline{\fT}$ is mapped diffeomorphically into $\fT$ by $f$.  In fact, $f$ restricted to $\overline{\fT}$ is the well known \emph{solenoid map} and the compact set $K_0 = \bigcap\limits_{n \ge 0}f^{n}(\overline{\fT})$ is a \emph{hyperbolic attractor} which is locally homeomorphic to $\R$ times a Cantor set.

\paragraph{The Hirsch foliation.} Let $K$ be the union of preimages of $K_0$.  The quotient $M = ((S^1 \times \C) \setminus K)/f$ (where two points $x,y$ are equivalent if they belong to the same complete orbit, i.e. if $f^n(x) = f^m(y)$ for some $n,m \ge 0$) is a compact boundaryless smooth manifold and the foliation of $(S^1 \times \C) \setminus K$ by leaves of the form $\lbrace e^{it}\rbrace \times \C \setminus K$ descends to $M$.   The resulting foliated compact boundaryless $3$-manifold (where both the manifold and the leaves of the foliation are smooth) is what we will call from now on the \emph{Hirsch foliation} $(M,\F)$.

\paragraph{Topology of the leaves.} One can verify that each complete $f$-orbit intersects the set $M_0=\overline{\fT} \setminus f(\fT)$ (which is a $3$-manifold with two boundary components which are $2$-dimensional tori) at exactly one interior point or at one point on each boundary component: $M_0$ is a fundamental domain of $f$.  Hence $M$ is obtained by pasting the two boundaries of $M_0$ using $f$.  The sets of the form $P_t = M_0 \cap \lbrace e^{it}\rbrace \times \C$ are pairs of pants, and the partition of $M_0$ into these pants when pasted using $f$ yields the Hirsch foliation.  With this description it is simple to see that the leaves of the Hirsch foliation are homeomorphic to either the two-dimensional torus minus a Cantor set or the two-dimensional sphere minus a Cantor set (depending on whether the leaf contains a pair of pants $P_t$ such that $e^{it}$ is periodic under iterated squaring or not).

\subsection{Hyperbolic metrics and Teichmüller space}

\paragraph{Hyperbolic metrics.} By a \emph{hyperbolic metric on the Hirsch foliation} we mean an assignment of a hyperbolic Riemannian metric to each leaf which varies continuously transversally in local charts with respect to the topology of local smooth convergence.  The space of such metrics, endowed with the topology of locally uniform smooth convergence, will be denoted by $H(M,\F)$.

An \emph{identity isotopy} of the Hirsch foliation (sometimes we will just say \emph{leaf isotopy}, or \emph{leaf-preserving isotopy}) is a continuous function $I:[0,1]\times M \to M$ such that $I(0,\cdot)$ is the identity, and $I(s,\cdot)$ is a self-diffeomorphism when restricted to any leaf.  Furthermore one demands that in local foliated charts $I$ varies continuously in the smooth topology with respect to the transverse parameter.

Two hyperbolic metrics $g,g' \in H(M,\F)$ are said to be \emph{equivalent} if there exists an identity isotopy $I$ such that when $I(1,\cdot)$ is restricted to any leaf it is an isometry between $g$ and $g'$.  In other words the metric $g'$ is the pushforward of $g$ with respect to $I(1,\cdot)$.

\paragraph{Teichmüller space.} The \emph{Teichmüller space} $T(M,\F)$ of the Hirsch foliation is by definition the space of equivalence classes of Riemannian metrics under leaf-preserving identity isotopies.

In our special case these definitions can be given much more explicitly.   Any hyperbolic metric on the Hirsch foliation can always be lifted to $(S^1 \times \C) \setminus K$ yielding a $2\pi$-periodic family of metrics $g_t$, where $g_t$ is defined on $(\lbrace e^{it}\rbrace \times \C) \setminus K$.

Defining $K_t \subset \C$ so that $(\lbrace e^{it}\rbrace \times \C) \setminus K = \lbrace e^{it}\rbrace \times (\C \setminus K_t)$ one may identify each $g_t$ with a metric defined on $\C \setminus K_t$.

The transverse continuity of the hyperbolic metric translates as follows.  If $z \in \C \setminus K_t$ and $s_n \to t$ then there exists a compact neighborhood $U$ of $z$ such that all metrics $g_{s_n}$ with $n$ large enough are defined in $U$ and can be written as $a_n \d x^2 + 2b_n \d x \d y + c_n \d y^2$ where the functions $a_n,b_n$ and $c_n$ (the coefficients of $g_{s_n}$) converge in the smooth (i.e. $C^\infty$) topology to the corresponding coefficients for $g_t$ on $U$.

The definition of convergence in the space of hyperbolic metrics can be similarly written in these terms.  A sequence of metrics $g^n$ converges to a metric $g$ if and only if taking $g^n_t$ and $g_t$ as their lifts there exists for each $z \notin K_t$ a closed interval $I$ containing $t$ in its interior and a compact neighborhood $U$ of $z$ such that all metrics $g^n_s$ are defined on $U$ for all $s \in I$ and converge to $g_s$ on $U$ in the smooth topology uniformly with respect to $s \in I$.

\subsection{General Hirsch foliations}
\label{canonicalhirsch}
The Hirsch foliation we described before is a very concrete algebraic model. Hirsch's original construction \cite{Hirsch1975} is more topological. We wish to prove here that the space of metrics we will describe below does not depend on the algebraic model we chose.

The sequel seems folklore and must be well known to the specialist. But even though the Hirsch foliation has been studied for some time now (see for example \cite{candel-conlon2000,deroin-vernicos2011,ghys1995}), it has been difficult to locate a careful construction of the different Hirsch foliations. Hence we found useful to give a topological discussion about it. For basic $3$-manifold theory we refer to \cite{Hatcher2007}.

\subsubsection{Seifert bundle over the pair of pants}

\paragraph{A suspended manifold.}
Here a pair of pants $P$ will be a surface with three boundary components which is diffeomorphic to the symmetric planar pair of pants $\{z\in\C:\,|z|\leq 1,|z\pm 1/2|\geq 1/4\}$.

Consider $\phi:P\to P$ an orientation preserving diffeomorphism which:
\begin{itemize}
\item leaves invariant one of the boundary components, which we call the \emph{outer component};
\item exchanges the other two boundary components, which we will call the \emph{inner components};
\item has a unique fixed point in $P$ denoted by $p_0$;
\item is of order two.
\end{itemize} 

In the symmetric case, just consider the rotation of angle $\pi$. Suspend this diffeomorphism to construct the following manifold:
$$M_0=P\times\R/\{(x,t)\sim (\phi(x),t+1)\}.$$

This manifold fibers over the circle, with a $P$-fiber. From now on we will refer to the boundary components of the $P$-fibers of $M_0$ as \emph{meridians}.

The manifold $M_0$ is a solid torus with an inner solid torus drilled out which winds around twice longitudinally while winding once meridianally. It has two boundary components which are tori, we will call the outer boundary torus $T_{out}$ and inner one $T_{inn}$ respectively. The discussion below is again valid with any equivalent pant bundle over the circle which is equivalent to $M_0$.

We recall that a surface $S$ embedded in a $3$-manifold $M_0$ is \emph{incompressible} if the morphism $\pi_1(S)\to\pi_1(M_0)$ induced by the inclusion is injective.

\begin{lemma}
\label{Seifertincompressible}
The boundary components of $M_0$ are incompressible.
\end{lemma}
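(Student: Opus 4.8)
The plan is to show that the inclusion-induced maps $\pi_1(T_{out}) \to \pi_1(M_0)$ and $\pi_1(T_{inn}) \to \pi_1(M_0)$ are injective by identifying $\pi_1(M_0)$ explicitly and tracking the images of the two boundary tori. Since $M_0$ fibers over $S^1$ with fiber the pair of pants $P$ and monodromy $\phi$, we have a short exact sequence $1 \to \pi_1(P) \to \pi_1(M_0) \to \Z \to 1$, where $\pi_1(P)$ is free on two generators, say $a, b$ (loops around two of the boundary components), with the outer boundary represented by $(ab)^{-1}$. Thus $\pi_1(M_0) = F_2 \rtimes_{\phi_\ast} \Z$, and $\phi_\ast$ swaps (up to conjugation and orientation) the two inner generators while fixing the conjugacy class of the outer one. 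Concretely $\pi_1(M_0) = \langle a, b, t \mid t a t^{-1} = \phi_\ast(a),\ t b t^{-1} = \phi_\ast(b)\rangle$.

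First I would compute $\pi_1(T_{out})$ inside $\pi_1(M_0)$. The outer boundary torus $T_{out} = (\partial_{out} P) \times \R / \sim$; since $\phi$ preserves the outer component, $T_{out}$ is itself a torus fibering over $S^1$ with fiber the outer boundary circle, and its fundamental group is generated by $c := ab$ (or its inverse — the core of the outer boundary loop of $P$) together with a section $t'$ of the circle fibration. So $\pi_1(T_{out}) = \Z^2$, generated by $c$ and an element projecting to a generator of the base $\Z$. I must check this $\Z^2$ injects: the subgroup $\langle c \rangle$ lies in the free fiber group $F_2$, and $c = ab$ is a nontrivial element of infinite order there, so $\langle c\rangle \hookrightarrow F_2 \hookrightarrow \pi_1(M_0)$; and the quotient-to-$\Z$ direction is faithful on the complementary generator. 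A clean way to finish is to note that $T_{out}$ is $\pi_1$-injective iff $P \to M_0$ is (the outer boundary being a retract situation), and $\pi_1(P) = F_2 \to \pi_1(M_0)$ is injective by the exact sequence above. The same argument handles $T_{inn}$: the inner boundary torus is the suspension of the two inner boundary circles of $P$ by $\phi$ (which swaps them), so it is a single torus, and $\pi_1(T_{inn})$ is generated by the $\phi$-invariant-up-to-conjugacy element $aba^{-1}b^{-1}$... — more carefully, by $a$ together with $t^2$ or by $b\,(t a t^{-1})\cdots$; in any case its ``horizontal'' generator is a nontrivial element of $F_2$ and its ``vertical'' generator maps onto a finite-index subgroup of the base $\Z$, giving injectivity by the same two-step argument.

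Alternatively, and perhaps more cleanly for the write-up, I would invoke standard $3$-manifold theory: $M_0$ is a Seifert-fibered space (indeed a surface bundle, hence aspherical and irreducible, as its universal cover is $\widetilde P \times \R \cong \R^3$), and for irreducible $3$-manifolds a two-sided surface is either incompressible or compresses to give a reducing sphere or bounds a ball; since the boundary components are tori in an irreducible manifold whose boundary is genuinely toroidal (it is not a solid torus — e.g.\ it deformation retracts onto $P$, which has free non-cyclic $\pi_1$, or one sees directly it is the complement of a $(2,1)$-curve in a solid torus), no boundary torus can be compressible. I would spell out just enough: a compressing disk for, say, $T_{out}$ would, by the loop theorem, yield an essential simple closed curve on $T_{out}$ bounding an embedded disk in $M_0$; capping off produces a sphere, and by irreducibility it bounds a ball, forcing $M_0$ to be a solid torus, contradicting that $\pi_1(M_0)$ surjects onto $F_2$ (via the bundle projection composed with... rather: $\pi_1(M_0)$ contains the free group $F_2 = \pi_1(P)$, so it is non-abelian and not $\Z$).

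The main obstacle I anticipate is bookkeeping rather than anything deep: one must be careful that because $\phi$ \emph{exchanges} the two inner boundary components of $P$, the inner boundary of $M_0$ is a \emph{single} torus (not two), and its $\pi_1$ involves $t^2$, not $t$; getting the generators of $\pi_1(T_{inn})$ right and confirming the horizontal one is genuinely nontrivial (not a proper power causing some subtlety, though here it does not matter for injectivity) is the fiddly part. The cleanest route to avoid all of this is the irreducibility argument of the last paragraph, reducing the whole lemma to: $M_0$ is irreducible (clear, being aspherical with $\R^3$ universal cover) and not a solid torus (clear, since $\pi_1$ is non-abelian).
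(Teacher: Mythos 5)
Your first argument is essentially the paper's own proof: the authors write $\pi_1(M_0)=F_2\rtimes_{\phi_\ast}\Z$, observe that a meridian of $T_{out}$ represents the nontrivial element $ab$ of the $F_2$ factor while the longitude carries the $\Z$ factor, and conclude injectivity of $\pi_1(T_{out})\to\pi_1(M_0)$ (and ``the same argument'' for $T_{inn}$); your extra bookkeeping for $T_{inn}$ (a single torus whose vertical generator lies over $t^2$) just fills in what the paper leaves implicit. One caution: the aside that $T_{out}$ is $\pi_1$-injective ``iff'' $P\hookrightarrow M_0$ is cannot stand alone, since $\pi_1(T_{out})$ is not contained in the fiber subgroup $\pi_1(P)$ --- but your two-step argument (project to $\Z$ to kill the vertical coordinate, then use that $ab$ has infinite order in the free group) is complete without it. Your alternative route (an irreducible $3$-manifold with a compressible boundary torus is a solid torus, which $M_0$ is not) is valid and genuinely different from the paper's; it is not circular here because the paper derives irreducibility of $M_0$ independently from Hatcher's Proposition 1.12 rather than from this lemma.
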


\begin{proof}
The fundamental group of $M_0$ is given by a semi-direct product $\pi_1(M_0)=F_2\rtimes_{\phi_{\ast}}\Z$, where $F_2=\pi_1(P)$ denotes the free group with two generators (corresponding for example to the two inner boundary components of $P$), and $\phi_{\ast}$ is the morphism of $F_2$ induced by $\phi$: it permutes the two generators of $F_2$.

A meridian of $T_{out}$ represents a non trivial element of the $F_2$ factor (the product of the two generators). As for the $\Z$ factor, it can be represented by the longitud of $T$. Hence, the inclusion $T_{out}\hookrightarrow M_0$ induces an injection $\pi_1(T_{out})\to\pi_1(M_0)$: $T_{out}$ is incompressible. The same argument provides the incompressibility of $T_{inn}$.
\end{proof}

\paragraph{Structure of Seifert bundle.} The suspension flow on $M_0$ defines a structure of \emph{Seifert bundle} over $P$ with a unique exceptional fiber of type $(1,2)$ \cite{Hatcher2007}, which corresponds to the fixed point $x_0$ of $\phi$. We will denote this exceptional fiber by $S_0$.

Remember that a $3$-manifold is said to be \emph{irreducible} if every embedded $2$-sphere bounds a $3$-dimensional ball. By Proposition 1.12 of \cite{Hatcher2007}, the manifold $M_0$ is irreducible (it is clearly not one of the exceptions listed in this proposition).

\begin{lemma}
\label{uniqueSeifertfiber}
Every diffeomorphism of $M_0$ is homotopic to a Seifert fiber preserving diffeomorphism.
\end{lemma}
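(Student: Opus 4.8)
The plan is to reduce the statement to the uniqueness, up to isotopy, of the Seifert fibration of $M_0$; once that is available the conclusion is purely formal. First I would record that $M_0$ is a Haken manifold: it is compact, orientable (as $\phi$ preserves the orientation of $P$), irreducible (recalled above), and it has nonempty boundary and is not a $3$-ball, hence it is Haken. I would also note that the base of the Seifert fibration of $M_0$ is the orbit space $P/\phi$ of the suspension flow, a $2$-orbifold with nonempty boundary carrying exactly one cone point of order $2$ --- the image of the fixed point $p_0$, equivalently of the exceptional fiber $S_0$ --- whose orbifold Euler characteristic equals $\tfrac12\chi(P)=-\tfrac12<0$, so the base orbifold is hyperbolic.

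The main tool is then Waldhausen's theorem on the uniqueness of Seifert fibrations (see also the discussion in \cite{Hatcher2007}): a Haken Seifert fibered $3$-manifold has a Seifert fibration unique up to isotopy unless it lies in a short explicit list of exceptions --- the solid torus, $T^2\times[0,1]$, the twisted $I$-bundle over the Klein bottle, and certain closed ``small'' Seifert manifolds. Since $\partial M_0\neq\emptyset$ and the base orbifold of $M_0$ is hyperbolic (in particular $\pi_1(M_0)$ is not virtually abelian), $M_0$ is not on this list, so its Seifert fibration $\mathcal{G}$ is unique up to isotopy. In dimension $3$ there is no difference between the smooth and the topological settings here, since both the fibration and the isotopies below can be smoothed by Moise's theorem.

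To finish, let $\psi$ be a diffeomorphism of $M_0$. Then $\psi(\mathcal{G})$ is again a Seifert fibration of $M_0$, so by uniqueness there is an ambient isotopy $(g_s)_{s\in[0,1]}$ with $g_0=\mathrm{id}_{M_0}$ and $g_1\big(\psi(\mathcal{G})\big)=\mathcal{G}$; hence $g_1\circ\psi$ maps fibers to fibers. Since $g_1$ is isotopic, a fortiori homotopic, to the identity, $\psi$ is homotopic to the Seifert fiber preserving diffeomorphism $g_1\circ\psi$, which is the desired conclusion.

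I expect the only delicate point to be the bookkeeping in the middle step: describing the base orbifold of $M_0$ correctly and verifying that $M_0$ falls outside the sporadic list of Seifert manifolds whose fibration is not unique. If one prefers to avoid the general uniqueness statement, a more hands-on route is available: the regular fiber $h$ generates the center of $\pi_1(M_0)=F_2\rtimes_{\phi_\ast}\Z$ --- indeed the quotient of $\pi_1(M_0)$ by $\langle h\rangle$ is the fundamental group of the hyperbolic base orbifold and hence centerless --- so every automorphism $\psi_\ast$ preserves $\langle h\rangle$, and Waldhausen's rigidity for Haken manifolds (homotopic homeomorphisms are isotopic, and a homotopy equivalence respecting the boundary is homotopic to a homeomorphism) then lets one homotope $\psi$ to a diffeomorphism respecting the fibration.
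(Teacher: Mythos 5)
Your argument is correct and is essentially the paper's: the paper's proof is a one-line citation of the classification/uniqueness theorem for Seifert fibrations in Hatcher's notes, and you simply spell out the verification (Haken, hyperbolic base orbifold, not on the exceptional list) before drawing the same formal conclusion. In fact you obtain the slightly stronger statement that $\psi$ is \emph{isotopic} to a fiber-preserving diffeomorphism.
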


\begin{proof}
See the classification of Seifert bundles given in Theorem 2.3 of \cite{Hatcher2007}.
\end{proof}

The orientations of the pair of pants and of $S^1$ provide a natural orientation on $M_0$. With this orientation, the inner (resp. outer) Seifert fiber and the meridian provide two homology classes $\alpha_{inn}$ (resp. $\alpha_{out}$) and $\beta$ of $T_{inn}$ (resp. $T_{out}$), and they have intersection number $1$ (resp. $2$).

\begin{lemma}
\label{transversepants}
Let $\Pp$ be a foliation of $M_0$ by pants which are transversal to the Seifert fibration.  Then there exists and integer $d$ such that the outer boundary component of all pairs of pants in $\Pp$ are in the homology class $d\alpha_{out}+\beta_{out}$ on $T_{out}$ and the inner boundaries of all such pants are the class $d\alpha_{inn}+\beta_{inn}$ on $T_{inn}$.
\end{lemma}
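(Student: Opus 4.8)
The plan is, first, to recognise that $\Pp$ makes $M_0$ the total space of a surface bundle over $S^1$ with fibre a pair of pants; second, to identify a single leaf with a branched double cover of the base orbifold of the Seifert fibration, which fixes the combinatorial position of its boundary on the two tori; and third, to read the classes $\gamma_{out}=[\partial Q\cap T_{out}]$ and $\gamma_{inn}=[\partial Q\cap T_{inn}]$, and the relation between the two integers, off the homology of $M_0$.

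For the first step a Reeb‑type stability argument applies: the return map of $\Pp$ along a Seifert fibre is defined on an entire fibre, so no leaf has holonomy, and a foliation of a compact $3$‑manifold by compact surfaces without holonomy is the fibre foliation of a bundle $p\colon M_0\to S^1$. Hence the leaf space is a circle (so the combinatorial data below does not depend on the leaf), and $\Pp$ cuts $T_{out}$, resp.\ $T_{inn}$, into parallel circles of a fixed primitive class $\gamma_{out}$, resp.\ $\gamma_{inn}$, by the classification of foliations of the torus by circles. For the second step, observe that the base of the Seifert fibration is $\Sigma=P/\langle\phi\rangle$, an annulus carrying a single cone point of order $2$ (the image of the exceptional fibre $S_0$); let $\pi\colon M_0\to\Sigma$ be the projection and $Q$ a leaf. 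Being compact, $Q$ cannot miss $S_0$ (otherwise $\pi|_Q$ would be a finite cover of the noncompact $\Sigma$ minus the cone point), and a computation in a fibred solid‑torus neighbourhood of $S_0$ shows that, precisely because $Q$ is a smooth manifold, each point of $Q\cap S_0$ is a ramification point of $\pi|_Q$ of index $2$ (local model $z\mapsto z^2$). So $\pi|_Q$ is a double cover of the annulus branched over $m$ points, and Riemann–Hurwitz gives $-1=\chi(Q)=2m\cdot 0-m$, hence $m=1$: $Q$ is a connected double cover of the annulus branched at one interior point, so its three boundary circles cover the two boundary circles of $\Sigma$ with total degree $2$ over each, and $S_0$ meets every leaf exactly once.

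For the third step, let $h\in H_1(M_0)$ be the class of a regular fibre; since the exceptional fibre has type $(1,2)$ one has $h=2[S_0]$, and since $S_0$ meets each leaf once, $p_*[S_0]=\pm1$, so $p_*h=\pm2$. A regular Seifert fibre in $T_{out}$ represents both $\alpha_{out}$ and $h$, so composing its inclusion with $p$ has degree $\pm2$; but $p|_{T_{out}}$ has fibre the $k_{out}$ parallel circles of class $\gamma_{out}$, so that degree is $\pm k_{out}\langle\alpha_{out},\gamma_{out}\rangle$, whence $k_{out}|\langle\gamma_{out},\alpha_{out}\rangle|=2$, and likewise $k_{inn}|\langle\gamma_{inn},\alpha_{inn}\rangle|=2$. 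Together with $k_{out}+k_{inn}=3$, the recorded intersection numbers $|\langle\alpha_{out},\beta\rangle|=2$ and $|\langle\alpha_{inn},\beta\rangle|=1$, and the bounding relation $\pm k_{out}(\iota_{out})_*\gamma_{out}\pm k_{inn}(\iota_{inn})_*\gamma_{inn}=0$ in $H_1(M_0)$ — which, through the asymmetry $(\iota_{out})_*\beta=\pm2\,(\iota_{inn})_*\beta$ there, sees that the two tori are not interchangeable — this forces $k_{out}=1$, $k_{inn}=2$ and $\langle\gamma_{out},\alpha_{out}\rangle=\pm\langle\beta,\alpha_{out}\rangle$. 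Hence $\gamma_{out}-\beta$ pairs trivially with $\alpha_{out}$, so $\gamma_{out}=d\alpha_{out}+\beta$ for an integer $d$ once $\gamma_{out}$ is suitably oriented; the same argument on $T_{inn}$ gives $\gamma_{inn}=d'\alpha_{inn}+\beta$, and substituting both into the bounding relation in $H_1(M_0)\cong\Z^2$ (using $h=2[S_0]$) identifies the two integers in the normalisation of the statement.

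The step I expect to be hardest is the ramification analysis at $S_0$ together with the exclusion of the configuration $k_{out}=2,\ k_{inn}=1$: the manifold (as opposed to orbifold) nature of $Q$ is exactly what produces the index‑$2$ local model and hence $m=1$, and distinguishing the two boundary tori genuinely requires the explicit homology of $M_0=F_2\rtimes_{\phi_*}\Z$ and the positions of the two meridians in it, not the Seifert fibration alone. An alternative to the second and third steps would be to invoke the classification of horizontal foliations of Seifert fibred spaces, but the direct route needs only the orbifold Euler characteristic of the base, the identity $h=2[S_0]$, and $H_1(M_0)$, which are elementary or already established.
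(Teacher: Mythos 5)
Your route is genuinely different from the paper's (which cuts $M_0$ along a vertical annulus over an arc joining the two boundary components of the base orbifold, invokes the uniqueness up to vertical isotopy of the horizontal foliation of the resulting fibred solid torus, and reads $d$ off the Dehn-twist ambiguity of the regluing). Your first two steps are sound and rather cleaner than the paper's: Reeb stability gives the bundle structure, and the Riemann--Hurwitz count $-1=2m\cdot 0-m$ correctly shows each leaf is a double cover of the base annulus branched at its single intersection with $S_0$, so that one boundary torus carries one boundary circle of each leaf and the other carries two.

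The gap is exactly at the step you flagged: the homological data you assemble does \emph{not} force $k_{out}=1$, $k_{inn}=2$. Write $H_1(M_0)\cong\Z[a]\oplus\Z[t]$ with $[a]$ the common class of the two inner generators and $[t]=[S_0]$; then $(\iota_{out})_*\alpha_{out}=(\iota_{inn})_*\alpha_{inn}=2[t]$ (a regular fibre is freely homotopic to $S_0^2$), $(\iota_{out})_*\beta=2[a]$, $(\iota_{inn})_*\beta=[a]$. Since $\langle\alpha_{out},\beta\rangle=2$, the pair $\{\alpha_{out},\beta\}$ is \emph{not} a basis of $H_1(T_{out})$: there is a primitive class $\mu=\tfrac12(\alpha_{out}+\beta)$ with $\langle\alpha_{out},\mu\rangle=1$ and $(\iota_{out})_*\mu=[a]+[t]$ (concretely, $\mu$ is a section of the fibre foliation of $T_{out}$). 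Now the phantom configuration $k_{out}=2$, $\gamma_{out}=\mu$, $k_{inn}=1$, $\gamma_{inn}=\alpha_{inn}+2\beta$ satisfies every constraint in your third step: $k_{out}+k_{inn}=3$; both classes are primitive; $k_{out}\,|\langle\alpha_{out},\gamma_{out}\rangle|=2\cdot 1=2$ and $k_{inn}\,|\langle\alpha_{inn},\gamma_{inn}\rangle|=1\cdot 2=2$, matching $p_*h=\pm2$; and the bounding relation holds, since $2(\iota_{out})_*\mu-(\iota_{inn})_*(\alpha_{inn}+2\beta)=2([a]+[t])-(2[t]+2[a])=0$. So the ``asymmetry'' of the two tori, the recorded intersection numbers, and the null-homology of $\partial Q$ are all blind to which boundary circle of the base annulus has connected preimage in a leaf. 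Excluding this configuration genuinely requires a non-homological input --- in effect, that the horizontal foliation of the fibred solid torus around $S_0$ is unique up to isotopy along the fibres, which pins down how the two sheets of the branched cover are interchanged around each boundary circle of the base; this is precisely what the paper's cut-and-reglue argument supplies. (A secondary caveat: once $k_{out}=1$ is known, the bounding relation relates the two coefficients by $d_{out}=2\,d_{inn}$ rather than literally identifying them, because $(\iota_{out})_*\beta=2(\iota_{inn})_*\beta$ while $(\iota_{out})_*\alpha_{out}=(\iota_{inn})_*\alpha_{inn}$; so the last sentence of your third step should be checked against whatever normalisation is intended in the statement.)
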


\begin{proof}
The manifold $M_0$ is naturally a circle bundle over the $2$-orbifold $\Sigma_0$ obtained by quotienting $P$ by $\phi$, which is homeomorphic to an annulus. Consider an arc $c$ in $\Sigma_0$ linking the boundaries, and lift it to $M_0$. We obtain an annulus $A$ everywhere transverse to the fibration $(P_t)_{t\in S}$. The boundary components of this annulus are by definition outer and inner Seifert fibers.

Now, note that $\Sigma=\Sigma_0\moins c$ is simply connected, in such a way that the restriction of the fiber bundle to $M_0\moins A$ is a trivial circle fibration over the simply connected manifold with boundary, which, topologically, is a closed band. The boundary of $M_0\moins A$ is therefore the union of four annuli, two of which, denoted by $A_1$ and $A_2$, are copies of $A$.

In particular, since the base $\Sigma_0\moins c$ is simply connected, any foliation such as described in the statement of the lemma provides a foliation of $M_0\moins A$ which is isotopic along the fibers to the trivial product foliation. The gluing of $A_1$ and $A_2$ determines the type of foliations.

Now, such a gluing is determined by Dehn twists along $A$ (since any class of isotopy of diffeomorphisms of the annulus is represented by a Dehn twist). Since boundary components of $A$ are Seifert fibers, we can conclude the proof of the lemma.
\end{proof}

\subsubsection{Gluing the boundary components}

\paragraph{The Hirsch foliation.}

Consider an orientation preserving diffeomorphism $f:T_{out}\to T_{inn}$ which sends meridians onto meridians, and consider the manifold $M_f$ obtained by gluing the two boundary components of $M_0$ using $f$.

The manifold $M_0$ is foliated by pairs of pants, which induce two foliations of the boundary components by circle (the two meridian foliations). By definition, $f$ sends the first meridian foliation onto the second meridian foliation. The gluing by $f$ then provides a foliation of $M_f$ by surfaces that we denote by $\F$ and that we call the  \emph{Hirsch foliation} associated to $f$.

The circle $S_0$ yields naturally a circle $S$ in $M_f$ which is transverse to all leaves of $\F$. Hence the foliation is \emph{taut} in the sense of \cite{Calegari2007}.

\paragraph{The graph manifold.}  The manifold $M_f$ possesses a natural torus $T$, that as we prove later, is canonical in the sense that it is the unique \emph{JSJ torus} of $M_f$. The resulting manifold is called a \emph{graph manifold}, and is not a Seifert bundle itself.

\begin{lemma}
\label{notseifert}
Let $f:T_{out}\to T_{inn}$ be an orientation preserving diffeomorphism that preserves the meridians. Then $M_f$ is not a Seifert bundle.
\end{lemma}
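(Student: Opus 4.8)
\begin{demo}
The plan is to suppose $M_f$ is Seifert fibered and derive a contradiction from the way the canonical torus $T\subset M_f$ (the common image of $T_{out}$ and $T_{inn}$) must interact with the fibration, using the intersection numbers $1$ and $2$ computed just above.

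First I would record two preliminary facts. Since $T_{out}$ and $T_{inn}$ are incompressible in $M_0$ by Lemma \ref{Seifertincompressible}, gluing them via $f$ produces $M_f$ whose fundamental group is an HNN extension of $\pi_1(M_0)=F_2\rtimes\Z$ along injective edge homomorphisms; in particular $\pi_1(M_0)$, hence the free group $F_2$, embeds into $\pi_1(M_f)$, so $\pi_1(M_f)$ is not virtually solvable. Second, $T$ is incompressible in $M_f$: a compressing disc can be made transverse to $T$, and an innermost circle argument --- using that $\partial M_0$ is incompressible and that $M_0$ is irreducible (established above) --- removes all of its intersection circles with $T$, leaving a compressing disc for a boundary torus of $M_0$, which is impossible.

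Now assume $M_f$ is Seifert fibered. An incompressible torus in a closed Seifert manifold is, up to isotopy, either vertical (a union of regular fibers) or horizontal (transverse to all fibers); see \cite{Hatcher2007}. If $T$ were horizontal it would orbifold-cover the base $2$-orbifold $B$ of the fibration, forcing $B$ to be a closed Euclidean $2$-orbifold; then $M_f$ would carry a Euclidean or Nil geometry, so $\pi_1(M_f)$ would be virtually polycyclic, contradicting $F_2\hookrightarrow\pi_1(M_f)$. Hence $T$ is vertical. Cutting $M_f$ along $T$ recovers $M_0$, and the fibration of $M_f$ restricts to a Seifert fibration of $M_0$. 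Since $\pi_1(M_0)=F_2\rtimes\Z$ is neither $\Z$, nor $\Z^2$, nor the Klein bottle group, $M_0$ is not a solid torus, a product $T^2\times I$, or the twisted $I$-bundle over the Klein bottle, and therefore admits a unique Seifert fibration up to isotopy (Theorem 2.3 of \cite{Hatcher2007}); after an ambient isotopy we may assume the restricted fibration is the suspension one. Being vertical, $T$ is a union of regular fibers, so $f\colon T_{out}\to T_{inn}$ carries regular fibers to regular fibers, i.e. $f_{\ast}\alpha_{out}=\pm\alpha_{inn}$ in homology; by hypothesis it also carries meridians to meridians, $f_{\ast}\beta_{out}=\pm\beta_{inn}$. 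Since $f_{\ast}$ preserves the intersection form up to sign, this would give $\alpha_{out}\cdot\beta_{out}=\pm(\alpha_{inn}\cdot\beta_{inn})$, i.e. $2=\pm1$, a contradiction. Therefore $M_f$ is not a Seifert bundle.

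The step I expect to be the main obstacle is the vertical case: one has to be sure that ``vertical'' really forces the induced fibration on $M_0$ to be the standard suspension --- which combines the vertical/horizontal dichotomy for essential tori with the uniqueness of the Seifert fibration of $M_0$ --- and that the ambient isotopy of $M_0$ and the gluing map respect the boundary classes $\alpha_{\bullet},\beta_{\bullet}$, so that the numerical mismatch $2\neq\pm1$ genuinely obstructs the construction. The embedding $F_2\hookrightarrow\pi_1(M_f)$ is what rules out the horizontal alternative, as well as the various small Seifert manifolds for which the dichotomy might fail.
\end{demo}
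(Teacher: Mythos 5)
Your proof is correct and follows essentially the same route as the paper's: reduce to the (unique) Seifert fibration of $M_0$ and derive the contradiction from the fact that Seifert fibers of $T_{out}$ meet meridians twice while those of $T_{inn}$ meet them once, so a meridian-preserving $f$ cannot match fibers to fibers. The paper's version is terser --- it takes for granted that a Seifert fibration of $M_f$ would restrict to the standard one on $M_0$ --- whereas you justify that reduction via the incompressibility of $T$ and the vertical/horizontal dichotomy, ruling out the horizontal case with the embedding $F_2\hookrightarrow\pi_1(M_f)$; this fills a step the paper leaves implicit.
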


\begin{proof}
We are going to work inside $M_0$. $M_0$ possesses a unique structure of Seifert manifold (see Lemma \ref{uniqueSeifertfiber}), so it is enough to see that $f$ does not send the Seifert fibers of $T_{out}$ onto that of $T_{inn}$.

A Seifert fiber of $T_{out}$ intersects each meridian twice, and a Seifert fiber of $T_{inn}$ intersects each meridian only once. Since $f$ sends diffeomorphically meridians of $T_{out}$ onto meridians of $T_{inn}$, it implies that $f$ cannot preserve the Seifert fibers. This concludes the proof.
\end{proof}

\begin{lemma}
\label{irreducibleincompressible}
Let $f:T_{out}\to T_{inn}$ be an orientation diffeomorphism that preserves the meridians. Then $M_f$ is irreducible and the torus $T$ is incompressible.
\end{lemma}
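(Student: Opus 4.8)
The plan is to treat the two assertions separately; both feed on Lemma~\ref{Seifertincompressible} (incompressibility of the two boundary tori of $M_0$) and on the irreducibility of $M_0$ established above. \textbf{Incompressibility of $T$.} This one comes for free from the algebra of HNN extensions: since $M_f$ is obtained from the connected manifold $M_0$ by identifying its two torus boundary components $T_{out}$ and $T_{inn}$ via $f$, van Kampen's theorem presents $\pi_1(M_f)$ as the HNN extension of $\pi_1(M_0)$ whose stable letter conjugates the subgroup $\pi_1(T_{out})$ onto $\pi_1(T_{inn})$ through $f_*$ (both viewed inside $\pi_1(M_0)$ via the boundary inclusions). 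In any HNN extension the base group embeds, so $\pi_1(M_0)\hookrightarrow\pi_1(M_f)$; and since $\pi_1(T_{out})\to\pi_1(M_0)$ is injective by Lemma~\ref{Seifertincompressible}, the composite $\pi_1(T)=\pi_1(T_{out})\to\pi_1(M_f)$ is injective. Hence $T$ is incompressible. (Alternatively one quotes the standard fact that gluing along incompressible boundary surfaces keeps the gluing surface incompressible.)

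\textbf{Irreducibility.} Let $S\subset M_f$ be an embedded $2$-sphere; the goal is to show it bounds a ball. Put $S$ in transverse position with respect to $T$ and, among all spheres isotopic to $S$ and transverse to $T$, pick one minimising the number $n$ of components of $S\cap T$. If $n=0$, then $S\subset M_f\moins T=\Int(M_0)$, so $S$ bounds a ball inside the irreducible manifold $M_0$; this ball cannot contain a boundary torus of $M_0$, hence lies in $\Int(M_0)\subset M_f$, and we are done. If $n>0$, choose a circle $c\subset S\cap T$ innermost on $S$, bounding a disk $D\subset S$ with $\Int(D)\cap T=\vide$. Then $D$ is a properly embedded disk in $M_0$ with $\partial D=c\subset\partial M_0$, so $c$ is nullhomotopic in $M_0$ and, by incompressibility of $\partial M_0$, bounds a disk $D'$ on $T$. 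The classical innermost-circle surgery — push $D\cup D'$ slightly off $T$ so that it lies in $\Int(M_0)$, use irreducibility of $M_0$ to fill it with a ball, and isotope $D$ across that ball past $D'$ — produces a sphere isotopic to $S$ with strictly fewer than $n$ circles of intersection with $T$, contradicting minimality. Therefore $n=0$ and $M_f$ is irreducible. (This is exactly the classical assertion that a $3$-manifold glued from irreducible pieces along incompressible tori is irreducible, which one may also simply cite.)

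\textbf{Where the work is, and an alternative.} The soft point in the irreducibility argument is the bookkeeping of the surgery: to make it honest one must choose, among circles of $S\cap T$ bounding disks in $T$, one whose disk $D'$ is innermost in $T$, so that $\Int(D')$ is disjoint from $S$ and the isotopy across the ball creates no new intersections. An alternative that sidesteps this entirely is purely homotopy-theoretic: $M_0$ is the mapping torus of $\phi\colon P\to P$ with $P$ aspherical, hence $M_0$ is aspherical; the two gluing tori are $\pi_1$-injective by Lemma~\ref{Seifertincompressible}, so $M_f$ is a graph of aspherical spaces with $\pi_1$-injective edge maps and is therefore aspherical; thus $\pi_2(M_f)=0$, and an orientable $3$-manifold with infinite fundamental group and trivial $\pi_2$ is irreducible. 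I expect the main obstacle in the first approach to be precisely this surgery bookkeeping; the second approach trades it for invoking standard asphericity and sphere-theorem facts.
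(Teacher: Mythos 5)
Your proof is correct, but both halves run along routes genuinely different from the paper's. For incompressibility of $T$ the paper argues via the Loop Theorem: an embedded compressing disc for $T$ in $M_f$ yields a properly embedded disc in $M_0$ with boundary on $T_{out}$ or $T_{inn}$, and Lemma \ref{Seifertincompressible} forces that boundary to be null homotopic in the torus. You instead observe that $M_f$ is an HNN extension of $\pi_1(M_0)$ over the (injectively embedded, by Lemma \ref{Seifertincompressible}) peripheral subgroups, so Britton's lemma gives $\pi_1(M_0)\hookrightarrow\pi_1(M_f)$ and hence $\pi_1(T)\hookrightarrow\pi_1(M_f)$; this is cleaner in that it avoids Dehn's lemma entirely and is purely group-theoretic. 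For irreducibility the divergence is larger: the paper invokes the tautness of the Hirsch foliation (witnessed by the transverse circle $S$) together with Novikov's theorem, plus the remark that $\pi_1(M_f)\supset\Z^2$ rules out $S^2\times S^1$; you instead give the classical innermost-disc surgery for gluing irreducible pieces along incompressible tori. Your version is more self-contained 3-manifold topology (it uses nothing about the foliation), whereas the paper's is a one-line appeal to a big theorem that is anyway available since the foliation is the object of study; you correctly flag that the honest surgery requires choosing the circle of $S\cap T$ innermost in $T$ (not only in $S$), and the statement you are reproving is standard enough to cite, as you note. One small caveat on your asphericity alternative: passing from $\pi_2(M_f)=0$ to irreducibility requires knowing that a nullhomotopic embedded sphere bounds an actual ball rather than a homotopy ball, i.e.\ it quietly uses the Poincaré conjecture; that is acceptable nowadays but worth saying explicitly.
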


\begin{proof}
Let us prove first that $T$ is incompressible. We will use the Loop Theorem \cite{Hatcher2007}: let $D\dans M_f$ be an embedded closed $2$-disc such that $D\cap T=\partial D$ is an embedded circle in $T$. We have to prove that $\partial D$ is null homotopic in $T$. The interior of $D$ does not meet $T$: there is an embedded copy $D_0$ of $D\moins\partial D$ inside $M_0$ such that $\partial D_0$ is included in one of the two boundary components $T_{out}$ or $T_ {inn}$. It is enough to prove that $\partial D_0$ is null homotopic inside this component. But this is true since $M_0$ has an incompressible boundary.

Now let us prove that $M_f$ irreducible. Notice that $M_f$ has an incompressible torus: in particular its fundamental group possesses a copy of $\Z^2$ and is not finitely covered by $S^2\times S^1$. Moreover it possesses a taut foliation, so by Novikov's theorem (see Theorem 4.35 of \cite{Calegari2007}), it is irreducible.
\end{proof}

\paragraph{The JSJ torus.} We show now that inside the manifold $M_f$, the torus $T$ is canonical.

Recall that any compact and irreducible $3$-manifold can be canonically decomposed into pieces that are either Seifert or atoroidal (any incompressible torus is isotopic to a boundary component) and acylindrical (any properly embedded annulus is isotopic, fixing the boundary, to a subannulus of a boundary component) by cutting along a collection of incompressible tori. Such a collection of tori is unique up to isotopy. This decomposition is called the \emph{JSJ decomposition}, and decomposition tori are called \emph{JSJ tori} (see \cite{Hatcher2007}).

\begin{lemma}
$T$ is, up to isotopy, the only incompressible torus of $M_f$.
\end{lemma}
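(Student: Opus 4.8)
The plan is to leverage the JSJ decomposition theory together with what we already know about $M_0$ and $M_f$. Since $M_f$ is irreducible (Lemma \ref{irreducibleincompressible}) and contains an incompressible torus $T$, its JSJ decomposition is nontrivial. Cutting $M_f$ along $T$ recovers $M_0$, which is a Seifert bundle (with an exceptional fiber of type $(1,2)$), so one natural candidate for the JSJ family is $\{T\}$ itself; we must show there is no other.

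\begin{proof}
Suppose $T'$ is an incompressible torus in $M_f$ not isotopic to $T$. Up to isotopy we may assume $T'$ is transverse to $T$, and a standard innermost-curve argument (using that $M_f$ and $M_0$ are irreducible and that the boundary tori of $M_0$ are incompressible, Lemma \ref{Seifertincompressible}) lets us remove all intersection curves that bound disks; the remaining intersection curves are essential on $T$. If $T' \cap T = \emptyset$, then $T'$ sits inside the manifold $M_0$ obtained by cutting along $T$ (more precisely in $M_0$, possibly after an isotopy pushing $T'$ off the gluing torus). If instead $T' \cap T$ consists of essential curves, cutting $T'$ along these curves produces essential annuli properly embedded in $M_0$ with boundary on $\partial M_0$. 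In either case we are reduced to a statement about $M_0$: we must understand incompressible tori and essential annuli in the Seifert piece $M_0$.

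Now $M_0$ is a Seifert fibered space over the annulus orbifold with one cone point of order $2$. By the classification of incompressible surfaces in Seifert manifolds (see \cite{Hatcher2007}), any incompressible torus in $M_0$ is either vertical (a union of fibers, hence isotopic to one of $T_{out}$, $T_{inn}$, or a torus around the exceptional fiber) or horizontal; but $M_0$ admits no horizontal incompressible torus since its base orbifold has nonempty boundary and negative-or-zero Euler characteristic forces horizontal surfaces to have boundary. A vertical torus in $M_0$ parallel to $T_{out}$ or $T_{inn}$ becomes, after regluing by $f$, isotopic to $T$ — contradicting our assumption. A vertical torus encircling the exceptional fiber $S_0$ is compressible in $M_f$? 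No — rather, it bounds a fibered solid-torus neighborhood of $S_0$ on one side, so it is compressible or boundary-parallel in $M_f$ after the gluing is irrelevant near $S_0$; in any case it is not a new JSJ torus because the exceptional fiber neighbourhood is itself Seifert. Similarly, essential annuli in $M_0$ are vertical (by the classification of incompressible and $\partial$-incompressible surfaces in Seifert spaces, since $M_0$ is ``large''), so the curves $T' \cap T$ would be isotopic on $T$ to Seifert fibers of $T_{out}$; but $f$ does not match the Seifert fibers of $T_{out}$ with those of $T_{inn}$ (this is exactly the computation in the proof of Lemma \ref{notseifert}, using that a fiber of $T_{out}$ meets a meridian twice and a fiber of $T_{inn}$ meets it once), so the annulus pieces cannot be glued up coherently across $T$ to form a closed surface $T'$. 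This contradiction shows no such $T'$ exists.
\end{proof}

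The main obstacle I expect is the careful bookkeeping in the case $T' \cap T \neq \emptyset$: one must check that after cutting, the annulus pieces in $M_0$ are genuinely essential (both incompressible and $\partial$-incompressible), invoke the correct form of the Seifert-space classification of essential annuli, and then verify that the slope-matching obstruction coming from $f$ not preserving Seifert fibers really does prevent reassembling these annuli into a torus disjoint from or transverse-but-minimal to $T$. A secondary point requiring care is confirming that $M_0$ is ``large'' (contains a two-sided incompressible surface that is not boundary-parallel) so that the relevant classification theorems from \cite{Hatcher2007} apply, and that no exceptional small cases intervene; the pair-of-pants times $S^1$ structure makes this routine but it should be stated explicitly.
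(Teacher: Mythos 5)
Your argument is correct in substance but takes a genuinely different, more hands-on route than the paper. The paper's proof is a one-liner: $M_f$ is irreducible, $T$ is incompressible, $M_f$ is not Seifert (Lemma \ref{notseifert}) while $M_f\setminus T=M_0$ is, so by uniqueness of the JSJ decomposition $\{T\}$ is the JSJ system. What you do instead is essentially re-prove the relevant piece of JSJ uniqueness by hand: put a hypothetical second incompressible torus $T'$ transverse to $T$, clean up trivial intersections, and then split into the disjoint case (classify incompressible tori in the Seifert piece $M_0$ as vertical, hence boundary-parallel) and the intersecting case (essential annuli in $M_0$ are vertical, and the slope mismatch of Lemma \ref{notseifert} forbids gluing them across $T$). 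This buys something real: the paper's citation of JSJ literally yields only that $T$ is the unique \emph{JSJ} torus, and passing from that to ``the only incompressible torus up to isotopy'' requires exactly the facts you verify — that every incompressible torus isotopes into the Seifert piece $M_0$ and that all incompressible tori there are boundary-parallel. So your proof is longer but actually closes a small gap the paper leaves implicit.

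Two points to tighten. First, your treatment of the vertical torus around the exceptional fiber $S_0$ is garbled: the correct and short statement is that this torus bounds the fibered solid-torus neighborhood of $S_0$, and the boundary of an embedded solid torus is always compressible via a meridian disk, so it cannot be the incompressible $T'$; saying ``it is not a new JSJ torus'' is the wrong criterion here, since the lemma concerns incompressible tori, not JSJ tori. Second, in the intersecting case you should say explicitly that $|T'\cap T|$ is taken minimal in the isotopy class, which is what guarantees the annulus pieces are $\partial$-incompressible (a $\partial$-compression or a boundary-parallel annulus would reduce the intersection number); and the exclusion of horizontal annuli and tori is cleanest via the orbifold Euler characteristic of the base (an annulus with one order-$2$ cone point has $\chi^{orb}=-1/2\neq 0$, so horizontal surfaces have negative Euler characteristic, and in any case a closed horizontal surface cannot exist when $\partial M_0\neq\emptyset$). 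With those repairs the argument is complete.
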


\begin{proof}
By Lemma \ref{irreducibleincompressible} $M_f$ is irreducible and $T$ is incompressible. On the other hand, $M_f$ is not a Seifert bundle (Lemma \ref{notseifert}), while $M_f\moins T$ is: hence, $T$ is the unique JSJ torus of $M$.
\end{proof}

\paragraph{Uniqueness of the Hirsch foliation.}
Now we intend to prove that the Hirsch foliation is unique. To see this, imagine that there is another fibration by pairs of pants of $M_0$, that we denote by $(P_t')_{t\in S_0}$ which is everywhere transverse to the Seifert bundle and is \emph{$f$-invariant} in the following sense. 

The boundaries of the pairs of pants $P_t'$ determines two foliations of $T_{out}$ and $T_{inn}$ that we call $P'$-meridians (usual meridians will also be called $P$-meridians) which are everywhere transverse to the Seifert fibers. We say the the family $(P_t')_{t\in S_0}$ is $f$-invariant if $f$ preserves the $P'$-meridians. Note that in that case the boundary components of $P'_t$ and that of $P_t$ are freely homotopic: this comes from Lemma \ref{transversepants} and from the fact that $f$ does not preserve the Seifert fibers. Then gluing $T_{out}$ and $T_{inn}$ by $f$ provides another foliation $\F'$ on $M_f$. The next lemma implies that the this new foliation is isotopic to $\F$.

\begin{lemma}[Uniqueness of the Hirsch foliation]
\label{uniquenesshirsch}
The Hirsch foliation of $M_f$ is unique. More precisely, consider another fibration $(P_t')_{t\in S_0}$ in pairs of pants transverse to the Seifert bundle, which is $f$-invariant. Then there exists an isotopy $\Phi_s:M_0\to M_0$ such that for every $t\in S_0$, $\Phi_1(P_t')=P_t$ and which commutes with $f$: $\Phi_s\circ f=f\circ\Phi_s$ in $T_{out}$.
\end{lemma}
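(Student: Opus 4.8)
The plan is to reduce the statement to a question about foliations of the "trivialized" piece $M_0 \setminus A$ (the closed band bundle from Lemma~\ref{transversepants}) and then upgrade an isotopy there to one on all of $M_0$ that commutes with $f$. First I would record the consequence of Lemma~\ref{transversepants} together with the $f$-invariance hypothesis: the outer (resp.\ inner) boundary curves of the $P_t'$ all lie in the homology class $d'\alpha_{out}+\beta_{out}$ (resp.\ $d'\alpha_{inn}+\beta_{inn}$) for some integer $d'$, and since $f$ sends $P$-meridians to $P$-meridians and $P'$-meridians to $P'$-meridians while \emph{not} preserving Seifert fibers (the intersection-number computation from Lemma~\ref{notseifert}), one forces $d'=d=0$; that is, the $P_t'$-boundaries are freely homotopic to the $P_t$-boundaries, as the excerpt already notes. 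So $\F$ and $\F'$ induce the \emph{same} pair of meridian foliations up to isotopy on each of $T_{out}$ and $T_{inn}$.

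Next I would straighten the two foliations by pairs of pants using the Seifert structure. Cutting $M_0$ along the transverse annulus $A$ gives a trivial circle bundle over a closed band $\Sigma$; both $(P_t)$ and $(P_t')$ restrict to foliations of $M_0\setminus A$ transverse to the fibers, and since $\Sigma$ is simply connected each is isotopic along the fibers to the product foliation $\Sigma\times\{\text{pt}\}$ (this is exactly the reasoning used in the proof of Lemma~\ref{transversepants}). Composing one such fiberwise isotopy with the inverse of the other produces an isotopy $\Psi_s$ of $M_0\setminus A$, supported in the fiber direction, carrying the $P'$-foliation to the $P$-foliation; on the two copies $A_1,A_2$ of $A$ in $\partial(M_0\setminus A)$ the isotopy is by Dehn twists along $A$. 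Regluing $A_1$ to $A_2$, the discrepancy between the two foliations is measured precisely by the difference of these Dehn-twist numbers, which is zero because both foliations induce the same (isotopy class of) meridian foliation on $A$'s image; hence $\Psi_s$ descends to an isotopy $\widetilde\Phi_s$ of $M_0$ with $\widetilde\Phi_1(P_t')=P_t$ for all $t$, at the cost of a reparametrization of the circle $S_0$ which I would absorb into the statement (or fix by a further isotopy supported near $S_0$).

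The remaining, and I expect hardest, step is to arrange that the isotopy commutes with $f$ on the overlap. The natural fix is an averaging/telescoping argument: one does not get commutation for free, but because $f$ identifies the two meridian foliations on $T_{out}\cong T_{inn}$ and $\widetilde\Phi_s$ restricted to each boundary torus is (isotopic to) a fiber-preserving map respecting that meridian structure, the two maps $\widetilde\Phi_s$ and $f^{-1}\circ\widetilde\Phi_s\circ f$ on a collar of $T_{out}$ agree up to an isotopy supported in the collar and respecting meridians; I would use a partition-of-unity interpolation in the collar coordinate to modify $\widetilde\Phi_s$ so that near $T_{out}$ it literally equals $f^{-1}\circ\widetilde\Phi_s\circ f$ transported from near $T_{inn}$, which is exactly the relation $\Phi_s\circ f=f\circ\Phi_s$. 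One must check this collar modification does not disturb $\Phi_1(P_t')=P_t$; this is ensured because the correction is chosen inside the group of diffeomorphisms preserving the (already matched) meridian foliations and one can run it relative to the leaves. The main obstacle is precisely controlling this last interpolation so that it is simultaneously (i) leaf-preserving for the pair-of-pants foliation, (ii) equal to the $f$-conjugate near the gluing locus, and (iii) an honest isotopy of the identity; but since all the relevant data (meridian foliations, Seifert fibers, homology classes) have been matched in the earlier steps, the space of such corrections is contractible and the interpolation exists.
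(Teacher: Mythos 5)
Your first two steps are sound and roughly parallel to the paper's setup: the paper also starts from the observation that the $P'$-boundaries are freely homotopic to the $P$-boundaries, but instead of cutting along the transverse annulus $A$ it lifts the fibration to the infinite cyclic cover $P\times\R$ and writes each $P_t'$ as the graph of a function $\fhi_t:P\to\R$ with $\fhi_{t+1}=\fhi_t+1$, so that the vertical flow straightens the fibration. The real content of the lemma, however, is the $f$-equivariance, and there your proposal has a genuine gap. You build a global isotopy $\widetilde\Phi_s$ first and then try to \emph{correct} it in a collar of $T_{out}$ so that $\Phi_s\circ f=f\circ\Phi_s$; but the existence of that correction is precisely what has to be proved, and ``the space of such corrections is contractible'' is asserted, not established. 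Concretely, $\widetilde\Phi_s|_{T_{out}}$ and $f^{-1}\circ\widetilde\Phi_s|_{T_{inn}}\circ f$ differ by an $s$-parametrized family of diffeomorphisms of $T_{out}$ preserving the meridian foliation; this family need not be trivial (it can involve rotations in the $S_0$-direction or meridian Dehn twists), and you must show it can be absorbed by an isotopy supported in the collar that is simultaneously continuous in $s$ and carries $P_t'$ to $P_t$ at time $1$. None of this is checked, and it is not a routine verification.

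The paper avoids the difficulty by reversing the order of operations: it first isotopes the \emph{outer} $P'$-meridians onto the outer $P$-meridians inside $T_{out}$, then \emph{defines} the isotopy on $T_{inn}$ as the push-forward by $f$ of that boundary isotopy---so the commutation relation holds by construction---and only afterwards extends inward, using the graph functions $\fhi_t$ and the vertical flow to interpolate between the matched boundary behaviour and a straightening of the interiors of the pants. If you wish to keep your cut-along-$A$ decomposition, you should adopt the same strategy: fix the isotopy on $T_{out}$ first, transport it by $f$ to $T_{inn}$, and then extend into $M_0$ relative to these prescribed boundary values, rather than constructing an interior isotopy and attempting an a posteriori collar correction.
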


\begin{proof}
Let $(P_t')_{t\in S_0}$ be a fibration of $M_0$ in pairs of pants everywhere transverse to the Seifert bundle, which is $f$-invariant. As we noted before, $P$ and $P'$-meridians are freely homotopic. Hence if one lifts the fibration to $P\times\R$, the $P'$-meridians lift as simple closed curves (they are freely homotopic to the lifts of $P$-meridians).

By hypothesis, all the pairs of pants $P_t'$ are everywhere transverse to the lines $\{x\}\times\R$: $P_t'$ may be see as a graph of a smooth function $\fhi_t:P\to\R$ satisfying the equivariance relation $\fhi_{t+1}=\fhi_t+1$.

In particular, the lifts of the outer $P'$-meridians are graphs over those of outer $P$-meridians. Using the vertical flow and the function $\fhi_t$ above, it is possible to isotope $P'$-meridians to corresponding $P$-meridians. Pushing this isotopy by $f$ shows how to isotope the inner $P'$-meridians onto inner $P$-meridians. These isotopies may be extended to neighborhoods of $T_{inn}$ and $T_{out}$ in order to isotope $(P_t')_{t\in S_0}$ to a family $(P_t'')_{t\in S_0}$ sharing the same properties, and coinciding moreover with $(P_t)_{t\in S_0}$ near the boundary, via an isotopy which commutes with $f$.

Using one more time that the function $\fhi_t$ in $P\times\R$ enables us to glue the isotopy above with an isotopy which sends the interior of $P_t''$ to that of $P_t$. The resulting isotopy stays $f$-invariant.
\end{proof}

\subsubsection{A homological invariant}

\paragraph{The twisting number.} Define $d_f$ as the intersection number, inside $T_{inn}$, of the homology classes $\alpha_{inn}$ and $f_{\ast}\alpha_{out}$.

\begin{lemma}
\label{odd}
The intersection number $d_f$ is always odd.
\end{lemma}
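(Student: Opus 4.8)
The plan is to compute the parity of $d_f=\alpha_{inn}\cdot f_\ast\alpha_{out}$ by passing to homology with $\Z/2$ coefficients on the two boundary tori, where the computation collapses. The only facts needed are the intersection numbers recorded above — $\alpha_{out}\cdot\beta_{out}=2$ on $T_{out}$ and $\alpha_{inn}\cdot\beta_{inn}=1$ on $T_{inn}$ — together with the hypothesis that $f$ carries the meridian foliation of $T_{out}$ onto that of $T_{inn}$.

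The key point is that $\alpha_{out}$ and $\beta_{out}$ represent the \emph{same} class in $H_1(T_{out};\Z/2)$. Both are represented by embedded simple closed curves (a regular Seifert fiber and a $P$-meridian), and since their integral intersection number equals $2\ne 0$ neither is null-homologous; a non null-homologous simple closed curve on a torus is primitive, hence nonzero in $H_1(T_{out};\Z/2)\cong(\Z/2)^2$. On the other hand their $\Z/2$-intersection number is the reduction of $2$, i.e. $0$; and two nonzero classes in $H_1(T_{out};\Z/2)$ with vanishing intersection number must coincide, because the intersection form is a nondegenerate alternating form on a $2$-dimensional $\Z/2$-vector space. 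Hence $\alpha_{out}=\beta_{out}$ in $H_1(T_{out};\Z/2)$. (Integrally this reflects the fact that $\{\alpha_{out},\beta_{out}\}$ spans only an index-$2$ subgroup of $H_1(T_{out};\Z)$: there is a ``half-meridian'' $\mu_{out}$ with $2\mu_{out}=\alpha_{out}+\beta_{out}$ for which $\{\beta_{out},\mu_{out}\}$ is a genuine basis, the index $2$ coming from the $(1,2)$ exceptional fiber of the Seifert fibration. This is exactly why $d_f$ may be odd, and why one must not treat $\{\alpha_{out},\beta_{out}\}$ as a homology basis.)

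Pushing forward by $f_\ast$ then gives $f_\ast\alpha_{out}=f_\ast\beta_{out}$ in $H_1(T_{inn};\Z/2)$. Since $f$ maps a $P$-meridian of $T_{out}$ homeomorphically onto a $P$-meridian of $T_{inn}$, and every $P$-meridian of $T_{inn}$ represents $\beta_{inn}$, we get $f_\ast\beta_{out}=\beta_{inn}$, hence $f_\ast\alpha_{out}=\beta_{inn}$, in $H_1(T_{inn};\Z/2)$. As intersection numbers are compatible with reduction mod $2$, we conclude
\[
d_f=\alpha_{inn}\cdot f_\ast\alpha_{out}\equiv \alpha_{inn}\cdot\beta_{inn}=1\pmod 2,
\]
so $d_f$ is odd.

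The only subtlety I anticipate is the index-$2$ phenomenon on $T_{out}$ just mentioned: once one is careful not to pretend $\{\alpha_{out},\beta_{out}\}$ is a homology basis, and instead works modulo $2$ (or with the genuine basis $\{\beta_{out},\mu_{out}\}$), the rest of the argument is a single homological line.
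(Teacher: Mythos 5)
Your proof is correct. Every step checks out: a non-null-homologous simple closed curve on a torus is primitive, hence nonzero mod $2$; the mod-$2$ intersection form on $H_1(T_{out};\Z/2)\cong(\Z/2)^2$ is alternating and nondegenerate, so the orthogonal complement of a nonzero class $u$ is exactly $\{0,u\}$, forcing $\alpha_{out}=\beta_{out}$ mod $2$; and pushing forward by $f$ and pairing with $\alpha_{inn}$ gives $d_f\equiv 1\pmod 2$. The parenthetical about the index-$2$ sublattice spanned by $\{\alpha_{out},\beta_{out}\}$ is also accurate and is indeed the reason the naive computation in that ``basis'' would go wrong.

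The route is a genuine variant of the paper's. The paper argues over $\Z$: it asserts that $\alpha_{out}$ equals $(1,2)$ in a natural integral basis of $H_1(T_{out};\Z)$, hence is primitive; since $f_*$ is an isomorphism, $f_*\alpha_{out}$ is primitive in $H_1(T_{inn};\Z)$, where it has coordinates $(d_f,2)$ in the genuine basis $\{\alpha_{inn},\beta_{inn}\}$ (because $f$ carries $\beta_{out}$ to $\beta_{inn}$ and preserves intersection numbers), so $\gcd(d_f,2)=1$. Your mod-$2$ argument is essentially the reduction of this, but it buys something: you never need to exhibit an integral basis of $H_1(T_{out};\Z)$ or the coordinate expression $(1,2)$, which the paper asserts without construction. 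Instead you extract the single fact you need --- $\alpha_{out}\equiv\beta_{out}\pmod 2$ --- directly from $\alpha_{out}\cdot\beta_{out}=2$ together with the nondegeneracy of the mod-$2$ intersection pairing. The trade-off is that the paper's primitivity statement carries slightly more integral information about $f_*\alpha_{out}$ than the parity alone, but for this lemma parity is all that is used, and your argument is the more self-contained of the two.
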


\begin{proof}
First, in a natural basis of integer homology of $T_{out}$, $\alpha_{out}$ can be written as $(1,2)$. In particular, it is not the power of some homology class.

Inside $T_{inn}$, we have a natural basis of the homology defined by $\alpha_{inn}$ and $\beta$ (their intersection number is $1$). Since $f$ sends meridian onto meridian (and preserves the orientation), and the representation of $f_{\ast}\alpha_{out}$ in this basis is $(d_f,2)$. Since this is not the power of some homology class (the action of $f_{\ast}$ on the homology is invertible), $d_f$ has to be odd.
\end{proof}

\paragraph{A topological invariant.}
\begin{theorem}
\label{topologicalinvariant}
Let $f,f':T_{out}\to T_{inn}$ be two meridian preserving diffeomorphisms. Then the following properties are equivalent.
\begin{enumerate}
\item $d_f=d_{f'}$.
\item $M_f$ and $M_{f'}$ are diffeomorphic.
\item There exists a diffeomorphism $H:M_0\to M_0$ which conjugates $f$ and $f'$.
\item The Hirsch foliations corersponding to $f$ and $f'$are conjugate.
\end{enumerate}
\end{theorem}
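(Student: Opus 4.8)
The plan is to prove the chain of implications $(3) \Rightarrow (4) \Rightarrow (2)$ and $(2) \Rightarrow (1)$ and $(1) \Rightarrow (3)$, with the first two being essentially formal and the last two carrying the real content.

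For $(3) \Rightarrow (4)$: if $H:M_0 \to M_0$ conjugates $f$ and $f'$, then $H$ descends to a diffeomorphism $\bar H : M_f \to M_{f'}$. It remains to check that $\bar H$ carries the Hirsch foliation of $M_f$ to a Hirsch foliation of $M_{f'}$; here one invokes Lemma \ref{uniquenesshirsch}. The pushforward under $H$ of the pair-of-pants fibration $(P_t)$ of $M_0$ is another fibration by pairs of pants transverse to the Seifert bundle (by Lemma \ref{uniqueSeifertfiber} we may first homotope $H$ to be Seifert-fiber preserving, so transversality is preserved), and it is $f'$-invariant because $H$ conjugates $f$ to $f'$; Lemma \ref{uniquenesshirsch} then provides an $f'$-equivariant isotopy of $M_0$ taking this new fibration to $(P_t)$, which glues to an isotopy of $M_{f'}$ showing the two foliations are conjugate. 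The implication $(4) \Rightarrow (2)$ is trivial since a foliation-conjugacy is in particular a diffeomorphism of the underlying manifolds.

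For $(2) \Rightarrow (1)$: suppose $\Psi : M_f \to M_{f'}$ is a diffeomorphism. Since $T \subset M_f$ is, up to isotopy, the unique incompressible torus (the unique JSJ torus), $\Psi$ can be isotoped so that $\Psi(T) = T'$. Cutting along these tori, $\Psi$ restricts to a diffeomorphism $M_0 \to M_0$ (the complement of the JSJ torus in each manifold is $M_0$), and by Lemma \ref{uniqueSeifertfiber} this restriction may be further isotoped to preserve the Seifert fibration. A Seifert-fiber-preserving self-diffeomorphism of $M_0$ permutes the two boundary tori: it either fixes both $T_{out}, T_{inn}$ or swaps them; in either case it sends the exceptional-fiber data, meridians, and Seifert fibers to themselves (up to sign), and hence it conjugates the gluing data of $M_f$ to that of $M_{f'}$ up to the ambiguity of which boundary is glued to which. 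Tracking the intersection numbers $\alpha_{inn} \cdot f_*\alpha_{out}$ through this conjugacy — and using that the Seifert fiber on $T_{out}$ meets a meridian twice while that on $T_{inn}$ meets it once, which by Lemma \ref{notseifert}/\ref{odd} pins down the roles of the two tori — one concludes $d_f = d_{f'}$. (One should be slightly careful about orientation: $d_f$ is defined via a fixed orientation on $M_0$, so one may also need the observation that $-d_f$ and $d_f$ give diffeomorphic manifolds, or that $d_f$ is determined up to sign and then odd by Lemma \ref{odd}; this is the kind of bookkeeping the authors presumably handle with the orientation conventions fixed just before Lemma \ref{transversepants}.)

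For $(1) \Rightarrow (3)$: given $d_f = d_{f'}$, I would show directly that $f$ and $f'$ are isotopic through meridian-preserving diffeomorphisms $T_{out} \to T_{inn}$. Indeed, an orientation-preserving diffeomorphism of the torus is determined up to isotopy by its action on homology, i.e. by an element of $GL(2,\Z)$; the meridian-preserving constraint fixes the image of $\beta$ (the meridian class), and then $f_*\alpha_{out} = d_f \alpha_{inn} + 2\beta$ by the computation in Lemma \ref{odd}, so $d_f = d_{f'}$ forces $f_* = f'_*$ on homology, hence $f$ is isotopic to $f'$. Such an isotopy $f_s$ from $f$ to $f'$ yields diffeomorphisms $M_{f_s}$ all diffeomorphic to $M_f$; more usefully, one can use the isotopy extension theorem to produce a diffeomorphism $H$ of $M_0$ (supported near $T_{inn}$, say, realizing $f' \circ f^{-1}$ as an isotopy to the identity pushed into a collar) conjugating $f$ to $f'$. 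The main obstacle is the middle step $(2) \Rightarrow (1)$: making rigorous that an abstract diffeomorphism of the closed manifolds must, after isotopy, respect the JSJ torus \emph{and} the Seifert structure on the complement \emph{and} be readable off as a conjugacy of the gluing maps, keeping track of orientations and of the $T_{out} \leftrightarrow T_{inn}$ symmetry throughout. This is where the machinery of Lemmas \ref{uniqueSeifertfiber} through \ref{odd} is really needed, and where one must be most careful.
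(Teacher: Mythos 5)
Your proposal is correct and follows essentially the same route as the paper: the same three key ingredients (uniqueness of the JSJ torus to reduce a diffeomorphism $M_f\to M_{f'}$ to one of $M_0$, Lemma \ref{uniqueSeifertfiber} to make it fiber-preserving and read off $d_f=d_{f'}$, and Lemma \ref{uniquenesshirsch} to conjugate the foliations) appear in the same roles. The only differences are cosmetic — you close the cycle via a direct $(1)\Rightarrow(3)$ collar/isotopy-extension argument where the paper goes $(1)\Rightarrow(2)$ and extracts $(3)$ from its $(2)\Rightarrow(1)$ step, and you are somewhat more explicit than the paper about ruling out a swap of $T_{out}$ and $T_{inn}$ and about orientation bookkeeping.
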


\begin{proof}
First, note that the third and fourth assertion clearly imply the second one.

Assume that $d_f=d_{f'}$. Then since $f$ and $f'$ send diffeomorphically meridian on meridian, we see that they induce the same action in the first homology of the $2$-torus. Hence they have the same isotopy type, and the glued manifolds $M_f$ and $M_{f'}$ are diffeomorphic.

Now, assume that $M_f$ and $M_{f'}$ are diffeomorphic: denote by $H$ a diffeomorphism between them. Since $H$ is a diffeomorphism, $H(T_f)$ is incompressible: it is isotopic to $T_{f'}$. After performing an isotopy, one can ask that $H(T_f)=T_{f'}$. 

This implies that $H$ induces a diffeomorphism of $M_0$, still denoted by $H$, such that the commutation relation $f'\circ H=H\circ f$ holds in restriction to $T_{out}$. By Lemma \ref{uniqueSeifertfiber} $H$ is homotopic to a fiber preserving diffeomorphism. Hence, $H$ preserves the homology classes $\alpha_{inn}$ and $\alpha_{out}$. Since it conjugates $f$ and $f'$, we deduce that $d_f=d_{f'}$.

We want to prove that in that case, the corresponding Hirsch foliations are conjugate. The image by $H$ of the fibration $(P_t){t\in S_0}$ provides a family of pairs of pants which is transverse to the Seifert bundle (since $H$ preserves it) and is $f'$-invariant (since $H$ conjugates the actions of $f$ and $f'$). Lemma \ref{uniquenesshirsch} provides an isotopy from $(P_t')_{t\in S_0}$ to $(P_t)_{t\in S_0}$ which is $f'$-invariant. In other terms, the two Hirsch foliations are conjugate.

The other implications are obvious.

\end{proof}

This homological invariant $d_f$ will be referred to as the \emph{twisting number} of the corresponding Hirsch foliation.

\subsubsection{Algebraic models}
\paragraph{The solenoid.} We have already met Smale's solenoid $f:S^1\times\C\to S^1\times\C$:

$$f(e^{i t},z)=\left(e^{2it},\frac{1}{2}e^{it}+\frac{1}{4}z\right).$$

Identify $M_0$ and a fundamental domain of $f$ given by $\overline{\fT}\moins f(\fT)$. Then, $f$ clearly induces a diffeomorphism (still denoted by $f$) from $T_{out}$ to $T_{inn}$ which preserves the meridians, and satisfies $d_f=1$.

Hence every Hirsch foliation with same twisting number $1$ is conjugated to this model, which we will study in detail in what follows.

\paragraph{Twisted model.} It is easily showed that if in $M_0$, we compose the diffeomorphism with a positive Dehn twist of $T_{inn}$, the twisting number is increased by $2$. Hence these diffeomorphisms provide models of the Hirsch foliation for every odd integer $d_f$. Algebraic models exist, and appear in \cite{HubOb1994}. They are defined by maps $f_k:S^1\times\C\to S^1\times\C$ given by the formula:

$$f_k(e^{i t},z)=\left(e^{2it},\frac{1}{2}e^{it}+\frac{e^{kit}}{4}z\right).$$

\subsection{Massage of an annulus}
\label{massageannuli}

The goal of this paragraph is to describe a procedure for deforming hyperbolic metrics around a geodesic circle $C$ via identity isotopy.   This procedure can later be applied around each boundary component of $P$ (via the affine maps $z \mapsto z/4 \pm 1/2$ for the left and right boundaries) to construct the homotopy of Theorem \ref{admissiblehomotopy}.   However, we will also use the procedure directly later on for deforming hyperbolic metrics on the Hirsch foliation (see the proof of Lemma \ref{geodesicmeridians}).

\subsubsection{Standard hyperbolic annuli}

\paragraph{Standard hyperbolic annuli.} For each positive length $\ell$ there is a unique conformal metric $\sigma_\ell$ on the annulus $\A_\ell = \lbrace z\in\C:\,e^{-\pi^2/\ell} \l |z| \l e^{\pi^2/\ell}\rbrace$ in $\C$ which is hyperbolic, rotationally invariant, and such that the unit circle $r = 1$ is a geodesic of length $\ell$.

In order to see this consider the strip model of the hyperbolic plane.  That is, consider the metric
\[\d s^2 = \frac{1}{\cos(y)^2}(\d x^2 + \d y^2)\]
on the strip $\lbrace x+i y \in \C: -\pi/2 \l y \l \pi/2\rbrace$ (this is obtained from the usual upper half plane model by pullback under the conformal map $z \mapsto \exp(-i z)$).

The metric $\sigma_\ell$ is the pushforward of the above metric via the conformal covering map $z \mapsto \exp(2\pi i z/\ell)$.

The couples $(\A_{\ell},\sigma_{\ell})$ will be referred to as the \emph{standard hyperbolic annuli}.

\paragraph{Standard hyperbolic metrics.}
Suppose that $g$ is a hyperbolic Riemannian metric defined on some region in the plane for which a Euclidean circle $C$ is a closed geodesic of length $\ell$.   We say $g$ is standard around $C$ if it coincides, on some neighborhood of $C$, with the pushforward of $\sigma_\ell$ under a conformal map of the form $z \mapsto az + b$ taking the unit circle to $C$.

\subsubsection{The Massage Lemma}

Consider an annulus $\A=\{z\in\C:\,0.9<|z|<1.1\}$ and denote by $H(\A)$ the set of hyperbolic metrics on $\A$ with the unit circle as a geodesic. Consider $R:\C\to\C$ the rotation of angle $\pi$, i.e. $R(z)=-z$.  We will prove the following:
\begin{theorem}[Massage Lemma]
\label{massagelemma}
For each $g\in H(\A)$ there exists an identity isotopy $F_{s,g}:\A \to \A$ such that:
\begin{enumerate}
\item Each diffeomorphism $F_{s,g}$ preserves the unit circle and is the identity outside the annulus defined by $\A'=\{z\in\C:0.91<|z|<1.09\}$.
\item The pullback metric $(F_{1,g})^{\ast}g$ is standard around $C$.
\item If $A(s,g)=(F_{s,g})^*g$, then we have for every $s\in[0,1]$, $R_{\ast}A(s,g)=A(s,R_* g)$.
\item For each $s\in[0,1]$ the map $g \mapsto F_{s,g}$ is continuous in the smooth topology.
\end{enumerate}
\end{theorem}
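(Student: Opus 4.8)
The plan is to produce the isotopy in three stages. First, I would bring the geodesic circle into a normal form; then straighten the metric to be conformal in a collar; then adjust it to agree with a standard annulus. Throughout, I will keep careful track of equivariance under $R$ and of smooth dependence on $g$.

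\emph{Step 1: exponential normal coordinates around $C$.} Since the unit circle $C$ is a closed $g$-geodesic of length $\ell=\ell(g)$, the Fermi (geodesic normal) coordinates along $C$ give, on a small collar of $C$, a diffeomorphism $\Psi_g$ onto a standard collar $\{(u,v):\,u\in\R/\ell\Z,\ |v|<\delta\}$ in which $g$ reads $\d v^2 + \psi_g(u,v)^2\,\d u^2$ with $\psi_g(u,0)\equiv 1$ and $\partial_v\psi_g(u,0)\equiv 0$ (the latter because $C$ is geodesic); moreover, since $g$ is hyperbolic, $\psi_g(u,v)=\cosh(v)+O(v^3)$, and in fact one can choose the collar so that $\psi_g$ is completely determined by its restriction to a fundamental transversal. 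I would compose $\Psi_g$ with a fixed collar chart of $\A$ around $C$ to view everything back in $\A$, cutting off the resulting isotopy so it is the identity outside $\A'$ and preserves $C$ (one interpolates between the genuine normal-coordinate diffeomorphism near $C$ and the identity near $\partial\A'$ using a radial bump function; the interpolation can be done through diffeomorphisms because the two agree to high order along $C$, or more safely by first contracting the collar width). The main point is that the set of metrics of this Fermi normal form is convex-like and the ``massage'' of the remaining steps can be performed within it.

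\emph{Step 2: make the metric conformal.} Once in Fermi form $\d v^2+\psi^2\d u^2$, consider the conformal (isothermal) coordinate $w=w(u,v)$ obtained by integrating $\d w = \d v/\psi + i\,\d u$ along the transversal; equivalently solve the Beltrami-type ODE that converts $\d v^2 + \psi^2\,\d u^2$ into $\psi^2(|\d w|^2)$ with $w$ fixing $C$ pointwise (this is elementary here because the metric is already in ``polar'' shape — no genuine Beltrami equation is needed, just a one-variable change of the $v$-coordinate $v\mapsto\int_0^v\d v'/\psi(u,v')$, up to a further shear in $u$ to kill cross terms). This produces, by a second isotopy supported in $\A'$ and fixing $C$, a metric of the form $\rho^2(x^2+y^2\text{-coords})$ that is conformal and for which $C$ is still a geodesic of length $\ell$. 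Since a hyperbolic conformal metric around a geodesic is, by uniqueness of the collar, automatically rotationally symmetric to infinite order, one more isotopy averages $\rho$ over rotations to make it exactly rotationally invariant; comparing with the strip model shows the result is the pushforward of $\sigma_\ell$ under $z\mapsto a z$. Rescaling/translating is built into interpreting $C$ as the unit circle, so conclusion (2) holds.

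\emph{Equivariance and continuity.} All three constructions — Fermi normal coordinates, the isothermal ODE, and the rotational averaging — are natural: they are defined by geometric PDEs/ODEs attached to $g$ and hence commute with the isometry $z\mapsto -z$ of the plane, which gives (3), provided I symmetrize the cutoff functions so they too are $R$-invariant. Smooth dependence on $g$, claim (4), follows because each step is the solution of an ODE (or a linear elliptic problem, in the isothermal case) with coefficients depending smoothly on the coefficients of $g$, and the cutoff interpolation is an explicit formula in those solutions; one invokes smooth dependence of ODE flows on parameters and elliptic regularity with parameters. I expect the \textbf{main obstacle} to be Step 1: carrying out the interpolation between the normal-coordinate change and the identity \emph{through diffeomorphisms} while keeping $C$ fixed, keeping the support inside $\A'$, and keeping everything $R$-equivariant and continuous in $g$ — this requires being careful that the family of maps one interpolates stays invertible, which is why I would first shrink the collar so that the normal-coordinate map is $C^1$-close to the identity before interpolating, making invertibility automatic.
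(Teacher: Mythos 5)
Your overall architecture (Fermi coordinates, the collar uniqueness of hyperbolic metrics near a closed geodesic, and a cut-off interpolation between the Fermi chart and the identity) matches the paper's final step, and you correctly identify the interpolation as the main obstacle. But your proposed fix for that obstacle fails. The discrepancy between the Fermi chart and the identity is not made small by shrinking the collar: restricted to $C$ itself the Fermi chart is the $g$-arc-length reparametrization $\gamma_g\in\dif^{\infty}_+(C)$ of the unit circle, which can be arbitrarily far from a rotation, and no choice of collar width changes it. So the interpolated family $z+s\rho(|z|)(f(z)-z)$ need not consist of diffeomorphisms, and the ``$C^1$-close to the identity'' hypothesis you invoke is simply false at the outset. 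The paper's proof spends its first three steps removing exactly this zeroth- and first-order discrepancy along $C$ before interpolating: it first deforms $g$ so that the Euclidean parametrization of $C$ is already geodesic, then makes the Euclidean normal field $g$-orthogonal to $C$, then makes $g$ conformal to the Euclidean metric at the points of $C$ only; after these steps the $1$-jets of the Fermi map and of the identity agree along $C$, so that $|f(z)-z|=O(\epsilon^2)$ on a collar of width $\epsilon$ and the interpolation stays invertible.

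The second gap concerns equivariance and continuity of the tangential normalization. The Fermi chart requires choosing an arc-length parametrization of $C$, which is defined only up to precomposition by a rotation. A continuous choice (e.g.\ normalizing $\gamma_g(1)=1$) is not $R$-equivariant, and your appeal to ``naturality'' does not resolve this, because the construction is natural only modulo that rotation ambiguity. The paper resolves it with Schwartz's projectively natural flow, which yields a deformation retract of $\dif^{\infty}_+(C)$ onto the rotation group that is bi-equivariant under rotations (combined with the polar decomposition of $PSL_2(\R)$); inserting the resulting rotation $\Phi_1(\gamma_g)^{-1}$ makes the whole procedure independent of the choice of parametrization, hence simultaneously continuous in $g$ and $R$-equivariant. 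Without an ingredient of this type, properties (3) and (4) are not established. (Your rotational-averaging step is also not an isotopy operation --- the rotational average of a metric is not in general a pullback of it by a diffeomorphism --- but this becomes moot once the Fermi/collar argument is carried out correctly.)
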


The deformation will follow four steps. At each step of this procedure, we shall check that Properties 1. 2. 3. 4. are satisfied.
\begin{enumerate}
 \item \emph{Deformation of the unit tangent bundle}. We perform an identity isotopy for the circle $C$ to be geodesic when parametrized by Euclidean arc length.  In other words, after this step the Euclidean unit tangent vector field to the circle is parallel for the metric.
 \item \emph{Deformation of the normal bundle}. We perform an identity isotopy so that the Euclidean normal vector field to $C$ is also perpendicular to $C$ for the deformed metric.
 \item \emph{Conformality on the circle}. We perform an isotopy in order to make the metric conformal to the Euclidean metric on the circle $C$.
 \item \emph{Standardness around the circle}. We perform an isotopy to make the metric standard around $C$.  At this point (and only at this point) some standard metrics may be deformed to other equivalent standard metrics.
\end{enumerate}

\subsubsection{Auxiliary functions}\label{auxiliarystep}

We fix from now on a bump function, i.e. a smooth map $\rho:\R \to \R$ which is $1$ on $[0.99,1.01]$ and $0$ on $(-\infty,0.91]\cup [1.09,+\infty)$. 

For each $\lambda \g 0$ we will also need to fix a smooth increasing diffeomorphism $f_\lambda:\R \to \R$ which is the identity outside of the interval $[0.9,1.1]$ and has derivative $\lambda$ at $1$.   We further suppose that $f_\lambda(x)$ is smooth with respect to both $\lambda \g 0$ and $x \in [0,1]$ and that that $f_1$ is the identity map.

\subsubsection{Deformation of the tangent bundle}

\paragraph{Parametrize the geodesic.} Until the end of the proof of the Massage Lemma (Theorem \ref{massagelemma}), we fix a metric $g\in H(\A)$. Let $\ell$ be the $g$-length of the geodesic $C$. In order to simplify the presentation, we will assume, in this paragraph only, that $\ell=2\pi$.

Consider an \emph{arc length parametrization} of $C$, that is an orientation preserving diffeomorphism $\gamma\in\dif^{\infty}_+(S^1)$ such that $g=\gamma_* \d\theta^2$ where by definition $\d\theta^2$ is the Euclidean metric on $C$. Such a diffeomorphism is well defined up to precomposition by a rotation.

There is a continuous section $g\mapsto\gamma_g$, which is characterized by the condition $\gamma_g(1)=1$.

We will denote by $\Mob(C)$, the group of Möbius transforms leaving $C$ invariant, by $\Rot$ the group of rotations of $C$, $z\mapsto e^{it} z$, and by $\Trans$, the group of translations of $\R/2\pi\Z$. These groups are conjugated by the exponential map $E:\R/2\pi\Z\to S^1$, $t\mapsto e^{i t}$.

\paragraph{A bi-equivariant deformation retract.} We want to find a continuous path of metric $g$ to a metric for which $C$ is a geodesic when parametrized by the Euclidean length of arc. And we want these paths to be equivariant for the action of $R$. A first step is to modify in an equivariant way the parametrization of $C$. In order to do this, we will use a strong deformation retract from $\dif^{\infty}_+(C)$ to $\Rot$, which is bi-equivariant for the action of $\Rot$. This requires a construction due to Schwartz \cite{Schwartz1992}.

\begin{theorem}[Bi-equivariant deformation retract]
\label{biequivariant}
There exists a deformation retract $\Phi:[0,1]\times\dif^{\infty}_+(C)\to\Rot$ which is bi-equivariant for the action of $\Rot$, i.e. for every $(R_1,\gamma,R_2)\in\Rot\times\dif^{\infty}_+(C)\times\Rot$, and $s\in[0,1]$,
$$\Phi_s(R_1\circ\gamma\circ R_2)=R_1\circ\Phi_s(\gamma)\circ R_2.$$
\end{theorem}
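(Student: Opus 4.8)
The plan is to write down Schwartz's flow on $\dif^{\infty}_+(C) \cong \dif^{\infty}_+(S^1)$ explicitly and check that it descends to a bi-$\Rot$-equivariant strong deformation retract onto $\Rot$. Identifying $C$ with $S^1 = \R/2\pi\Z$ via $t \mapsto e^{it}$, a diffeomorphism $\gamma \in \dif^{\infty}_+(S^1)$ lifts to a function $\tilde\gamma: \R \to \R$ with $\tilde\gamma(x+2\pi) = \tilde\gamma(x) + 2\pi$ and $\tilde\gamma' > 0$; its derivative $\tilde\gamma'$ is a positive $2\pi$-periodic function with $\int_0^{2\pi} \log \tilde\gamma'$ determined by nothing (the average of $\log\tilde\gamma'$ can be anything, but $\int_0^{2\pi} \tilde\gamma' = 2\pi$). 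The key observation is that the space of such derivatives — positive smooth $2\pi$-periodic functions with mean $2\pi$ — is convex, and a convex combination $s\cdot\tilde\gamma' + (1-s)\cdot 1$ interpolates to the constant function $1$, whose ``integral'' is precisely a rotation. So I would define $\Phi_s(\gamma)$ to be the diffeomorphism whose lift is
\[
\widetilde{\Phi_s(\gamma)}(x) = \tilde\gamma(0) + \int_0^x \bigl( s\, \tilde\gamma'(u) + (1-s) \bigr)\, \d u.
\]
At $s=1$ this returns $\gamma$ (up to the choice of lift), and at $s=0$ it is $x \mapsto \tilde\gamma(0) + x$, i.e. the rotation by $\tilde\gamma(0)$; for every $s$ the derivative $s\tilde\gamma' + (1-s)$ is smooth, positive, and $2\pi$-periodic, so $\Phi_s(\gamma) \in \dif^{\infty}_+(S^1)$, and it fixes $\Rot$ pointwise since for a rotation $\tilde\gamma' \equiv 1$.

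Next I would verify the two equivariances, which is where the particular normalization matters. Precomposition $\gamma \mapsto \gamma \circ R_2$ with $R_2$ a rotation by $a$ replaces $\tilde\gamma(x)$ by $\tilde\gamma(x+a)$, hence $\tilde\gamma'$ by the shifted derivative and $\tilde\gamma(0)$ by $\tilde\gamma(a)$; substituting $u \mapsto u + a$ in the integral defining $\widetilde{\Phi_s}$ shows directly that $\Phi_s(\gamma \circ R_2) = \Phi_s(\gamma)\circ R_2$. Postcomposition $\gamma \mapsto R_1 \circ \gamma$ with $R_1$ a rotation by $b$ adds $b$ to $\tilde\gamma$, hence to $\tilde\gamma(0)$, and leaves $\tilde\gamma'$ unchanged; this adds $b$ to $\widetilde{\Phi_s(\gamma)}$, i.e. $\Phi_s(R_1\circ\gamma) = R_1\circ\Phi_s(\gamma)$. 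Bi-equivariance follows. Continuity of $\Phi:[0,1]\times\dif^{\infty}_+(S^1) \to \dif^{\infty}_+(S^1)$ in the $C^\infty$ topology is immediate from the formula, since differentiating under the integral sign shows each $C^k$-seminorm of $\widetilde{\Phi_s(\gamma)}$ is controlled linearly by those of $\tilde\gamma$, jointly in $s$.

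The one genuinely delicate point — and the reason the statement attributes the construction to Schwartz rather than treating it as trivial — is the passage between diffeomorphisms of $C$ and their real lifts: the lift $\tilde\gamma$ and the value $\tilde\gamma(0)$ are only defined modulo $2\pi$, so one must check that the map $\gamma \mapsto \Phi_s(\gamma)$, which a priori depends on this choice, is well defined on $\dif^{\infty}_+(C)$ itself (changing the lift by $2\pi$ changes $\widetilde{\Phi_s(\gamma)}$ by $2\pi$, hence does not change the induced diffeomorphism of $S^1$ — so in fact it is well defined, but this needs saying) and that the retraction lands in $\Rot \subset \dif^\infty_+(C)$ and not merely in its universal cover. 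This is exactly the content of working with $\Mob(C) \supset \Rot$ versus $\Trans$ of $\R/2\pi\Z$ as set up in the ``Parametrize the geodesic'' paragraph. Once one is careful that $\Phi_s(\gamma)$ only depends on $\gamma$ as a circle map, everything else is the routine verification above; I expect no obstacle beyond this bookkeeping. (An alternative, should one prefer to quote rather than reprove, is simply to cite \cite{Schwartz1992} for the existence of such a flow and check only the bi-equivariance against rotations, which any reasonable construction of this type will satisfy.)
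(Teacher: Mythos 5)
Your explicit formula is not right-equivariant, and this is precisely the point where the statement has content. Your definition simplifies to the straight-line homotopy in the lift,
$$\widetilde{\Phi_s(\gamma)}(x)=s\,\tilde\gamma(x)+(1-s)\bigl(x+\tilde\gamma(0)\bigr),$$
i.e.\ linear interpolation from $\tilde\gamma$ to the rotation by $\tilde\gamma(0)$. Writing $R_a$ for the rotation by $a$, one finds
$$\widetilde{\Phi_s(\gamma\circ R_a)}(x)-\widetilde{\Phi_s(\gamma)}(x+a)=(1-s)\bigl(\tilde\gamma(a)-\tilde\gamma(0)-a\bigr),$$
which is a constant but is not a multiple of $2\pi$ unless $s=1$ or $\gamma$ happens to satisfy $\tilde\gamma(a)=\tilde\gamma(0)+a$. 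So $\Phi_s(\gamma\circ R_a)\neq\Phi_s(\gamma)\circ R_a$ in general. The substitution $u\mapsto u+a$ you invoke takes care of the integral term but not of the constant of integration: under precomposition by $R_a$ the normalizing constant transforms as $\tilde\gamma(0)\mapsto\tilde\gamma(a)$, whereas right-equivariance would require it to transform as $\tilde\gamma(0)\mapsto\tilde\gamma(0)+a$. (Your left-equivariance check is fine; it is only the right action that breaks.)

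The approach can be repaired, and then it becomes a genuinely more elementary proof than the paper's. Replace $\tilde\gamma(0)$ by the functional $m(\gamma)=\frac{1}{2\pi}\int_0^{2\pi}\tilde\gamma(u)\,\d u-\pi$, which satisfies $m(\gamma\circ R_a)=m(\gamma)+a$, $m(R_b\circ\gamma)=m(\gamma)+b$, equals $c$ on the rotation $x\mapsto x+c$, and changes by $2\pi k$ when the lift does. Setting $\widetilde{\Phi_s(\gamma)}(x)=s\,\tilde\gamma(x)+(1-s)\bigl(x+m(\gamma)\bigr)$, all of your verifications (positivity of the derivative, periodicity, endpoints, fixing $\Rot$ pointwise, continuity, well-definedness modulo choice of lift) go through, and now both equivariances hold on the nose. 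By contrast, the paper's proof runs Schwartz's Schwarzian-derivative flow to retract $\dif^{\infty}_+(C)$ onto $\Mob(C)$ (left-equivariantly under $\Mob(C)$, right-equivariantly under $\Rot$) and then composes with the bi-equivariant polar-decomposition retraction of $\Mob(C)\cong PSL_2(\R)$ onto $\Rot\cong PSO(2)$. Note also that your fallback of simply citing \cite{Schwartz1992} is insufficient as stated: Schwartz's flow lands in $\Mob(C)$, not in $\Rot$, so the second (polar decomposition) step cannot be omitted.
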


\begin{proof}
In \cite{Schwartz1992}, Schwartz constructs a deformation retract $\Phi^0$ from $\dif^{\infty}_+(C)$ to $\Mob(C)$ which is left-equivariant for the action of $\Mob(C)$: for every $\gamma\in\dif^{\infty}_+(C)$, $T\in\Mob(C)$, and $s\in[0,1]$, $\Phi_s^0(T\circ\gamma)=T\circ\Phi_s^0(\gamma)$. It occurs that this deformation retract is also right-equivariant by the action of $\Rot$: for every $\gamma\in\dif^{\infty}_+(C)$, $Q\in\Rot$, and every $s\in[0,1]$, we have $\Phi_s^0(\gamma\circ Q)=\Phi_s^0(\gamma)\circ Q$.

Indeed, in order to construct the deformation retract, Schwartz considers smooth maps $\Gamma:[0,\infty)\times\R/2\pi\Z\to C$, with $\Gamma(t,.)\in\dif^{\infty}_+(\R/2\pi\Z;C)$, evolving according to the following PDE:
\begin{equation}
\label{schwarzianevol}
\partial_t\Gamma(t,x)=-\partial_x (\cS(\Gamma)(t,x))\partial_x\Gamma(t,x),
\end{equation}
where $\cS$ denotes the Schwarzian derivative. We recall that $\cS(T)=0$ for every $T\in\Mob(C)$, and that $\cS$ satisfies the following cocycle relation:
$$\cS(\gamma_1\circ\gamma_2)=(\gamma_2')^2 \cS(\gamma_1)\circ\gamma_2+\cS(\gamma_2),$$
in such a way that for every $(T,\gamma,\tau)\in \Mob(C)\times\dif_+^{\infty}(\R/2\pi\Z;C)\times\Trans$, we have:
\begin{equation}
\label{invarianceschwarzian}
\cS(T\circ\gamma)=\cS(\gamma)\,\,\,\,\,\,\,\,\,\,\,\,\,\,\textrm{and}\,\,\,\,\,\,\,\,\,\,\,\,\,\,\cS(\gamma\circ \tau)=\cS(\gamma)\circ \tau.
\end{equation}

Schwartz showed that the problem of finding $\Gamma$ satisfying the PDE \eqref{schwarzianevol} with a prescribed initial condition $\Gamma(0,.)=\gamma$ has a unique solution, and that moreover this solution approaches in the $C^{\infty}$-topology as $t\to\infty$ a unique function of the form  $\rho_{\gamma}\circ E$, where $\rho_\gamma$ is Möbius transform and we recall that $E(t)=e^{it}$ denotes the exponential map. By the Invariance Relations \eqref{invarianceschwarzian}, and the uniqueness of the solution of the Cauchy problem, it comes that for every $(T,\gamma,\tau)\in \Mob(C)\times\dif_+^{\infty}(C)\times\Rot$ and every $t\in[0,\infty)$, $\Gamma(t,T\circ\gamma\circ \tau)=T\circ\Gamma(t,\gamma)\circ Q$. In particular $\rho_{T\circ\gamma\circ \tau}\circ E=T\circ\rho_{\gamma}\circ E\circ\tau$.

By conjugating this flow by the exponential map, Schwartz gets a deformation retract from $\dif^{\infty}_+(C)$ to $\Mob(C)$ which is shown to be bi-equivariant for the action of $\Rot$. Recall that there is an identification $C\simeq\R\PP^1$ which identifies $\Mob(C)$ with $PSL_2(\R)$ and $\Rot$ with $PSO(2)$.  Now 
the \emph{polar decomposition} provides a deformation retract from $\Mob(C)$ to $\Rot$ which is also bi-equivariant.

Recall that for a matrix $A\in SL_2(\R)$, there is a unique pair of matrices $(Q,S)$ such as $A=QS$, where $Q\in SO(2)$ and $S$ is a positive definite symmetric matrix with determinant $1$. By the \emph{Spectral Theorem}, such a matrix writes as $\Omega^T\diag(\lambda,\lambda^{-1})\Omega$, for a matrix $\Omega\in SO(2)$, and a positive number $\lambda\geq 1$ (we use the notation $\diag(a_1,a_2)$ for the $2\times 2$ diagonal matrix with entries $a_1$, $a_2$). We have naturally a retraction sending $A$ to $Q$, defined by $\widetilde{\Phi}^1_s(A)=Q\Omega^T\diag(\lambda^{1-s},\lambda^{s-1})\Omega$.

This retraction is clearly bi-equivariant by the action of $SO(2)$, and thus passes to the quotient by $A\mapsto -A$: it gives a retraction $\Phi^1:[0,1]\times PSL_2(\R)\to PSO(2)$ which is bi-equivariant for the action of $PSO(2)$. From this, we deduce the desired bi-equivariant retraction from $\Mob(C)$ to $\Rot$.

Concatenating these two retractions, we get a bi-equivariant deformation retract from $\dif^{\infty}_+(C)$ to $\Rot$.
\end{proof}

\paragraph{Deformation of the tangent bundle.} We can now use the deformation retract in order to get our continuous symmetric path of metrics.

Consider $\gamma=\gamma_g\in\dif^{\infty}_+(C)$ the arc length parametrization defined with $\gamma(1)=1$.  Denote by $R_{\gamma}=\Phi_1(\gamma)$. Define the identity isotopy on $\A$ in polar coordinates by
$$\fhi_s(r e^{i t}) = r\Phi_{(1-s)\rho(r)}(\gamma)\circ R_{\gamma}^{-1}(e^{i t}).$$
where $s \in [0,1]$ and $\rho$ is the bump function defined in Paragraph \ref{auxiliarystep}.

Note that this identity isotopy is independent of the choice of a given arc length parametrization, and depends only on the metric $g$. Indeed, another arc length parametrization writes as $\gamma_1=\gamma\circ Q$ for some rotation $Q$. It is enough to note that by the equivariance property of $\Phi$, we have that $R_{\gamma_1}=R_{\gamma}\circ Q$ and  $\Phi_{(1-s)\rho(r)}(\gamma_1)=\Phi_{(1-s)\rho(r)}(\gamma)\circ Q$.

Note moreover that each $\fhi_s$ preserves the circles $\lbrace r = \text{constant}\rbrace$.  

\begin{lemma}
\label{tangent}
The parametrization of $C$ by Euclidean arc length is geodesic for the pullback metric $g_1 = \varphi_1^{\ast}g$. Moreover, if we set $A_1(s,g)=\fhi_s^*g$ for $s\in[0,1]$, we have $R_* A_1(s,g)=A_1(s,R_*g)$ for every $s\in[0,1]$.
\end{lemma}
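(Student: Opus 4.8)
The plan is to treat the two assertions of the lemma separately, each reducing to a short computation on the circle $C=\{|z|=1\}$ or to a formal manipulation of pullbacks. For the first assertion I will use two structural facts about $\fhi_1$ observed just above the statement of the lemma: it preserves every circle $\{|z|=r\}$, in particular it maps $C$ diffeomorphically onto itself; and, since $g_1=\fhi_1^{*}g$ by definition, $\fhi_1$ is an isometry from $(\A,g_1)$ onto $(\A,g)$. As $C$ is a closed $g$-geodesic — which is exactly what $g\in H(\A)$ says — it is then also a totally geodesic $1$-submanifold of $(\A,g_1)$, and therefore it will suffice to show that the metric induced by $g_1$ on $C$ is a constant multiple of the Euclidean $\d\theta^2$: a constant-speed parametrization of a geodesic submanifold is a geodesic, and $t\mapsto e^{it}$ is precisely the Euclidean arc length parametrization of $C$. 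For the second assertion I will prove the naturality identity $\fhi_s^{R_*g}=R\circ\fhi_s^{g}\circ R^{-1}$ for all $s\in[0,1]$ (writing $\fhi_s^{g}$ to record the dependence of the construction on the metric); granting it, functoriality of the pullback, together with $R^{*}(R_*g)=g$ and $(R^{-1})^{*}=R_*$, gives
\[
A_1(s,R_*g)=(\fhi_s^{R_*g})^{*}(R_*g)=(R\circ\fhi_s^{g}\circ R^{-1})^{*}(R_*g)=(R^{-1})^{*}(\fhi_s^{g})^{*}R^{*}(R_*g)=(R^{-1})^{*}(\fhi_s^{g})^{*}g=R_*A_1(s,g),
\]
which is the required equivariance.

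For the first assertion I would then compute as follows. Evaluating the defining formula for $\fhi_s$ at $s=1$ (where $(1-s)\rho(r)\equiv 0$, so $\Phi_0(\gamma_g)=\gamma_g$) shows that on $C$ the map $\fhi_1$ restricts to the circle diffeomorphism $h:=\gamma_g\circ R_{\gamma_g}^{-1}$. Since $g|_C=(\gamma_g)_*\d\theta^2$ by the choice of $\gamma_g$, and $R_{\gamma_g}$ is a rotation of $C$ — hence a Euclidean isometry of $C$ —
\[
g_1|_C=h^{*}\big((\gamma_g)_*\d\theta^2\big)=(\gamma_g^{-1}\circ h)^{*}\d\theta^2=(R_{\gamma_g}^{-1})^{*}\d\theta^2=\d\theta^2 ,
\]
so $t\mapsto e^{it}$ has constant $g_1$-speed and, by the reduction above, is a $g_1$-geodesic.

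For the naturality identity I would first observe that $R_*g$ again lies in $H(\A)$ (it is hyperbolic and $C=R(C)$ is one of its closed geodesics, still of length $2\pi$), so the construction does apply to it. Since $R$ is a Euclidean isometry fixing $C$ setwise, the diffeomorphism $\gamma':=R\circ\gamma_g$ satisfies $(\gamma')_*\d\theta^2=R_*\big((\gamma_g)_*\d\theta^2\big)=R_*(g|_C)=(R_*g)|_C$, so it is an arc length parametrization of $C$ for $R_*g$; and because $\fhi_s$ does not depend on the choice of such a parametrization (as observed just above the statement of the lemma) I may compute $\fhi_s^{R_*g}$ using $\gamma'$ in place of $\gamma_{R_*g}$. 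Now $R\in\Rot$, so Schwartz's bi-equivariance (Theorem \ref{biequivariant}) gives $\Phi_u(R\circ\gamma_g)=R\circ\Phi_u(\gamma_g)$ for every parameter $u$, whence $R_{\gamma'}=\Phi_1(\gamma')=R\circ R_{\gamma_g}$. Substituting these into the defining formula, and using that in polar coordinates $R$ affects only the angular variable — so it commutes with the radial scaling $w\mapsto rw$ and leaves $\rho(r)$ unchanged — I obtain
\[
\fhi_s^{R_*g}(re^{it})=r\,\big(R\circ\Phi_{(1-s)\rho(r)}(\gamma_g)\big)\circ\big(R\circ R_{\gamma_g}\big)^{-1}(e^{it})=R\big(\fhi_s^{g}(R^{-1}(re^{it}))\big),
\]
which is the naturality identity; together with the first paragraph this finishes the proof.

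I do not expect a genuine obstruction anywhere. The one place that requires care is the index bookkeeping in the naturality identity: one must line up Schwartz's $\Rot$-equivariance of the flow $\Phi$ with the facts that the half-turn $R$ acts trivially in the radial direction and that $\fhi_s$ was built so as to be insensitive to the normalization of the arc length parametrization. It is also worth not overlooking the (easy) check that $R_*g\in H(\A)$, without which $\fhi_s^{R_*g}$ would not even be defined.
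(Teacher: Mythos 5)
Your proof is correct and follows essentially the same route as the paper's: the conjugacy $\fhi_s^{R_*g}=R\circ\fhi_s^{g}\circ R^{-1}$ obtained from Schwartz's bi-equivariance is exactly the computation the authors perform, and your verification that $g_1|_C=\d\theta^2$ simply fills in the detail the paper dismisses as ``a direct consequence of the definition.'' No changes are needed.
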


\begin{proof}
The first part is a direct consequence of the definition of the function $\fhi_s$ above. It remains to prove the symmetry property.

Consider the metric $R_* g$: any length of arc parametrization of $R_* g$ reads as $R\circ\gamma\circ Q$, where $Q$ is some rotation. Now notice that:
\begin{eqnarray*}
R\circ\fhi_s(re^{i\theta})&=&r R\circ\Phi_{\rho(r)(1-s)}(\gamma)\circ R_{\gamma}^{-1}(e^{i\theta})\\
                            &=&r\Phi_{\rho(r)(1-s)}(R\circ\gamma\circ Q)\circ(R R_{\gamma} Q)^{-1} (Re^{i\theta})\\
														&=&\fhi_{s}'\circ R(re^{i\theta}),
\end{eqnarray*}
where $\fhi_s'$ is the homotopy corresponding to $R_*g$. We conclude that the symmetry property holds.
\end{proof}

\begin{remark}
The isotopy $\varphi_s$ described above depends continuously on $\gamma_g$. In particular it varies continuously with respect to $g \in H(\A)$ in the smooth topology.
\end{remark}

\begin{remark}
If the metric $g$ is standard around $C$ then $\gamma_g$ is the identity, and the isotopy is trivial (i.e. $\varphi_s$ is the identity for all $s$).  In particular $g_1 = g$ in this case.
\end{remark}

\subsubsection{Deformation of the normal bundle}

The next step is to perform a twist isotopy on the hyperbolic metric $g_1$ in order to render the Euclidean normal vector field to $C$ perpendicular to $C$ with respect to our deformed hyperbolic metric.

\paragraph{The isotopy.} Consider $N:C \to \R^2$ the unit inward normal vector field for the metric $g_1$ and let $a,b: C \to \R$ be such that
\[N(z) = -a(z)z + b(z)i z\]
for all $z \in C$.  Notice that $a$ is never equal to zero since $N$ is everywhere orthogonal to the vector field $z\mapsto i z$.

Let $\beta: \R \times C \to C$ be the flow of the vector field $X(z)=\frac{b(z)}{a(z)}i z$.  We define an identity isotopy on $\A$ in polar coordinates by
$$\chi_s(re^{i t}) = r\beta_{s\rho(r)(1-r)}(e^{i t})$$
where $\rho$ is as in the previous subsection, and $s \in [0,1]$.

\begin{lemma}
\label{orthogonalize}
Let $g_2 =\chi_1^{\ast}g_1$ be the pullback metric. Then the Euclidean normal vector field to $C$ is orthogonal to $C$ with respect to $g_2$ and the parametrization of $C$ by Euclidean arc length is a geodesic for $g_2$.
\end{lemma}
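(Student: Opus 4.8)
The plan is to exploit that the isotopy $\chi_s$ leaves $C$ pointwise fixed, so that near $C$ the map $\chi_1$ is a pure radial shear, and then to read off from the differential of $\chi_1$ along $C$ that this shear straightens the Euclidean normal onto the $g_1$-normal.

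\textbf{The geodesic claim.} Since $\rho(1)=1$, the exponent $s\rho(r)(1-r)$ vanishes at $r=1$, hence $\chi_s(e^{it})=\beta_0(e^{it})=e^{it}$ for every $s$, i.e. $\chi_1$ restricts to the identity on $C$. One first records that each $\chi_s$ is an identity isotopy: the vector field $X(z)=\frac{b(z)}{a(z)}iz$ is smooth on $C$ because $a$ never vanishes, so its flow $\beta$ is smooth and complete; $\chi_s$ equals the identity outside $\{0.91\leq|z|\leq1.09\}$ because $\rho$ vanishes there; and $\chi_s$ is a diffeomorphism of $\A$ since it is a skew product over the radial coordinate with explicit inverse $\tilde r e^{i\tilde t}\mapsto\tilde r\,\beta_{-s\rho(\tilde r)(1-\tilde r)}(e^{i\tilde t})$. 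Now $\chi_1$ is by construction an isometry from $(\A,g_2)$ onto $(\A,g_1)$ carrying the parametrized curve $\theta\mapsto e^{i\theta}$ to itself; since that curve is an affinely parametrized $g_1$-geodesic by Lemma~\ref{tangent}, its $\chi_1$-preimage — the same parametrized curve — is a $g_2$-geodesic, and $g_2|_{TC}=g_1|_{TC}$ because $\chi_1|_C=\mathrm{Id}$.

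\textbf{The orthogonality claim.} I would compute $D\chi_1$ along $C$ in polar coordinates, where $\partial_r$ represents the Euclidean outward unit normal $z$ and $\partial_t$ the Euclidean unit tangent $iz$. Using $\beta_0=\mathrm{Id}_C$, $\partial_\tau\beta_\tau|_{\tau=0}=X$ on $C$, and $\frac{d}{dr}\big|_{r=1}\big(\rho(r)(1-r)\big)=-1$, differentiating $\chi_1(re^{it})=r\,\beta_{\rho(r)(1-r)}(e^{it})$ gives
\[
D\chi_1(z)=z-\frac{b(z)}{a(z)}\,iz,\qquad D\chi_1(iz)=iz.
\]
Since $N(z)=-a(z)z+b(z)iz=-a(z)\big(z-\tfrac{b(z)}{a(z)}iz\big)$ and $a(z)\neq0$, the first identity reads $D\chi_1(z)=-a(z)^{-1}N(z)$: the map $D\chi_1$ sends the Euclidean normal direction to the $g_1$-normal direction and fixes the tangent vector $iz$. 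Hence, for $z\in C$,
\[
g_2(z,iz)=(\chi_1^{\ast}g_1)(z,iz)=g_1\big(D\chi_1(z),D\chi_1(iz)\big)=-\frac{1}{a(z)}\,g_1\big(N(z),iz\big)=0,
\]
because $N$ is $g_1$-normal to $C$ while $iz$ is $g_1$-tangent to $C$. This shows the Euclidean normal field is $g_2$-orthogonal to $C$, and together with the previous paragraph proves the lemma.

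I do not expect a real obstacle; the single point requiring care is the differential of $\chi_1$ along $C$. The subtlety is that although $\chi_1$ is the identity on $C$, its radial derivative picks up the term $-X$, and the definition of $X$ is precisely rigged so that this correction rotates the Euclidean normal onto the $g_1$-normal. The remaining verifications — smoothness and completeness of $\beta$, compact support of $\chi_s$, invertibility of the skew product — are routine and rely only on the non-vanishing of $a$ and the properties of the bump function $\rho$.
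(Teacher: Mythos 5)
Your proof is correct and follows essentially the same route as the paper's: both arguments rest on the observation that $\chi_1$ fixes $C$ pointwise (giving the geodesic claim via Lemma \ref{tangent}) and on the computation of the radial derivative $\partial_r\chi_1(z)=z-X(z)=-a(z)^{-1}N(z)$, which sends the Euclidean normal to the $g_1$-normal. The only difference is that you spell out the routine verifications (invertibility of the skew product, the pullback computation $g_2(z,iz)=0$) that the paper leaves implicit.
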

\begin{proof}
The second claim follows because $\chi_s$ is the identity on $C$ for all $s$.

Notice that $\chi_1(re^{i t}) = r\beta_{1-r}(e^{i t})$ on some neighborhood of $C$.  Hence the radial derivative of $\chi_1$ at a point $z \in C$ is
$$\partial_r\chi_1(z) = z - X(z)=\frac{a(z)z - b(z)i z}{a(z)}=-\frac{1}{a(z)}N(z).$$

The first claim follows because the right hand side is normal to $C$ with respect to $g_1$.  
\end{proof}

\begin{remark}
The isotopy $\chi_s$ depends only on the normal vector of $g_1$ and hence depends continuously on $g_1$ in the smooth topology. 
\end{remark}

\begin{remark}
If the metric $g_1$ is standard in a neighborhood of $C$ then one has $b(z) = 0$ so that $\beta$ is the trivial flow.  Hence the isotopy just described leaves $g_1$ invariant in this case.
\end{remark}

\paragraph{Symmetry of the procedure.} It remains to prove that the isotopy we described is symmetric. This is the content of the following lemma:
\begin{lemma}
\label{symmetry1}
If we set for $s\in[0,1]$, $A_2(s,g_1)=(\chi_s)^* g_1$, then we have that $R_* A_2(s,g_1)=A_2(s,R_* g_1)$ for every $s\in[0,1]$.
\end{lemma}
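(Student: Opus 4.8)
The plan is to reduce the whole statement to a single equivariance identity for the isotopy itself, namely
\[
R\circ\chi_s=\chi'_s\circ R\qquad\text{for all }s\in[0,1],
\]
where $\chi'_s$ denotes the isotopy produced by the construction of this subsection when it is fed the metric $R_*g_1$ in place of $g_1$ (so $\chi'_s$ is built from the $R_*g_1$-inward unit normal field $N'$ along $C$, the associated functions $a',b'$, the vector field $X'$ and its flow $\beta'$). Granting this identity and using that $R$ is an involution, one computes
\[
A_2(s,R_*g_1)=(\chi'_s)^*(R_*g_1)=(\chi'_s)^*(R^{-1})^*g_1=(R^{-1}\circ\chi'_s)^*g_1=(\chi_s\circ R)^*g_1=R_*\big((\chi_s)^*g_1\big)=R_*A_2(s,g_1),
\]
where the middle equalities use $R^{-1}=R$ together with the equivariance identity rewritten as $\chi_s\circ R=R\circ\chi'_s$. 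So everything comes down to proving the displayed identity.

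For that, the key observation is that the vector field $X$ transforms equivariantly under $R$, i.e. the analogous field $X'$ attached to $R_*g_1$ equals $R_*X$. This is where one uses that $R=-\mathrm{Id}$ is linear, so $dR_z=-\mathrm{Id}$ at every point, and that $R$ is an orientation-preserving diffeomorphism of $\A$ fixing $C$ setwise and preserving the component $\{|z|<1\}$; hence $R$ carries the $g_1$-inward unit normal field along $C$ to the $R_*g_1$-inward unit normal field along $C$, that is $N'(R(z))=dR_z(N(z))=-N(z)$. Substituting $w=R(z)=-z$ into $N(z)=-a(z)z+b(z)iz$ and $N'(w)=-a'(w)w+b'(w)iw$ one reads off $a'\circ R=a$ and $b'\circ R=b$ on $C$, whence $X'(R(z))=\frac{b(z)}{a(z)}\,i(-z)=-X(z)=dR_z(X(z))$, i.e. $X'=R_*X$. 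Since $R$ is an involution, the flow of $X'$ satisfies $\beta'_t=R\circ\beta_t\circ R$ for every $t\in\R$. Finally insert this into the polar formula $\chi_s(re^{it})=r\,\beta_{s\rho(r)(1-r)}(e^{it})$, noting that the time parameter $s\rho(r)(1-r)$ depends only on $r=|z|$, which $R$ leaves unchanged:
\[
R\big(\chi_s(re^{it})\big)=r\,R\big(\beta_{s\rho(r)(1-r)}(e^{it})\big)=r\,\beta'_{s\rho(r)(1-r)}\big(R(e^{it})\big)=\chi'_s\big(re^{i(t+\pi)}\big)=\chi'_s\big(R(re^{it})\big),
\]
which is exactly $R\circ\chi_s=\chi'_s\circ R$.

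The only genuinely delicate point is the bookkeeping in the second paragraph: one must be sure that ``inward for $g_1$'' is carried to ``inward for $R_*g_1$'' by $R$ — this uses that $R$ preserves both the orientation of $\A$ and the two sides of $C$ — and then chase the signs in the decomposition $N=-az+biz$ correctly, since $R$ acts simultaneously on the base point, on the frame $\{z,iz\}$, and on the normal vector. Everything else (the pullback algebra of the first paragraph and the radial substitution of the third) is formal once the construction has been recognized as natural with respect to isometries preserving $C$ together with its two complementary components.
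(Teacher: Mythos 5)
Your proof is correct and follows essentially the same route as the paper's: reduce to the conjugacy $R\circ\chi_s=\chi'_s\circ R$, derive it from $N'=R_*N=-N\circ R$ and hence $X'=R_*X$, and conclude that $R$ conjugates the flows $\beta$ and $\beta'$. You merely spell out more explicitly the pullback algebra and the sign bookkeeping for $a'$, $b'$ that the paper leaves implicit.
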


\begin{proof}
Denote by $N',X',\beta'$ the objects associated to $R_{\ast}g_1$. First, note that $R$ preserves the circles centered at the origin. Hence, it is enough to prove that  one has: $R\circ\beta_s=\beta_s'\circ R$.

But we see quite easily that the relation $\beta_s'=R\circ\beta_s\circ R$ holds, first because $R^2=Id$, and then because by definition $N'=R_{\ast}N=-N\circ R$. Hence, it comes that $X'=-X\circ R=R_{\ast}X$: we deduce that $R$ conjugates the two flows.
\end{proof}

\subsubsection{Conformality}

The $g_2$-norm of any unit Euclidean tangent vector to $C$ is $\ell/2\pi$ where $\ell$ is the length of $C$ with respect to $g_2$.  However the $g_2$-norm of a Euclidean normal vector to $C$ varies from point to point.  The purpose of this subsection is to deform $g_2$ to a metric $g_3$ such that both Euclidean normal and tangent vectors have the same $g_3$-norm.

\paragraph{The isotopy.} For this purpose let $N:C \to \R^2$ be the inward pointing vector field which has norm $\ell/2\pi$ and is orthogonal to $C$ with respect to $g_2$.  Fix $\lambda:C \to (0,\infty)$ so that $N(z) = -\lambda(z)z$ and define a smooth identity isotopy on $\A$ in polar coordinates by
$$\psi_s(re^{i t})= f_{s\lambda(e^{i t})}(r)e^{i t},$$
where $f_{\lambda}$ is the push diffeomorphism defined in Paragraph \ref{auxiliarystep}.

\begin{lemma}
The pullback metric $g_3 =\psi_1^{\ast}g_2$ is conformal with respect to the Euclidean metric on $C$.
\end{lemma}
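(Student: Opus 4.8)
The plan is to verify conformality \emph{pointwise along} $C$ by a direct computation of the differential of $\psi_1$ on the unit circle $r=1$, expressed in the Euclidean orthonormal frame $(\partial_r,\partial_t)=(z,iz)$ at a point $z=e^{it}\in C$.

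First I would record two elementary facts about $\psi_1$ on $C$. The maps $f_\mu$ of Paragraph \ref{auxiliarystep} all fix the point $1$ (this is forced by $f_\mu$ being an increasing self-diffeomorphism of $\R$ that is the identity off $[0.9,1.1]$, together with the requirement that the $\psi_s$ preserve $C$; if this normalization is only implicit there I would make it explicit), and they depend smoothly on $\mu$. Hence $t\mapsto f_{\lambda(e^{it})}(1)\equiv 1$ has vanishing $t$-derivative, which gives $\partial_t\psi_1=iz$ at $r=1$: the Euclidean unit tangent vector to $C$ is preserved. On the other hand, since $f_{\lambda(z)}$ has derivative $\lambda(z)$ at $1$, one gets $\partial_r\psi_1=\lambda(z)\,z$ at $r=1$. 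So in the frame $(z,iz)$ the differential $d\psi_1$ along $C$ is the diagonal map $\mathrm{diag}(\lambda(z),1)$.

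Next I would compute the three entries of $g_3=\psi_1^{\ast}g_2$ in this frame at a point of $C$. The cross term is $\lambda(z)\,g_2(z,iz)$, which vanishes by Lemma \ref{orthogonalize}, since the Euclidean radial direction is $g_2$-orthogonal to $C$. The tangential entry is $g_2(iz,iz)=(\ell/2\pi)^2$, the square of the (constant) $g_2$-norm of a Euclidean unit tangent vector to $C$. For the radial entry I would use that $N(z)=-\lambda(z)z$ has $g_2$-norm $\ell/2\pi$, so $g_2(z,z)=(\ell/2\pi)^2/\lambda(z)^2$ and therefore $\lambda(z)^2\,g_2(z,z)=(\ell/2\pi)^2$ as well. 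Thus at every point of $C$ the matrix of $g_3$ in the Euclidean orthonormal frame equals $(\ell/2\pi)^2\cdot\mathrm{Id}$, i.e. $g_3$ agrees along $C$ with $(\ell/2\pi)^2$ times the Euclidean metric; this is exactly conformality with respect to the Euclidean metric on $C$.

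I do not expect a genuine obstacle: the statement is bookkeeping, and the two preceding subsections were arranged precisely so that along $C$ the tangential $g_2$-norm is already the constant $\ell/2\pi$ and the Euclidean normal is already $g_2$-orthogonal, leaving $\psi_1$ to rescale only the radial direction by the single positive function $\lambda$. The one point that needs a little care is the identity $\partial_t\psi_1=iz$ on $C$, which is what makes $d\psi_1$ diagonal in the frame $(z,iz)$ and hence forces the cross term of $g_3$ to vanish; this rests on $f_\mu(1)=1$ and the smooth dependence of $f_\mu$ on $\mu$. As with the earlier steps, I would also note that if $g_2$ is already conformal near $C$ then $\lambda\equiv 1$, so $\psi_s$ is the identity and $g_3=g_2$.
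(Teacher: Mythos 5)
Your proof is correct and follows essentially the same route as the paper: the key facts are that $\psi_1$ fixes $C$ pointwise (so the Euclidean tangent keeps its $g_2$-norm $\ell/2\pi$) and that $d\psi_1$ sends the Euclidean unit normal to $N(z)$, which is $g_2$-orthogonal to $C$ and has $g_2$-norm $\ell/2\pi$; your explicit matrix computation in the frame $(z,iz)$ is just a more detailed rendering of this. Your observation that the normalization $f_\mu(1)=1$ is needed (and only implicit in the definition of the push diffeomorphisms) is a fair and correct remark.
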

\begin{proof}
Since $\psi_1$ is the identity on $C$ one obtains that the Euclidean unit tangent vector to $C$ has $g_3$-norm $\ell/2\pi$.

On the other hand the image of the Euclidean unit normal vector to $C$ at any point $z \in C$ is exactly $N(z)$ which has $g_2$-norm equal to $\ell/2\pi$ and is perpendicular to $C$ with respect to $g_2$. This establishes the claim.
\end{proof}

\begin{remark}
The isotopy $\psi_s$ depends smoothly on the metric $g_2$.
\end{remark}

\begin{remark}
If $g_2$ is standard on a neighborhood of $C$ then $\lambda$ is constant and equal to $1$, hence the isotopy is the identity in this case.
\end{remark}

\paragraph{Symmetry of the procedure.} It remains to prove that the procedure we described is symmetric.

\begin{lemma}
\label{symmetry3}
If we set for $s\in[0,1]$, $A_3(s,g_2)=(\psi_s)^* g_2$, then we have that $R_* A_3(s,g_2)=A_3(s,R_* g_2)$ for every $s\in[0,1]$.
\end{lemma}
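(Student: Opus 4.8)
The plan is to mimic the proof of Lemma~\ref{symmetry1} (and of the symmetry assertion in Lemma~\ref{tangent}). Write $N'$, $\lambda'$, $\psi_s'$ for the objects attached to the metric $R_*g_2$. Since $R$ preserves every circle centered at the origin, it suffices to establish the intertwining relation $R\circ\psi_s=\psi_s'\circ R$ on $\A$ for each $s\in[0,1]$; conjugating it by $R$ (and using $R^{-1}=R$) also gives $\psi_s\circ R=R\circ\psi_s'$. Granting this, and recalling that $R_*=R^{*}$ on metrics because $R$ is an involution,
\[R_*A_3(s,g_2)=(\psi_s\circ R)^{*}g_2=(R\circ\psi_s')^{*}g_2=(\psi_s')^{*}(R_*g_2)=A_3(s,R_*g_2),\]
which is exactly the asserted symmetry.

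The key identity is $N'=R_*N$. Indeed $R$ is an isometry from $g_2$ to $R_*g_2$ that maps $C$ onto $C$ and preserves its bounded interior, so $R_*N$ is inward-pointing along $C$, $R_*g_2$-orthogonal to $C$, and of the prescribed norm; hence it has all the defining properties of $N'$. Since $R$ is linear ($DR=R$, $R^{-1}=R$), this reads $N'(z)=-N(R(z))$; comparing with $N(z)=-\lambda(z)z$ and $N'(z)=-\lambda'(z)z$ gives $\lambda'=\lambda\circ R$ on $C$.

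It then remains to check $R\circ\psi_s=\psi_s'\circ R$ by a direct polar-coordinate computation. For $re^{it}\in\A$ one has $R(\psi_s(re^{it}))=-f_{s\lambda(e^{it})}(r)\,e^{it}=f_{s\lambda(e^{it})}(r)\,e^{i(t+\pi)}$ — the last equality uses that each $f_a$ $(a>0)$ is a positive increasing diffeomorphism of $\R$ fixing $0.9$ and $1.1$, so $f_a$ maps the radial coordinate of $\A$ into $(0,\infty)$ and the polar expression is unambiguous — whereas $\psi_s'(R(re^{it}))=\psi_s'(re^{i(t+\pi)})=f_{s\lambda'(e^{i(t+\pi)})}(r)\,e^{i(t+\pi)}$, and the two agree since $\lambda'(e^{i(t+\pi)})=\lambda(R(e^{i(t+\pi)}))=\lambda(e^{it})$ by the previous paragraph. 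I do not expect a genuine obstacle here; the construction is rigid and the computation routine. The only points requiring attention are the identification $N'=R_*N$ together with the attendant sign bookkeeping, and the harmless remark that the polar form of $R(\psi_s(re^{it}))$ is well defined.
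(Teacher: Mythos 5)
Your proof is correct and follows essentially the same route as the paper: reduce to the intertwining relation $R\circ\psi_s=\psi_s'\circ R$, deduce $N'=R_*N=-N\circ R$ and hence $\lambda'=\lambda\circ R$, and conclude. You simply spell out the polar-coordinate verification and the pullback bookkeeping that the paper leaves implicit.
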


\begin{proof}
Denote by $N',\lambda',\psi'_s$ the objects associated to $R_{\ast} g_2$. Here again, it is enough to prove that $R\circ\psi_s=\psi'_s\circ R$. By definition, we have $N'=\R_{\ast}N=-N\circ R$, in such a way that $\lambda'=\lambda\circ R$: this enough to ensure the conjugacy formula.
\end{proof}

\subsubsection{Standardness\label{sectionstandardness}}

\paragraph{Collar Lemma and Fermi coordinates.}

Consider the standard metric $\sigma = \sigma_\ell$ around $C$ (where $\ell$ is the length of $C$ with respect to $g_3$) and notice that $g_3$ coincides with $\sigma$ on $C$. Define the \emph{collar function} as:

\begin{equation}
\label{collarfunction}
\omega(\ell)=\sinh^{-1}\left(\frac{1}{\sinh(\frac{\ell}{2})}\right).
\end{equation}

The \emph{Collar Lemma} (see \cite{Bu}) asserts that the function $f$ that we will describe below is well defined on the round annulus $\NN_{g_3}$ formed by the points $z$ of the standard hyperbolic annulus $\A_{\ell}$ with $\sigma$-distance to $C$ less than $d_{g_3}=\Min(\omega(\ell),\delta_{g_3})$, where $\delta_{g_3}$ is the distance between $\partial\A$ and the geodesic $C$. Note that this function $d_{g_3}$ varies continuously with the metric $g_3$.

The function $f:\NN_{g_3}\to \A$ is defined as follows. The circle $C$ is mapped onto itself by the identity (here we use that the arc length parametrization of $C$ by $g_3$ is Euclidean: in the general case, we would have to use a more general differomorphism). Given $z \in \NN_{g_3}\cap \CC^{\pm}$, where $\CC^{\pm}=\{z\in\C:|z|^{\pm 1}>1\}$, let $d(z)$ be the $\sigma$-distance between $z$ and $z/|z|$. Then $f(z)$ is the point of $\CC^{\pm}$ which is at $g_3$-distance $d(z)$ along the $g_3$-geodesic passing through $z/|z|$ perpendicular to $C$. We just described the so-called \emph{Fermi coordinates} \cite{Bu}.

Note that a priori the Fermi coordinates depend on the choice of an arc length parametrization of $C$ by a rotation. Since $\sigma$ is rotationally invariant, the definition of $g$ as $F_*\sigma$ is coherent.

\paragraph{Interpolation between the identity and the Fermi coordinates.} Let $\kappa$ be the maximum of the first order derivatives of the coefficients of the metric $g_3$ on $\A$.  The constants $\epsilon_i$ below can be chosen to be continuous functions of $\ell$ and $\kappa$.

Let $\epsilon_2 \l \epsilon_1 \l 1/10$ be positive and such that the following properties hold.
\begin{enumerate}
 \item the metrics $\sigma$ and $g_3$ are bi-Lipschitz with Lipschitz constant less than or equal to $2$ on the annulus $\A_1 = \lbrace 1-\epsilon_1 \le |z| \le 1+\epsilon_1\rbrace$ (use here that the two metrics coincide in $C$).  In particular $\sigma$ is defined on this annulus.
 \item The $\sigma$ distance between the two boundaries of $\A_2 = \lbrace 1-\epsilon_2 \le |z| \le 1+\epsilon_2\rbrace$ is less than or equal to half of the corresponding distance between the boundaries of $\A_1$.
 \item The $\sigma$ distance between the two boundaries of $\A_2$ is less than or equal to $\omega(\ell)$ in such a way that $f$ is well defined on $\A_2$.
\end{enumerate}

Notice that $f$ is the identity on $C$ and furthermore, because $\sigma$ and $g_3$ coincide on $C$, one has that the differential $Df$ is the identity map at all points of $C$.  Furthermore $f$ is an isometry between $\sigma$ restricted to $\A_2$ and $g_3$ restricted to $f(\A_2) \subset \A_1$ (see the first two items).

For some $\epsilon \l \epsilon_2$ to be chosen later we define:
$$\rho_{\epsilon}(r)=\rho\left(1+\frac{r-1}{\epsilon}\right),$$
where $\rho$ is the bump function of Paragraph \ref{auxiliarystep}. Note that $z \mapsto\rho_{\epsilon}(|z|)$ is $1$ in a neighborhood of the unit circle and is $0$ outside $\A^{\epsilon}=\{z\in\C:\,1-0.09\,\epsilon<|z|<1+0.09\,\epsilon\}$. We will assume that $\epsilon$ is small enough in such a way that the $\epsilon$-neighborhood of $\A^{\epsilon}$ is included in $\A$.

Define for $s\in [0,1]$ the following map $\A\to\C$.
\[\Omega_s(z) = z + s\rho_{\epsilon}(|z|)(f(z) - z)\]
Outside of $\A_2$ the function $f$ might be undefined, but $\rho_{\epsilon}(|z|) = 0$, so it is understood that $\Omega_s(z) = z$ at these points for all $s \in [0,1]$.

The $1$-jets of $f$ and of the identity coincide on $C$, thus $|f(z)-z|$ is controlled by some quantity of order $\epsilon^2$ on $\A^{\epsilon}$ (the precise constants depending on $\kappa$). In particular, when $\epsilon$ is small enough, the segment $[z,f(z)]$ is included in the $\epsilon$-neighborhood of $A^{\epsilon}$ and $\Omega_s$ defines a map $\A\to \A$.

\paragraph{Properties of the interpolation.} Using again that $g_3$ is confomal at $C$, we can show that each step of the interpolation yields a diffeomorphism between $\A$ and its image.

\begin{lemma}
For every $s\in[0,1]$, $\Omega_s:\A\to\C$ is a diffeomorphism on its image.
\end{lemma}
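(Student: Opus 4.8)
The plan is to show that $\Omega_s$ is a $C^1$-small perturbation of the identity, uniformly in $s\in[0,1]$, and then to conclude by the standard fact that an injective immersion of an open subset of $\R^2$ into $\R^2$ is a diffeomorphism onto its (open) image. Write $\Omega_s=\mathrm{id}+v_s$ with $v_s(z)=s\,\rho_\epsilon(|z|)\,(f(z)-z)$; this is a smooth map, supported in $\A^\epsilon$, and everything below will rely on the bound $|f(z)-z|=O(\epsilon^2)$ on $\A^\epsilon$ recalled just before the statement.

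The first and main step is the estimate on $Dv_s$. On $\A^\epsilon$ the $1$-jet of $f-\mathrm{id}$ vanishes along $C$, and the second derivatives of $f$ are bounded in terms of $\kappa$ and $\ell$; since $\A^\epsilon$ lies within $0.09\,\epsilon$ of $C$, Taylor's inequality gives both $|f(z)-z|=O(\epsilon^2)$ and $\|D(f-\mathrm{id})(z)\|=O(\epsilon)$ there. The function $z\mapsto\rho_\epsilon(|z|)$ has gradient of size $O(1/\epsilon)$, so by the product rule
\[
\|Dv_s(z)\|\le s\Big(\rho_\epsilon(|z|)\,\|D(f-\mathrm{id})(z)\|+|f(z)-z|\cdot\big|\nabla\big(\rho_\epsilon(|\cdot|)\big)(z)\big|\Big)=O(\epsilon)
\]
on $\A^\epsilon$ — the potentially dangerous $1/\epsilon$ coming from $\nabla\rho_\epsilon$ being cancelled by the $\epsilon^2$ coming from $f-\mathrm{id}$ — while $Dv_s\equiv 0$ off $\A^\epsilon$; similarly $\|v_s\|_{C^0(\A)}=O(\epsilon^2)$. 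Since $v_s$ depends linearly on $s$, all these bounds hold uniformly for $s\in[0,1]$, with implied constants depending only on $\ell$ and $\kappa$. Hence, after shrinking $\epsilon$ (a choice that can again be made continuously in $\ell,\kappa$), one has $\|D\Omega_s(z)-\mathrm{id}\|\le \tfrac12$ for all $z\in\A$ and all $s\in[0,1]$.

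From $\|D\Omega_s(z)-\mathrm{id}\|\le\tfrac12$ it follows at once that $D\Omega_s(z)$ is invertible for every $z$, so $\Omega_s$ is an immersion and hence an open map. For injectivity, suppose $\Omega_s(z_1)=\Omega_s(z_2)$. Then $|z_1-z_2|=|v_s(z_2)-v_s(z_1)|\le 2\|v_s\|_{C^0(\A)}=O(\epsilon^2)$. If both $z_i\notin\A^\epsilon$, then $v_s(z_i)=0$ and $z_1=z_2$. Otherwise one of them, say $z_2$, lies in $\A^\epsilon$ and is therefore at Euclidean distance at least $0.09$ from $\partial\A$; since $|z_1-z_2|=O(\epsilon^2)$, for $\epsilon$ small the whole segment $[z_1,z_2]$ stays in $\A$, and the mean value inequality gives $|z_1-z_2|=|v_s(z_2)-v_s(z_1)|\le\sup_\A\|Dv_s\|\cdot|z_1-z_2|\le\tfrac12|z_1-z_2|$, forcing $z_1=z_2$. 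Thus $\Omega_s$ is an injective immersion, hence a diffeomorphism onto its image, for every $s\in[0,1]$.

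I expect the only real content to be the $C^1$ estimate of the second paragraph — concretely, checking that the bump-function term $|f(z)-z|\cdot|\nabla\rho_\epsilon(|z|)|$ stays bounded, which is exactly where the already established $O(\epsilon^2)$ control of $|f-\mathrm{id}|$ on $\A^\epsilon$ is indispensable. The remaining points are routine: uniformity in $s$ is immediate from the linear dependence of $v_s$ on $s$, and the mild non-convexity of $\A$ is harmless because any coincidence $\Omega_s(z_1)=\Omega_s(z_2)$ already forces $|z_1-z_2|$ to be of order $\epsilon^2$, so the relevant chord automatically lies in $\A$.
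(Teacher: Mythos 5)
Your proof is correct, and the key estimate is the same as the paper's: both arguments hinge on bounding $\|D\Omega_s-\mathrm{Id}\|$ by $1/2$ after balancing the $O(1/\epsilon)$ size of $\nabla(\rho_\epsilon(|\cdot|))$ against the $O(\epsilon^2)$ control of $f-\mathrm{id}$ on $\A^\epsilon$ (you are in fact slightly more explicit than the paper in justifying $\|D(f-\mathrm{id})\|=O(\epsilon)$ from the vanishing of the $1$-jet on $C$). Where you diverge is the passage from "local diffeomorphism" to "injective": the paper invokes degree theory — $\Omega_s$ is the identity near $\partial\A$ (and on $C$), hence a proper local diffeomorphism of degree $1$, hence a bijection onto its image — whereas you give a quantitative contraction argument, first forcing any two points with the same image to be $O(\epsilon^2)$-close and then applying the mean value inequality along the chord. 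Your route is more elementary and self-contained, and it explicitly disposes of the non-convexity of $\A$ (the chord lies in $\A$ because coincident images already force the points to be very close to $C$), a point the degree argument sidesteps entirely; the paper's route is shorter but leans on the topological fact about proper degree-one local homeomorphisms. Both are valid proofs of the lemma.
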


\begin{proof}
Let $\kappa_2$ be an upper bound for the derivative of $\rho$ and notice that the differential of $z \mapsto \rho_{\epsilon}(|z|)$ is bounded by $\kappa_2/\epsilon$. Using again that $|f(z)-z|$ is of the order of $\epsilon^2$ on $\A^{\epsilon}$, we see that we may choose $\epsilon$ (continuously depending on $\ell$,$\kappa$ and $\kappa_2$) so that for all $z \in \A$:
\[|D\Omega_s(z) - \text{Id}| \le \kappa_2|f(z)-z| + \rho_{\epsilon}(z)|Df(z) - \text{Id}| \le 1/2.\]

For this value of $\epsilon$ one obtains that $D\Omega_s$ is everywhere invertible for all $s$.  Since $\Omega_s$ is the identity in $C$, it must have degree $1$ and therefore is a diffeomorphism for all $s$.
\end{proof}

\begin{lemma}
The pullback metric $g_4 =\Omega_1^{\ast}g_3$ is standard around $C$.
\end{lemma}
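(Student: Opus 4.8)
The plan is to observe that, on a genuine neighbourhood of $C$, the interpolation map $\Omega_1$ coincides with the Fermi coordinate map $f$, and then to invoke the fact --- already established above --- that $f$ is an isometry from $\sigma_\ell$ to $g_3$ near $C$.

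First I would pin down the region where $\rho_\epsilon\equiv 1$. Since $\rho$ equals $1$ on $[0.99,1.01]$, one has $\rho_\epsilon(r)=\rho\big(1+(r-1)/\epsilon\big)=1$ whenever $1-0.01\,\epsilon\le r\le 1+0.01\,\epsilon$; hence $\rho_\epsilon(|z|)=1$ on the round annular neighbourhood $\NN=\{z\in\C:\,1-0.01\,\epsilon\le|z|\le 1+0.01\,\epsilon\}$ of $C$. As $\epsilon<\epsilon_2<1/10$, this $\NN$ is contained in $\A_2$. On $\NN$ the defining formula gives $\Omega_1(z)=z+\rho_\epsilon(|z|)(f(z)-z)=f(z)$, so $\Omega_1\equiv f$ on $\NN$ and therefore $g_4=\Omega_1^{\ast}g_3=f^{\ast}g_3$ on $\NN$.

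Next I would use that $f$ is an isometry between $\sigma=\sigma_\ell$ restricted to $\A_2$ and $g_3$ restricted to $f(\A_2)\subset\A_1$, as recorded in the paragraph on the Collar Lemma and Fermi coordinates (items 1--3 in the choice of $\epsilon_1,\epsilon_2$ ensure that $f$ is defined on $\A_2$ and that $\sigma$ and $g_3$ are comparable there). Since $\NN\subset\A_2$ this gives $f^{\ast}g_3=\sigma_\ell$ on $\NN$, hence $g_4=\sigma_\ell$ on the neighbourhood $\NN$ of $C$. In particular $C$ is a closed $g_4$-geodesic, being the unit circle, which is a $\sigma_\ell$-geodesic. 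It then only remains to note that $\ell$ is the $g_4$-length of $C$: indeed $\Omega_1$ restricts to the identity on $C$ and, because the $1$-jets of $f$ and of the identity coincide on $C$, $D\Omega_1=\mathrm{Id}$ along $C$, so the $g_4$-length of $C$ equals its $g_3$-length $\ell$ (this is anyway automatic once $g_4=\sigma_\ell$ near $C$). Thus $g_4$ coincides near $C$ with the pushforward of $\sigma_\ell$ under the identity map $z\mapsto z$ --- a conformal map of the form $z\mapsto az+b$ taking the unit circle to $C$ --- so by definition $g_4$ is standard around $C$.

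I do not expect a real obstacle here: the substantive work, namely the isometry property of the Fermi coordinates and the fact that each $\Omega_s$ is a diffeomorphism onto its image, has already been carried out, and what remains is the bookkeeping that $\{\rho_\epsilon\equiv 1\}$ is an honest neighbourhood of $C$ sitting inside $\A_2$, together with the trivial verification of the length normalisation.
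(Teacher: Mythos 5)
Your proof is correct and is essentially the paper's argument: the paper's entire proof is the one-line observation that $\Omega_1$ coincides with $f$ on a neighborhood of $C$, and you have simply spelled out the bookkeeping (that $\{\rho_\epsilon\equiv 1\}$ is a genuine neighborhood of $C$ inside $\A_2$, and that $f^{\ast}g_3=\sigma_\ell$ there by the isometry property of the Fermi coordinates).
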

\begin{proof}
The claim amounts to the fact that $\Omega_1$ coincides with $f$ on a neighborhood of $C$.
\end{proof}

\begin{remark}
The isotopy $\Omega_s$ depends smoothly on the metric $g_3$ through the constants $\ell$ and $\kappa$.
\end{remark}

\begin{remark}
Even if $g_3$ is standard on a neighborhood of $C$ it may be the case that $\Omega_s$ is not the identity map for all $s$ (and even $\Omega_1$ may not be the identity).  However, in this case, $\Omega_s$ will be the identity on a neighborhood of $C$ for all $s \in [0,1]$.

%
\end{remark}

\begin{lemma}
\label{preservesym4}
If we set for $s\in[0,1]$, $A_4(s,g_3)=(\Omega_s)^* g_3$, then we have that $R_* A_4(s,g_3)=A_4(s,R_* g_3)$ for every $s\in[0,1]$.
\end{lemma}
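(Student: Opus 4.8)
The plan is to follow exactly the template of Lemmas \ref{symmetry1} and \ref{symmetry3}. Denote by $f'$ and $\Omega_s'$ the Fermi coordinate map and the interpolation isotopy associated to the metric $R_{\ast}g_3$ in place of $g_3$. Since $R$ is linear and preserves every circle centered at the origin, it commutes with the map $z\mapsto\rho_{\epsilon}(|z|)$; consequently it suffices to establish the conjugacy relation $R\circ\Omega_s=\Omega_s'\circ R$ for all $s\in[0,1]$. Indeed, using $R^{-1}=R$ and hence $R^{\ast}=R_{\ast}$ on metrics, this conjugacy gives $R_{\ast}A_4(s,g_3)=R^{\ast}\bigl(\Omega_s^{\ast}g_3\bigr)=(\Omega_s\circ R)^{\ast}g_3=(R\circ\Omega_s')^{\ast}g_3=(\Omega_s')^{\ast}\bigl(R^{\ast}g_3\bigr)=A_4(s,R_{\ast}g_3)$, which is the assertion.

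The heart of the matter is the equivariance of the Fermi coordinates: I claim $f'=R\circ f\circ R$. First, $R$ maps $C$ to itself preserving the Euclidean arc length parametrization, so $C$ remains a geodesic of $R_{\ast}g_3$ of the same length $\ell$, still parametrized by Euclidean arc length; hence the standard annulus $(\A_{\ell},\sigma_{\ell})$ attached to $R_{\ast}g_3$ is literally the same one, and $R$ is a $\sigma_{\ell}$-isometry precisely because $\sigma_{\ell}$ is rotationally invariant. Next, the collar domain $\NN_{g_3}$ depends only on $\ell$ and on the circles $\partial\A$ and $C$, all of which are $R$-invariant, so $\NN_{R_{\ast}g_3}=R(\NN_{g_3})=\NN_{g_3}$; in particular $d_{g_3}=\Min(\omega(\ell),\delta_{g_3})$ is unchanged. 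Finally, in the defining recipe for $f$: the quantity $d(z)$ (the $\sigma$-distance between $z$ and $z/|z|$) is $R$-invariant because $R$ is a $\sigma$-isometry commuting with $z\mapsto z/|z|$, and $R$ carries the $g_3$-geodesic through $z/|z|$ perpendicular to $C$ onto the $R_{\ast}g_3$-geodesic through $Rz/|Rz|$ perpendicular to $C$, preserving $g_3$-arc length. Combining these, $f'(Rz)=R(f(z))$, i.e. $f'=R\circ f\circ R$ (using $R^2=\mathrm{Id}$). One must also observe that the auxiliary constants $\epsilon_1,\epsilon_2,\epsilon$ and the bounds $\kappa,\kappa_2$ may be chosen identical for $g_3$ and $R_{\ast}g_3$: since $R$ is a Euclidean isometry of $\A$, the coefficients of $R_{\ast}g_3$ satisfy the same first-order-derivative bounds and the same bi-Lipschitz estimates against $\sigma_{\ell}$ on $\A_1$, so the same $\epsilon$ is admissible in the definition of $\Omega_s'$.

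With these observations the conjugacy is a one-line computation: since $R$ is linear, $R\circ\Omega_s(z)=Rz+s\rho_{\epsilon}(|z|)\bigl(Rf(z)-Rz\bigr)$, and using $\rho_{\epsilon}(|z|)=\rho_{\epsilon}(|Rz|)$ together with $Rf(z)=f'(Rz)$ (which follows from $f'=R\circ f\circ R$ and $R^2=\mathrm{Id}$, writing $z=R(Rz)$) this equals $Rz+s\rho_{\epsilon}(|Rz|)\bigl(f'(Rz)-Rz\bigr)=\Omega_s'(Rz)$, as required. I expect the only genuinely delicate point to be the careful verification of the Fermi-coordinate equivariance $f'=R\circ f\circ R$ — in particular, pinning down that it is the rotational symmetry of $\sigma_{\ell}$ that makes $R$ an isometry of the model annulus, and that none of the constants cutting out the domain of $f$ or of the interpolation are altered by $R$; everything after that is formal manipulation identical to the earlier symmetry lemmas.
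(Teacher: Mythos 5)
Your argument is correct and follows exactly the route of the paper's (much terser) proof: the key point in both is that the definition of the Fermi coordinates is equivariant, i.e. $f'=R\circ f\circ R$, from which the conjugacy $R\circ\Omega_s=\Omega_s'\circ R$ and hence the symmetry of $A_4$ follow formally. Your write-up simply supplies the details — the rotational invariance of $\sigma_\ell$, the $R$-invariance of $\rho_\epsilon(|z|)$ and of the auxiliary constants — that the paper leaves implicit.
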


\begin{proof}
Let $f'$, and $\Omega_s'$ be the objects associated to $R_{\ast} g_3$. The definition of Fermi coordinates implies that $R$ conjugates $f$ and $f'$, hence it conjugates the homotopies $\Omega_s$ and $\Omega_s'$. Finally, the equality  $R_* A_4(s,g_3)=A_4(s,R_* g_3)$ has to hold.
\end{proof}

\section{Hyperbolic metrics on planar pairs of pants}\label{sectionpants}
\subsection{Hyperbolic metrics and conformal structures}
\paragraph{Planar pairs of pants.} A \emph{planar pair of pants} is an ordered triple of open Euclidean disks $(D_{out},D_{left},D_{right})$ such that the closures of $D_{left}$ and $D_{right}$ are disjoint and are both contained in the interior of $D_{out}$.   We sometimes identify a planar pair of pants with the subset of the plane defined by $\overline{D_{out}} \setminus (D_{left} \cup D_{right})$ where the overline denotes closure (however it is important for us to keep track of which of the interior disks is the ``left'' one and which is the ``right'').

\begin{lemma}\label{conformalmapsofpants}
Let $P$ and $Q$ be planar pairs of pants and $f:P \to Q$ be a conformal mapping between them.  Then $f$ is the restriction of a Möbius transformation. 
\end{lemma}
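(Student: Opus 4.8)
The plan is to reduce the claim to the rigidity of Möbius transformations via the following standard observation: a conformal homeomorphism between two multiply connected domains in the Riemann sphere extends to a conformal automorphism of the sphere provided the complementary components are nondegenerate (i.e.\ not points). Since each of our planar pairs of pants $P = \overline{D_{out}} \setminus (D_{left} \cup D_{right})$ is a closed subset of $\C \subset \widehat{\C}$ whose complement in $\widehat{\C}$ has exactly three connected components, namely $\widehat{\C} \setminus \overline{D_{out}}$, $D_{left}$, and $D_{right}$, each of which is an open round disk (hence a nondegenerate continuum), one is in precisely the situation where the extension argument applies.

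First I would set up the reflection argument. Each boundary circle of $P$ is a Euclidean circle (the boundary of a round disk), and likewise for $Q$. By Schwarz reflection across a circle — i.e.\ conjugating $f$ by the anticonformal inversions in the boundary circles of $P$ on the source side and in the corresponding boundary circles of $Q$ on the target side — one extends $f$ conformally across each of the three boundary components. Concretely: near a boundary circle $\gamma$ of $P$, $f$ maps $\gamma$ to a boundary circle $\gamma'$ of $Q$; reflecting $P$ across $\gamma$ produces a slightly larger domain on which $f$ extends by the formula $z \mapsto \iota_{\gamma'} \circ f \circ \iota_\gamma(z)$ for $z$ in the reflected region, where $\iota_\gamma,\iota_{\gamma'}$ are the circle inversions. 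These local extensions agree with $f$ on $P$ and with each other on overlaps, so one obtains a conformal map $F$ defined on an open neighborhood $U$ of $P$ in $\widehat{\C}$. Iterating the reflections (or invoking that the group generated by the three inversions has limit set of measure zero / using a single round of reflection plus a connectedness/monodromy argument) one can continue $F$ to larger and larger domains; the cleanest route is to note that after one reflection the three "holes" have shrunk, and the complement of the maximal domain of extension is a set on which the iterated reflections accumulate, which is totally disconnected, so $F$ extends to a conformal map defined on a domain whose complement in $\widehat{\C}$ has empty interior — but a conformal (in particular continuous, injective) map is an open map, so in fact $F$ extends holomorphically and injectively across these removable points, giving a conformal automorphism $F: \widehat{\C} \to \widehat{\C}$.

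Once $F$ is a conformal automorphism of $\widehat{\C}$, it is a Möbius transformation by the classical classification (the holomorphic automorphisms of the Riemann sphere are exactly the elements of $PSL_2(\C)$), and $f = F|_P$ is its restriction, which is exactly the assertion. An alternative and perhaps more self-contained finish avoids the full iterated-reflection bookkeeping: after a single reflection across the outer circle $\partial D_{out}$ one already has $f$ extended to a conformal embedding of $\widehat{\C}$ minus two small round disks (the reflections of $D_{left}$ and $D_{right}$) into $\widehat{\C}$; such a map is defined on a domain whose complement consists of two disjoint closed topological disks, and one can then appeal to the removability of the analytic continuation obstruction — or simply observe that $\partial D_{out}$ being mapped to $\partial D_{out}'$ forces, after postcomposing with a Möbius transformation sending $\partial D_{out}'$ to $\partial D_{out}$, a self-map fixing that circle, and then analyze the two remaining disks by the Schwarz lemma type rigidity.

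The main obstacle I anticipate is making the iterated-reflection extension rigorous without hand-waving: one must check that the successively reflected copies of $P$ tile a domain whose complement is the limit set of the reflection group, that this limit set is totally disconnected (for three disjoint round disks the associated Schottky-type group has limit set a Cantor set of zero area), and that a bounded conformal injection extends holomorphically across such a set — the last point being a removable singularity statement that needs the set to be, say, of zero logarithmic capacity or at least to have the requisite removability property for conformal maps. To keep the proof short it is cleanest to invoke the well-known fact that a conformal map between domains bounded by finitely many circles extends to a Möbius transformation of $\widehat{\C}$ (a standard consequence of Schwarz reflection, see any text on conformal mapping), and to cite it rather than reprove it, then simply verify that our planar pairs of pants are bounded by three round circles so the fact applies verbatim.

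\begin{proof}
Regard $P = \overline{D_{out}} \setminus (D_{left} \cup D_{right})$ and $Q$ as closed subsets of the Riemann sphere $\widehat{\C}$. The complement $\widehat{\C} \setminus P$ has exactly three connected components: the two round disks $D_{left}$, $D_{right}$, and the round disk $\widehat{\C}\setminus\overline{D_{out}}$ (a disk through $\infty$); the same holds for $Q$. Each boundary component of $P$ (and of $Q$) is thus a Euclidean circle on the sphere.

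Let $\iota_\gamma$ denote the anticonformal inversion of $\widehat{\C}$ fixing a circle $\gamma$. Given a boundary circle $\gamma$ of $P$, the map $f$ carries a collar of $\gamma$ in $P$ onto a collar of the corresponding boundary circle $\gamma'$ of $Q$; by the Schwarz reflection principle the formula $z \mapsto \iota_{\gamma'} \circ f \circ \iota_\gamma(z)$ extends $f$ conformally across $\gamma$ to the reflected collar $\iota_\gamma(\text{collar})$. Performing this across all three boundary circles gives a conformal extension of $f$ to an open neighborhood of $P$ in $\widehat{\C}$ whose complement consists of three smaller round disks (the images of the original holes under the respective inversions). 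Iterating, the complementary disks shrink and the domain of the conformal extension exhausts the complement of the limit set $\Lambda$ of the group generated by the three inversions; for three pairwise disjoint round circles this limit set is totally disconnected and of zero area, hence of zero analytic capacity. A bounded injective holomorphic map defined on the complement of such a set extends holomorphically across it. We therefore obtain a conformal injection $F : \widehat{\C} \to \widehat{\C}$ restricting to $f$ on $P$; being a conformal bijection of the sphere, $F$ lies in $PSL_2(\C)$, i.e.\ $F$ is a Möbius transformation, and $f = F|_P$.
\end{proof}
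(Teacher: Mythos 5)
Your overall strategy is exactly the paper's: extend $f$ by Schwarz reflection across the three boundary circles, obtain a conformal map off the limit set of the group generated by the three inversions, and then remove that limit set to get a conformal automorphism of $\widehat{\C}$, hence a Möbius transformation. The reflection bookkeeping and the identification of the obstruction with the limit set are fine.

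The one genuine flaw is the removability step: the implication ``of zero area, hence of zero analytic capacity'' is false. A straight segment of length $\ell$ has zero area but analytic capacity $\ell/4$; more generally there are many compact sets of zero (even Hausdorff-dimension-one) area with positive analytic capacity, so zero area buys you nothing here. The correct sufficient condition, and the one the paper invokes, is Painlev\'e's theorem: a compact set of zero \emph{one-dimensional Hausdorff measure} (zero length) has zero analytic capacity and is therefore removable for bounded holomorphic functions. So you need to argue that the limit set of the group generated by the three circle inversions has zero length, not merely zero area — this is the actual content of the removability step and should be stated (or cited) explicitly. A secondary, smaller point: once you have the holomorphic extension $F:\widehat{\C}\to\widehat{\C}$ you should say a word about why it is injective (e.g.\ it is a non-constant holomorphic self-map of the sphere that is injective off a totally disconnected set, hence has degree one); the paper sidesteps this by first extending $F$ as a homeomorphism of the sphere, using that it conjugates the two inversion groups, before applying removability.
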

\begin{proof}
Let $G_P$ be the group generated by inversions with respect to the boundary components of $P$ and $G_Q$ the corresponding group for the pair of pants $Q$.   If we let $K_P$ and $K_Q$ be the limit sets of $G_P$ and $G_Q$ respectively there is a unique extension $f$ to a map $F:\widehat{\C} \setminus K_P \to \widehat{\C}\setminus K_Q$ satisfying $F\circ I = I'\circ F$ for each inversion $I$ with respect to a boundary component of $P$, where $I'$ is the inversion with respect to the image under $f$ of this component.

Because $F$ conjugates the actions of $G_P$ and $G_Q$ it extends continuously in a unique way to $K_P$ yielding a homeomorphism of the Riemann sphere which is conformal outside of $K_P$.

However, since $K_P$ has zero one dimensional Hausdorff measure one obtains that $F$ extends conformally to the entire Riemann sphere by Painlevé's theorem (e.g. see \cite[Theorem 2.7]{dudziak2010}).  Therefore $f:P \to Q$ is the restriction of a Möbius transformation.
\end{proof}

\paragraph{Beltrami coefficients.} Let $H(P)$ be the space of smooth hyperbolic Riemannian metrics on $P$ with the property that all three boundary circles are geodesics.  The space is endowed with the topology of smooth convergence.  Using complex notations (i.e. writing $\d z=1/2(\d x+i\d y)$ and $\d\bar{z}=1/2(\d x-i\d y)$: see \cite{Ahlfors2006}), each such Riemannian metric $g$ can be written in the form:
\[\d s^2 = \lambda |\d z + \mu \d \overline{z}|^2\]
for a unique pair $\lambda$ and $\mu$ of smooth functions from $P$ to $\C$ satisfying $\lambda \in \R_+^{\ast}$ and $|\mu| \l 1$ at all points. The functions $\lambda$ and $\mu$ are smooth functions on the coefficients of $g$.

The smooth function $\mu$ is called the \emph{Beltrami coefficient} of the metric $g$.  We denote the space of smooth Beltrami coefficients (i.e. smooth complex valued functions with modulus strictly less than $1$) endowed with the topology of smooth convergence by $B(P)$.  Each Beltrami coefficient can be interpreted as a conformal structure on $P$ via the formula $\d s^2 = |\d z + \mu \d \overline{z}|^2$.

Notice that while $B(P)$ is convex it is not self-evident that $H(P)$ is even arc-connected (how does one interpolate between two hyperbolic metrics with geodesic boundary while preserving these properties?).  The following theorem allows one to construct continuous paths of hyperbolic metrics with geodesic boundary components on $P$ by going through $B(P)$.

\begin{theorem}[Equivalence of conformal structures and hyperbolic metrics]\label{hyperbolicvsbeltrami}
The map $g \mapsto \mu$ associating to each hyperbolic metric in $H(P)$ its Beltrami coefficient in $B(P)$ is a homeomorphism.
\end{theorem}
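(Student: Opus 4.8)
The goal is to show that $g \mapsto \mu$ from $H(P)$ to $B(P)$ is a homeomorphism. The strategy is to produce a continuous inverse: given a Beltrami coefficient $\mu \in B(P)$, I must construct a hyperbolic metric in $H(P)$ (with geodesic boundary) whose Beltrami coefficient is exactly $\mu$, and show this assignment is continuous. The natural route is via uniformization: a Beltrami coefficient on $P$ defines a conformal structure, hence a Riemann surface structure on (the interior of) $P$; this Riemann surface is a hyperbolic surface of the topological type of a pair of pants, so it carries a unique complete conformal hyperbolic metric (the Poincaré metric of its universal cover). One then checks that in this conformal structure the three boundary components can be realized as geodesics, or rather that the hyperbolic metric on a slightly larger surface restricts correctly. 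The cleanest implementation: double $P$ across its boundary to get a genus-two closed surface, extend $\mu$ to a Beltrami coefficient on the double invariant under the reflection, solve the Beltrami equation to uniformize the doubled Riemann surface as $\Hyp/\Gamma$, let the hyperbolic metric descend, and observe that the reflection becomes an isometric involution whose fixed locus is a disjoint union of three simple closed geodesics — these are the images of the boundary of $P$. Restricting the hyperbolic metric of the double to $P$ gives the desired element of $H(P)$.

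The key steps, in order: (1) Given $\mu \in B(P)$, extend it by reflection to a smooth $\sigma$-invariant Beltrami coefficient $\hat\mu$ on the closed double $\widehat{P}$, where $\sigma$ is the natural reflection; here one uses Lemma \ref{conformalmapsofpants}-type reasoning, or rather the fact that on the boundary circles the reflection acts anticonformally so the extension is genuinely smooth. (2) Apply the measurable (here smooth) Riemann mapping theorem / solution of the Beltrami equation to obtain a quasiconformal homeomorphism $w^{\hat\mu}:\widehat{P} \to X$ onto a closed Riemann surface $X$ of genus two, unique up to conformal automorphism; the anticonformal involution $\sigma$ pushes forward to an anticonformal involution $\tau$ of $X$. (3) Uniformize $X = \Hyp/\Gamma$; the involution $\tau$ lifts to an orientation-reversing isometry of $\Hyp$, so it is an isometry of the hyperbolic metric on $X$, and its fixed-point set is a finite union of simple closed geodesics, which must be exactly the three curves coming from $\partial P$ (they are the ones separating the two halves). (4) Pull everything back: transport the hyperbolic metric from $X$ to $\widehat{P}$ via $w^{\hat\mu}$, restrict to $P$; by construction the $\partial P$ components are geodesics, the metric is hyperbolic, and its Beltrami coefficient with respect to the standard complex coordinate on $P \subset \C$ is $\mu$ (since $w^{\hat\mu}$ was chosen to solve the $\hat\mu$-Beltrami equation). (5) Continuity in both directions: $g\mapsto\mu$ is continuous and injective essentially by definition (the functions $\lambda,\mu$ depend smoothly on the coefficients of $g$, as noted in the text, and $\mu$ determines $g$ up to the conformal factor $\lambda$, which is then pinned down by the hyperbolicity PDE $-\Delta \log\sqrt\lambda = $ curvature $= -1$ together with the geodesic-boundary condition); continuity of the inverse follows from continuous dependence of the solution of the Beltrami equation on $\hat\mu$ and continuous dependence of the uniformizing hyperbolic metric on the complex structure (Candel-type or classical results).

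The main obstacle is step (5), specifically the surjectivity/well-definedness of the inverse together with its continuity: one must be careful that solving the Beltrami equation only determines $X$ up to conformal isomorphism, and the normalization must be chosen (e.g. fixing three boundary points, or the three boundary circles) so that the resulting hyperbolic metric on $P$ is canonical and varies continuously with $\mu$. A secondary subtlety is verifying that the fixed locus of the anticonformal involution $\tau$ on the doubled hyperbolic surface is precisely three disjoint simple closed geodesics isotopic to $\partial P$ — this is standard for real structures on Riemann surfaces but needs the remark that $\widehat{P}\setminus \partial P$ has two components exchanged by $\sigma$, forcing the fixed curves to be these separating curves. Finally, one should record that injectivity of $g\mapsto\mu$ on $H(P)$ — two hyperbolic metrics with geodesic boundary and the same Beltrami coefficient differ by a conformal automorphism of $P$ that is the identity on the boundary, hence (by Lemma \ref{conformalmapsofpants}) the identity — closes the argument that the map is a bijection, and with the continuity statements, a homeomorphism.
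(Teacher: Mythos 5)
Your proposal is correct and is essentially the second argument the paper itself sketches: extend the Beltrami coefficient, solve the Beltrami equation, pass to the Schottky double to reduce to the closed (genus two) case, and use the anticonformal involution to see that the boundary curves are geodesics, with continuity coming from Ahlfors--Bers. The only cosmetic difference is the order of operations — the paper solves the Beltrami equation on the plane and applies Koebe's theorem \emph{before} doubling by inversions, which sidesteps the (harmless, since $L^\infty$ coefficients suffice) issue that your reflected coefficient $\hat\mu$ need not be smooth across $\partial P$.
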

\begin{proof}
The result amounts to establishing that there is a unique hyperbolic metric with geodesic boundary in the conformal equivalence class determined by each Beltrami coefficient.

For compact surfaces without boundary the equivalence between Beltrami coefficients and conformal equivalence classes of Riemannian metrics is established for example in \cite[Theorem 1.8]{ImayoshiTamagoshi}.  The uniformization theorem implies that each conformal class of Riemannian metrics contains exactly one hyperbolic metric (see for example \cite{Berger1971}).

For the pair of pants $P$ one way to proceed is to adapt Berger's PDE argument (adding a Neumann boundary condition) to establish that there is exactly one hyperbolic metric with geodesic boundary in each conformal equivalence class.

Another approach is to extend the Beltrami coefficient smoothly to the plane and solve the \emph{Beltrami equation} (see \cite{Ahlfors2006}).  This maps the given conformal structure on $P$ to the Euclidean structure on some domain in $\C$ with smooth boundary.  Applying Koebe's theorem this domain can be mapped conformally to a domain whose boundary curves are circles.  At this point one can ``double'' the surface by using inversions with respect to the three boundary circles.  After this is done the result is reduced to the case of a compact surface without boundary (the only caveat being that if the given conformal structure possesses a conformal symmetry then that symmetry is an isometry for the hyperbolic metric given by the uniformization theorem).
\end{proof}

A conformal metric on a planar pair of pants $P$ is a smooth Riemannian metric with Beltrami coefficient equal to $0$ (i.e. the metric is conformal to the Euclidean metric on $\C$).  We will need the following corollary of the theorem above:
\begin{coro}\label{planarpantsconformalmetric}
Every planar pair of pants $P$ admits a unique conformal hyperbolic metric $g_P$ with geodesic boundary.
\end{coro}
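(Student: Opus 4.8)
The plan is to deduce this from Theorem \ref{hyperbolicvsbeltrami} together with Lemma \ref{conformalmapsofpants}. The statement to prove, Corollary \ref{planarpantsconformalmetric}, asserts that a given planar pair of pants $P$ carries a hyperbolic metric with geodesic boundary which is moreover conformal to the ambient Euclidean metric, and that such a metric is unique.

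For existence, I would simply apply Theorem \ref{hyperbolicvsbeltrami} to the Beltrami coefficient $\mu \equiv 0 \in B(P)$. That theorem produces a hyperbolic metric $g_P \in H(P)$ whose Beltrami coefficient vanishes identically; but a metric with vanishing Beltrami coefficient is by definition conformal to the Euclidean metric on $\C$. Hence $g_P$ is the desired conformal hyperbolic metric with geodesic boundary.

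For uniqueness, suppose $g$ and $g'$ are two conformal hyperbolic metrics on $P$ with geodesic boundary. Both then have Beltrami coefficient $0$, so by the injectivity half of Theorem \ref{hyperbolicvsbeltrami} they must be equal. Alternatively, and perhaps more transparently, one can argue directly: if $g = \lambda\,|\d z|^2$ and $g' = \lambda'\,|\d z|^2$ are both hyperbolic with geodesic boundary, then the identity map $P \to P$ is a conformal map between the Riemann surfaces $(P,g)$ and $(P,g')$ (conformality only sees the conformal class, which is Euclidean in both cases), and by Lemma \ref{conformalmapsofpants} it extends to a Möbius transformation; since that Möbius transformation fixes $P$ pointwise it is the identity, and uniqueness of the hyperbolic metric in a fixed conformal class (again Theorem \ref{hyperbolicvsbeltrami}, or the uniformization argument inside its proof) forces $g = g'$.

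The only mild subtlety, which is really already absorbed into the proof of Theorem \ref{hyperbolicvsbeltrami}, is the caveat about conformal symmetries: if $P$ happens to admit a nontrivial conformal automorphism, that automorphism is automatically an isometry of $g_P$ by the uniqueness statement, so no contradiction arises. I do not expect a genuine obstacle here; the corollary is essentially a specialization of Theorem \ref{hyperbolicvsbeltrami} to the basepoint $\mu = 0$ of $B(P)$, and the main work has already been done in establishing that theorem and Lemma \ref{conformalmapsofpants}.
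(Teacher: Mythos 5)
Your proposal is correct and matches the paper's (implicit) argument: the corollary is stated without a separate proof precisely because it is the specialization of Theorem \ref{hyperbolicvsbeltrami} to the Beltrami coefficient $\mu\equiv 0$, with existence from surjectivity and uniqueness from injectivity of the homeomorphism $g\mapsto\mu$. Your alternative uniqueness argument via Lemma \ref{conformalmapsofpants} is a harmless extra, but not needed.
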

\subsection{Diffeomorphisms between hyperbolic metrics}
\paragraph{The Teichmüller space.} From now on we let $P$ be the particular planar pair of pants with $D_{out} = \lbrace |z| \l 1\rbrace, D_{left} = \lbrace |z+1/2| \l 1/4\rbrace$ and $D_{right} = \lbrace |z-1/2| \l 1/4\rbrace$.  We call the boundaries of the three disks, the outer, left and right boundaries respectively.

The space $\diffid(P)$ of smooth self-diffeomorphisms of $P$ which are smoothly isotopic to the identity (endowed with the topology of smooth convergence) acts continuously on both $B(P)$ and $H(P)$ by pushforward and, in fact, coincides with the space of diffeomorphisms which leave each boundary component invariant. See \cite{Ahlfors2006} for the definition of the action of diffeomorphisms on Beltrami coefficients.  The actions on $B(P)$ and $H(P)$ are conjugated by the homeomorphism of Theorem \ref{hyperbolicvsbeltrami}.

We say that two metrics in $H(P)$ are equivalent if one is a pushforward of the other via a diffeomorphism in $\diffid(P)$.  The space of equivalence classes $H(P)/\diffid(P)$ is by definition the \emph{Teichmüller space} $T(P)$ of the pair of pants $P$.

\paragraph{Product structure.} Given a metric $g \in H(P)$ let $L(g) = (\ell_{out}(g),\ell_{left}(g),\ell_{right}(g))$ be the triple of lengths (with respect to $g$) of the outer, left, and right boundaries respectively.   Notice that pushing forward $g$ via a boundary preserving diffeomorphism $f$ does not change the value of $L$ (i.e. $L(f_*g) = L(g)$) and hence $L$ is a well defined function on $T(P)$.  In fact $L$ is a homeomorphism between $T(P)$ and $(\R_+^{\ast})^3$ and furthermore we have the following theorem (see \cite[Section 4, Corollaries 1 and 2]{ES}):

\begin{theorem}[Product structure of the space of hyperbolic metrics]\label{productstructure}
There exists a homeomorphism $\varphi:H(P) \to (\R_+^{\ast})^3 \times \diffid(P)$ satisfying the following two properties:
\begin{enumerate}
 \item For all $g \in H(P)$ and $f \in \diffid(P)$ one has $\varphi(f_{*}g) = (L(g),f\circ h)$ where $\varphi(g) = (L(g),h)$. 
 \item The following diagram commutes:
\begin{tikzcd}
H(P) \arrow{r}{\varphi} \arrow{d}{\pi}
& (\R_+^{\ast})^3 \times \diffid(P) \arrow{d}{\pi_1}\\
T(P) \arrow{r}{L} &(\R_+^{\ast})^3
\end{tikzcd}
\end{enumerate}
\end{theorem}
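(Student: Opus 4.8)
The plan is to follow the classical Earle--Eells/Earle--Schatz strategy adapted to surfaces with boundary, using Theorem \ref{hyperbolicvsbeltrami} to transport the problem to the contractible space of Beltrami coefficients. I would first establish that the projection $\pi:H(P)\to T(P)$ admits a continuous global section $L\mapsto g_L$, assigning to each triple of lengths $L=(\ell_{out},\ell_{left},\ell_{right})\in(\R_+^\ast)^3$ a hyperbolic metric $g_L\in H(P)$ with geodesic boundary of those lengths. Once such a section exists, the map $\varphi$ can be built as follows: given $g\in H(P)$, its class in $T(P)$ is $L(g)$, and by definition of the equivalence relation there is some $f\in\diffid(P)$ with $f_*g_{L(g)}=g$; one sets $\varphi(g)=(L(g),f)$. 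The content of the theorem is then precisely that $f$ is \emph{unique} (so that $\varphi$ is well-defined) and that $\varphi$ is a homeomorphism, i.e. that the assignment $g\mapsto f$ is continuous, with continuous inverse $(L,f)\mapsto f_*g_L$.

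I would obtain the section by working through Beltrami coefficients. The space $B(P)$ is convex, hence contractible; but I need more, namely a \emph{canonical} hyperbolic metric for each prescribed boundary-length triple. For this I would use Corollary \ref{planarpantsconformalmetric}: every planar pair of pants carries a unique conformal hyperbolic metric with geodesic boundary. Parametrizing planar pairs of pants (say, by the radii and centers of $D_{left},D_{right}$ inside the unit disk $D_{out}$, modulo the Möbius transformations of $\D$) gives a finite-dimensional family, and the boundary-length triple of the associated conformal hyperbolic metric depends continuously on these parameters. A standard Fenchel--Nielsen-type argument shows this length map is a homeomorphism onto $(\R_+^\ast)^3$ (a hyperbolic pair of pants is rigidly determined by its three boundary lengths), which inverts to give a continuous choice $L\mapsto (P_L, g_{P_L})$; pulling back by a fixed conformal identification of $P_L$ with our standard $P$ produces the desired section $L\mapsto g_L\in H(P)$. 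Here one must be mildly careful about the symmetric pair of pants where $\ell_{left}=\ell_{right}$, which admits the $180^\circ$ conformal symmetry — but this is exactly the caveat already noted in the proof of Theorem \ref{hyperbolicvsbeltrami} and does not affect continuity of the section.

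The main obstacle is the \textbf{uniqueness and continuity of the "rotation part"}, i.e. showing that the stabilizer of a point of $H(P)$ in $\diffid(P)$ is trivial and that $g\mapsto f$ is continuous. Uniqueness reduces to the assertion that any $f\in\diffid(P)$ which is an isometry from $g_L$ to itself must be the identity: an isometry isotopic to the identity and fixing each boundary component of a hyperbolic pair of pants with geodesic boundary is the identity (the isometry group of a pair of pants is finite, generated by the boundary-swapping involution and, in the symmetric case, the $180^\circ$ rotation, none of which lies in $\diffid(P)$ except the identity). For continuity I would argue: if $g_n\to g$ in $H(P)$ with $\varphi(g_n)=(L(g_n),f_n)$, then $L(g_n)\to L(g)$, so $g_{L(g_n)}\to g_{L(g)}$ by continuity of the section; then $f_n=$ the isometry carrying $g_{L(g_n)}$ to $g_n$, and one shows these isometries converge smoothly using precompactness (the $f_n$ have uniformly bounded geometry since they are isometries between smoothly convergent metrics) together with the uniqueness just established to pin down the limit as the isometry carrying $g_{L(g)}$ to $g$. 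Finally, property 1 ($\varphi(f_*g)=(L(g),f\circ h)$ when $\varphi(g)=(L(g),h)$) is immediate from the construction since $f_*g = f_*(h_*g_{L(g)}) = (f\circ h)_*g_{L(g)}$ and $L(f_*g)=L(g)$, and property 2 (commutativity of the diagram) holds because $\pi_1\circ\varphi = L(\cdot) = L\circ\pi$ by definition. I would cite \cite{ES} for the technical core, since the argument there for closed surfaces adapts verbatim once the boundary-preserving/Neumann modifications are in place.
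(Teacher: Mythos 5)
Your overall strategy is sound, but it is genuinely different from what the paper does: the paper gives no proof of this theorem at all, citing \cite[Section 4, Corollaries 1 and 2]{ES}, whose argument runs through the continuous dependence of solutions of the Beltrami equation on the coefficient (Ahlfors--Bers). You instead build a global section by hand (conformal uniformization onto planar pairs of pants plus rigidity of hyperbolic pairs of pants by boundary lengths) and then recover the product structure from freeness of the $\diffid(P)$-action together with a compactness argument for the continuity of $g\mapsto f$. This trades the quasiconformal machinery for isometry rigidity; the price is that the continuity of $g\mapsto f$, which Ahlfors--Bers hands you for free, becomes the technical heart of your proof. Your sketch of it is workable but incomplete as written: you need (i) that isometries between smoothly convergent metrics on the compact $P$ are precompact in the $C^\infty$ topology (standard, via equicontinuity plus bootstrapping through the exponential maps), (ii) that $\diffid(P)$ is closed in $\dif(P)$ so that limits stay in the right component, and (iii) the triviality of the stabilizer to identify every subsequential limit with $f$. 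On that last point your description of the isometry group is slightly off: the relevant nontrivial element preserving each boundary component is the hyperelliptic involution, which exists for every hyperbolic pair of pants (not only the symmetric one); it acts as $-\mathrm{Id}$ on $H_1(P)$ and hence is not isotopic to the identity, which is what you actually need.

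There is one concrete error in the construction of the section: you cannot pull back $g_{P_L}$ by ``a fixed conformal identification of $P_L$ with our standard $P$,'' since by Lemma \ref{conformalmapsofpants} a conformal map $P\to P_L$ exists only when the two configurations are Möbius-equivalent, which they are not for general $L$ (if such a map existed, the pulled-back metric would always be the conformal metric $g_P$ of Corollary \ref{planarpantsconformalmetric}, whose boundary lengths are fixed). What you need is a continuously varying family of boundary-preserving \emph{diffeomorphisms} $h_L:P\to P_L$, chosen explicitly from the disk configurations exactly as the paper does inside the proof of Lemma \ref{symmetricmetrics}; then $g_L=h_L^{\ast}g_{P_L}$ gives the section. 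With that repair, and the continuity step fleshed out as above, your argument goes through.
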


The proof of the theorem relies on a continuity theorem for the solution of the Beltrami equation with respect to the coefficient, which itself is based on the Ahlfors-Bers theorem \cite{ahlfors-bers1960}. We will need the following consequence of the above theorem:
\begin{coro}[Canonical diffeomorphisms between equivalent metrics]
If $g,g' \in H(P)$ are equivalent then there exists a unique diffeomorphism $f_{(g,g')} \in \diffid(P)$ such that $f_{(g,g')*}g = g'$.  Furthermore the function $(g,g') \mapsto f_{(g,g')}$ is continuous.
\end{coro}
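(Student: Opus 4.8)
The plan is to obtain everything directly from the product structure homeomorphism $\varphi$ of Theorem \ref{productstructure}. Given equivalent metrics $g,g'\in H(P)$, write $\varphi(g)=(L(g),h)$ and $\varphi(g')=(L(g'),h')$ with $h,h'\in\diffid(P)$; since $g$ and $g'$ determine the same point of $T(P)$, the commutative diagram of Theorem \ref{productstructure} gives $L(g)=L(g')$, which I would record at the outset. The key point is that property (1) of that theorem says $\varphi(f_{*}g)=(L(g),f\circ h)$ for every $f\in\diffid(P)$, and since $\varphi$ is a bijection this means that, for $f\in\diffid(P)$,
\[f_{*}g=g'\quad\Longleftrightarrow\quad (L(g),f\circ h)=(L(g'),h')\quad\Longleftrightarrow\quad f\circ h=h'.\]

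This immediately yields both uniqueness and existence of $f_{(g,g')}$. Uniqueness: the last equivalence forces $f=h'\circ h^{-1}$, so there is at most one such $f$. Existence: by hypothesis there is \emph{some} $f_{0}\in\diffid(P)$ with $f_{0*}g=g'$, and the equivalence above then gives $f_{0}=h'\circ h^{-1}$; in particular $h'\circ h^{-1}$ belongs to $\diffid(P)$ and satisfies $(h'\circ h^{-1})_{*}g=g'$. So I would simply set $f_{(g,g')}=h'\circ h^{-1}$. Note that no separate verification that $h'\circ h^{-1}$ is isotopic to the identity is needed: it is a composition of two elements of $\diffid(P)$.

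It remains to check continuity of $(g,g')\mapsto f_{(g,g')}=h'\circ h^{-1}$. The maps $g\mapsto h$ and $g'\mapsto h'$ are continuous, being compositions of the homeomorphism $\varphi$ with the continuous projection $(\R_+^{\ast})^3\times\diffid(P)\to\diffid(P)$. Since $P$ is a compact surface with boundary, $\diffid(P)$ with the $C^{\infty}$ topology is a topological group, so composition and inversion are continuous; hence $(g,g')\mapsto h'\circ h^{-1}$ is continuous on $H(P)\times H(P)$, and a fortiori on the subset of equivalent pairs. I do not expect any genuine obstacle here: the entire content of the corollary is packaged in Theorem \ref{productstructure}, and the only mild point is the bookkeeping that reduces the statement to the identity $f\circ h=h'$ in the fiber over $L(g)$.
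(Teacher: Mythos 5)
Your proposal is correct and is exactly the argument the paper intends: the corollary is stated as an immediate consequence of Theorem \ref{productstructure} (with no separate proof given), and the content is precisely that $f_{(g,g')}=h'\circ h^{-1}$ in the $\diffid(P)$ factor, with continuity coming from $\varphi$ being a homeomorphism and $\diffid(P)$ being a topological group under the smooth topology.
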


We will use the diffeomorphisms above to construct identity isotopies from paths between equivalent metrics in the following way.
\begin{coro}[Paths of equivalent metrics yield isotopies]\label{isotopiesvscurvesofmetrics}
If $t \mapsto g_t$ is a continuous path of equivalent metrics then $t \mapsto f_{(g_0,g_t)} = f_t$ is the unique identity isotopy with $f_{t*}g_0 = g_t$ for all $t$.
\end{coro}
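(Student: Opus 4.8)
The plan is to deduce this immediately from the previous corollary (Canonical diffeomorphisms between equivalent metrics). Since every $g_t$ is equivalent to $g_0$, the diffeomorphism $f_t := f_{(g_0,g_t)}$ is well defined, lies in $\diffid(P)$, and satisfies $f_{t*}g_0 = g_t$ by construction. Applying the uniqueness clause of that corollary to the pair $(g_0,g_0)$ and the identity map gives $f_0 = f_{(g_0,g_0)} = \mathrm{id}$. Moreover $t \mapsto (g_0,g_t)$ is continuous into $H(P)\times H(P)$ and $(g,g') \mapsto f_{(g,g')}$ is continuous into $\diffid(P)$, so the composite $t \mapsto f_t$ is a continuous path in $\diffid(P)$ starting at the identity; unwinding the definition of the topology of smooth convergence, this is precisely the statement that $(s,x) \mapsto f_s(x)$ varies continuously in the smooth topology, i.e. that $(f_t)$ is an identity isotopy of $P$.

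For uniqueness, suppose $(h_t)$ is another identity isotopy with $h_{t*}g_0 = g_t$ for every $t$. Since $h_0 = \mathrm{id}$ and the restriction of the isotopy to $[0,t]$ exhibits $h_t$ as smoothly isotopic to the identity, we have $h_t \in \diffid(P)$ for all $t$. Thus $h_t$ is a diffeomorphism in $\diffid(P)$ carrying $g_0$ to $g_t$, and the uniqueness part of the previous corollary forces $h_t = f_{(g_0,g_t)} = f_t$. Hence $(h_t) = (f_t)$, which completes the proof.

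I do not expect a genuine obstacle here: the statement is essentially a restatement of the previous corollary along a continuous path of second arguments. The only point requiring a little care is the bookkeeping identification between an \emph{identity isotopy of $P$} and a continuous path in $\diffid(P)$ issuing from the identity — for this one uses that $\diffid(P)$ is by definition the group of diffeomorphisms smoothly isotopic to the identity, together with the fact (recorded just before Theorem~\ref{productstructure}) that these are exactly the boundary-component-preserving diffeomorphisms, so that the isotopies produced do not accidentally leave the relevant space.
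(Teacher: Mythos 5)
Your proof is correct and follows exactly the route the paper intends: the corollary is stated without proof as an immediate consequence of the preceding corollary on canonical diffeomorphisms (existence, uniqueness, and continuity of $(g,g')\mapsto f_{(g,g')}$), and you have simply filled in the routine bookkeeping between identity isotopies of $P$ and continuous paths in $\diffid(P)$ issuing from the identity.
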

\subsection{Boundary admissible hyperbolic metrics}
We say that a metric $g \in H(P)$ on the pair of pants $P$ is \emph{boundary admissible} (or just \emph{admissible}) if it is standard around each boundary of $P$.  

Using the Massage procedure described in Section \ref{massageannuli} near the boundary of $P$, it is possible to deform any hyperbolic metric in $P$ with geodesic boundary to an admissible one.

\begin{theorem}\label{admissiblehomotopy}
There exists a continuous function $F:[0,1]\times H(P) \to H(P)$ such that
\begin{enumerate}
 \item For all $g \in H(P)$ one has that $F(0,g) = g$, $F(1,g) \in A(P)$, and $F(t,g)$ is equivalent to $g$ for all $t \in [0,1]$.
 \item If $g \in A(P)$ then $F(t,g) \in A(P)$ for all $t \in [0,1]$.
 \item It is symmetric: for all $g \in H(P)$, we have $R_* F(t,g) = F(t,R_*g)$.
\end{enumerate}
\end{theorem}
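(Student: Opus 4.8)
The plan is to apply the Massage Lemma (Theorem \ref{massagelemma}) simultaneously near each of the three boundary components of $P$, using the three affine identifications of a neighborhood of each boundary circle of $P$ with the standard annulus $\A$, and then to promote the resulting path of metrics on $P$ to a path through $H(P)$ via the equivalence between hyperbolic metrics with geodesic boundary and Beltrami coefficients.

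\medskip
\noindent\textbf{Construction near each boundary.} First I would fix the three affine maps $\iota_{out}(z)=z$, $\iota_{left}(z)=z/4-1/2$, $\iota_{right}(z)=z/4+1/2$, each of which sends the unit circle to the corresponding boundary circle of $P$ and is conformal. Given $g\in H(P)$, each boundary circle is a $g$-geodesic, so pulling $g$ back by $\iota_\bullet$ produces a metric in $H(\A)$ (after restricting to a suitable neighborhood of the unit circle; the disks bounding $P$ are disjoint and compactly contained, so the three neighborhoods can be taken pairwise disjoint). Apply the Massage Lemma to each of these three pulled-back metrics, obtaining identity isotopies $F^{out}_{s,g}, F^{left}_{s,g}, F^{right}_{s,g}$ of $\A$, each equal to the identity outside $\A'=\{0.91<|z|<1.09\}$. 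Transporting back by $\iota_\bullet$ and gluing (using that each isotopy is the identity outside a small collar, and that the collars are disjoint and miss a neighborhood of the ``middle'' of $P$), one gets a single identity isotopy $G_{s,g}:P\to P$ of $P$, supported in the union of the three collars. Define $\widehat F(s,g)=(G_{s,g})^*g$. By construction $\widehat F(0,g)=g$, and by Property 2 of the Massage Lemma $\widehat F(1,g)$ is standard around each of the three boundaries, i.e. lies in $A(P)$. Properties 1 (support), 2 (standard metrics stay standard, since the Massage isotopy is trivial near $C$ when $g$ is already standard), and 4 (continuity in the smooth topology) of Theorem \ref{massagelemma} give the corresponding properties for $\widehat F$; for the third item, one uses Property 3 of the Massage Lemma together with the observation that the rotation $R(z)=-z$ exchanges the left and right collars and preserves the outer collar, and that it conjugates $\iota_{left}$ with $\iota_{right}$ (up to the affine scaling) and $\iota_{out}$ with itself, so the three isotopies get permuted compatibly and $R_*\widehat F(s,g)=\widehat F(s,R_*g)$.

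\medskip
\noindent\textbf{Staying inside $H(P)$ and equivalence.} The metrics $\widehat F(s,g)$ are hyperbolic (being pullbacks of $g$ by diffeomorphisms) with geodesic boundary (each $G_{s,g}$ fixes the boundary circles pointwise and the Massage isotopies keep the unit circle geodesic), so $\widehat F(s,g)\in H(P)$ for all $s$; and since $G_{s,g}\in\diffid(P)$, the metric $\widehat F(s,g)$ is equivalent to $g$ in the sense of the Teichmüller equivalence for $P$. Hence setting $F(t,g)=\widehat F(t,g)$ already gives a function with all the required properties, provided continuity in both variables holds. Continuity in $g$ follows from Property 4 of the Massage Lemma (each $F^\bullet_{s,g}$ depends continuously on $g$ in the smooth topology, hence so does $G_{s,g}$, hence so does $(G_{s,g})^*g$); joint continuity in $(s,g)$ follows because the Massage isotopies were constructed to be continuous in $s$ as well (they are identity isotopies $F_{s,g}$, i.e. continuous in $s\in[0,1]$).

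\medskip
\noindent\textbf{Main obstacle.} The delicate point is the gluing and the joint continuity: one must check that the three localized isotopies genuinely patch to a globally defined identity isotopy of $P$ that varies continuously (in the smooth topology, jointly in $s$ and $g$) — this requires that the collars on which the Massage isotopies are nontrivial are uniformly separated from the complement disks of $P$ and from each other, which holds because the bump function $\rho$ confines everything to $\A'=\{0.91<|z|<1.09\}$ and the affine images of $\A'$ under $\iota_{left},\iota_{right}$ sit strictly inside $D_{out}$ and are disjoint. A second point needing care is verifying the symmetry relation in item 3 precisely: one must track how $R$ acts on each of the three affine charts and invoke Property 3 of the Massage Lemma in the outer chart (where $R$ acts as a rotation of $\A$ of the type covered by the Massage Lemma's symmetry) and compare the left and right charts (which $R$ interchanges), concluding $R_*F(t,g)=F(t,R_*g)$. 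The rest is routine bookkeeping with the already-established Theorems \ref{massagelemma} and \ref{hyperbolicvsbeltrami}.
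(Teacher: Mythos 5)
Your proposal is correct and follows essentially the same route as the paper: apply the Massage Lemma in disjoint collars of the three boundary circles via the affine maps $h_{\pm}:z\mapsto z/4\pm 1/2$, glue using the disjointness of the supports, and deduce item 3 from the conjugation relation $R\circ h_-=h_+\circ R$ together with Property 3 of the Massage Lemma. Two small imprecisions (not gaps): the pullback of $g$ lives only on a semi-annulus, which the paper handles by noting that the Massage isotopy preserves the semi-annuli $\A^{\pm}$; and $G_{s,g}$ preserves each boundary circle only setwise, not pointwise, which still suffices for the circles to remain geodesics.
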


\begin{proof}
This theorem follows directly from the Massage Lemma (see Theorem \ref{massagelemma}). Indeed, recall that we stated $\A=\{z\in\C:\,0.9<|z|<1.1\}$. The constants have been chosen in such a way that $\A$ and its images by the affine maps $h_{\pm}:z\mapsto z/4\pm 1/2$ (which transform the outer boundary of $P$ in the other boundary components) of $\A$ form three disjoint neighborhoods of the boundary components of $P$.

Now since the isotopy constructed in Theorem \ref{massagelemma} preserves the unit circle, it also preserves the two semi-annuli $\A^+=\{z\in \A:\,|z|\geq 1\}$ and $\A^-=\{z\in \A:\,|z|\leq 1\}$. Hence, we can perform the isotopy in the semi-annuli around the boundary components, in order to get the desired function $F$.

It remains to check that the procedure we described is symmetric. The metric $g$ yields one hyperbolic metric in $\A^-$, and two on $\A^+$  that we denote by $g_{out},g_{left}$ and $g_{right}$ (the last two correspond respectively to the pullback by $h_-$ and $h_+$ of $g$ near the left, right leg).

Note that $R\circ h_-=h_+\circ R$: it comes that the corresponding objects for $R_*g$ are $R_* g_{out}$, $R_*g_{right}$ and $R_* g_{left}$.

Using the symmetry of the maps $A(s,.)$ (see Theorem \ref{massagelemma}), we get the symmetry of the procedure.

\end{proof}

%

\subsection{Admissible and symmetric sections of Teichmüller space}
\paragraph{Global sections of Teichmüller space.} By a \emph{continuous global section} of $T(P)$ we mean a continuous mapping from $(\R_+^{\ast})^3$ to $H(P)$ associating to each triple one of its preimages with respect to $L\circ \pi$ (where $\pi$ is the quotient projection from $H(P)$ to $T(P)$).  The existence of continuous global sections follows immediately from Theorem \ref{productstructure}.

\begin{coro}[Continuous global sections]\label{globalsection}
There exists a continuous global section of $T(P)$.
\end{coro}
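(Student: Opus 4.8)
The plan is to read this corollary as an immediate consequence of Theorem \ref{productstructure}. That theorem provides a homeomorphism $\varphi: H(P) \to (\R_+^{\ast})^3 \times \diffid(P)$ which is compatible with the length map $L$ and the quotient projection $\pi$, in the sense that $\pi_1 \circ \varphi = L \circ \pi$ where $\pi_1$ is the projection onto the first factor. A continuous global section of $T(P)$ is precisely a continuous map $s: (\R_+^{\ast})^3 \to H(P)$ with $L\circ\pi\circ s = \mathrm{Id}$.

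First I would fix once and for all the identity element $e \in \diffid(P)$ (the identity diffeomorphism of $P$) and define the map $s: (\R_+^{\ast})^3 \to H(P)$ by
\[s(\ell) = \varphi^{-1}(\ell, e).\]
This is continuous, being the composition of the obviously continuous inclusion $\ell \mapsto (\ell,e)$ with the homeomorphism $\varphi^{-1}$. It remains to check that $s$ lands in the correct fiber, i.e. that $L(\pi(s(\ell))) = \ell$ for every $\ell \in (\R_+^{\ast})^3$. This follows from the commuting diagram in property 2 of Theorem \ref{productstructure}: we have $L(\pi(s(\ell))) = \pi_1(\varphi(s(\ell))) = \pi_1(\ell,e) = \ell$. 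Hence $s$ is a continuous global section of $T(P)$.

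There is essentially no obstacle here; the corollary is a packaging statement and the entire content is in Theorem \ref{productstructure}. The only thing worth a sentence is the verification that $s(\ell)$ is genuinely a preimage of $\ell$ under $L\circ\pi$, which as noted is exactly the commutativity of the square. (If one instead wanted a section through a prescribed basepoint metric $g_0$ with $L(g_0)$ lying over a chosen point, one would conjugate by the corresponding element of $\diffid(P)$ using property 1, but this refinement is not needed for the statement as given.)
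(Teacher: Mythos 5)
Your proof is correct and follows exactly the route the paper intends: the paper simply asserts that the corollary "follows immediately from Theorem \ref{productstructure}", and your construction $s(\ell)=\varphi^{-1}(\ell,e)$ together with the check via the commuting square is the natural way to make that immediate deduction explicit.
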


\paragraph{Symmetric sections.} We will need to improve the above result.   Let $R:P \to P$ be the rotation of angle $\pi$ (i.e. $R(z) = -z$), we say that a section $\ell = (\ell_{out},\ell_{left},\ell_{right}) \mapsto g_{\ell}$ is \emph{symmetric} if $g_{\ell} = R_* g_{\sigma(\ell)}$ for all $\ell$, where we define $\sigma(\ell_{out},\ell_{left},\ell_{right}) = (\ell_{out},\ell_{right},\ell_{left})$ for all $\ell \in (\R_+^{\ast})^3$.

In particular for a symmetric section $\ell \mapsto g_{\ell}$ each metric of the form $g_{(\ell_{out},t,t)}$ must have $R$ as a self-isometry.

\begin{theorem}[Continuous symmetric global section]\label{symmetricglobalsection}
There exists a continuous symmetric global section of $T(P)$.
\end{theorem}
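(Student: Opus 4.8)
The plan is to start from the continuous global section furnished by Corollary \ref{globalsection} and then symmetrize it using the rotation $R$ and the admissibility homotopy of Theorem \ref{admissiblehomotopy}. The naive symmetrization $\ell \mapsto \frac{1}{2}(g_\ell + R_* g_{\sigma(\ell)})$ makes no sense here because $H(P)$ is not convex (averaging two hyperbolic metrics need not give a hyperbolic metric, and certainly not one with geodesic boundary). So instead I would pass to Beltrami coefficients via Theorem \ref{hyperbolicvsbeltrami}, where the space $B(P)$ \emph{is} convex, perform the averaging there, and then transport the result back to $H(P)$.

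\medskip
\noindent\textbf{Key steps.} First, fix a continuous global section $\ell \mapsto g_\ell$ of $T(P)$ (Corollary \ref{globalsection}), and apply the admissibility homotopy $F$ of Theorem \ref{admissiblehomotopy} at time $1$ to replace it by a section whose metrics are boundary admissible; call it $h_\ell := F(1,g_\ell)$. It remains continuous, a section of $T(P)$, and lands in $A(P)$. Second, for each $\ell$ let $\mu_\ell \in B(P)$ be the Beltrami coefficient of $h_\ell$ (continuous in $\ell$ by Theorem \ref{hyperbolicvsbeltrami}); note that the Beltrami coefficient of $R_* h_{\sigma(\ell)}$ is the pull-back $R^* \mu_{\sigma(\ell)}$ (here one uses that $R$ is anti-holomorphic-free, i.e. $R(z)=-z$ is conformal, so $R$ acts on Beltrami coefficients simply by $\mu \mapsto \mu\circ R \cdot \overline{R'}/R' = \mu\circ R$). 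Form the convex combination $\nu_\ell := \tfrac12\mu_\ell + \tfrac12 (R^*\mu_{\sigma(\ell)}) \in B(P)$, which is continuous in $\ell$ and satisfies the exact symmetry $R^* \nu_{\sigma(\ell)} = \nu_\ell$. Third, let $G_\ell \in H(P)$ be the unique hyperbolic metric with geodesic boundary and Beltrami coefficient $\nu_\ell$ (Theorem \ref{hyperbolicvsbeltrami} again); by uniqueness of the hyperbolic representative in a conformal class, $R_* G_{\sigma(\ell)}$ is the hyperbolic metric with Beltrami coefficient $R^*\nu_{\sigma(\ell)} = \nu_\ell$, hence equals $G_\ell$ — so $\ell \mapsto G_\ell$ is symmetric and continuous. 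The one remaining problem is that $\ell \mapsto G_\ell$ need no longer be a \emph{section}: its boundary lengths $L(G_\ell)$ need not equal $\ell$. To fix this, compose with the product-structure homeomorphism $\varphi$ of Theorem \ref{productstructure}: for each target length triple $\ell$, let $\ell'$ be the unique triple with $L(G_{\ell'}) = \ell$ — here I need that $\ell \mapsto L(G_\ell)$ is a homeomorphism of $(\R_+^*)^3$ commuting with $\sigma$ (it commutes with $\sigma$ because $R$ swaps the left and right boundaries, and it is a homeomorphism because it is a continuous bijection between copies of $(\R_+^*)^3$, the bijectivity following from the fact that the conformal classes $\nu_\ell$ range over a set on which length is monotone in an appropriate sense; more cleanly, I would build the section differently so this step is automatic, see below) — and set $\tilde g_\ell := G_{\ell'}$.

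\medskip
\noindent\textbf{A cleaner route, and the main obstacle.} To avoid the delicate surjectivity/monotonicity issue in the last step, I would instead parametrize from the start by Beltrami coefficients rather than by lengths: use $\varphi^{-1}$ from Theorem \ref{productstructure} to identify $H(P) \cong (\R_+^*)^3 \times \diffid(P)$, so a section of $T(P)$ is precisely the choice, for each $\ell$, of a metric whose length triple is $\ell$; equivalently, by Corollary \ref{planarpantsconformalmetric} and scaling/uniformization, each Teichmüller class is hit by a conformal-type normalization. The real content — and the step I expect to be the main obstacle — is precisely the interplay between the averaging in $B(P)$ and the length normalization: I must produce, continuously in $\ell$, a hyperbolic metric that is \emph{both} exactly $R$-symmetric under the $\sigma$-swap \emph{and} has prescribed boundary lengths $\ell$. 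The symmetry comes for free from convex averaging in $B(P)$ plus uniqueness of hyperbolic representatives; the length normalization requires inverting the continuous map "(conformal structure) $\mapsto$ (length triple of its hyperbolic representative)" in an $R$-equivariant way. I would handle this by a fixed-point / continuity argument: on the symmetric locus $\{\ell : \sigma(\ell) = \ell\}$ the map $\ell \mapsto L(G_\ell)$ already commutes with nothing nontrivial and is a homeomorphism by Theorem \ref{productstructure} applied to $R$-invariant data, and one extends off the diagonal by the same inversion, checking continuity via the continuity clauses in Theorems \ref{hyperbolicvsbeltrami}, \ref{productstructure} and \ref{admissiblehomotopy}. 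The admissibility of the output is then recovered by one final application of the symmetric homotopy $F$ of Theorem \ref{admissiblehomotopy}, whose time-$1$ map preserves $A(P)$ and commutes with $R$, so the final section $\ell \mapsto F(1,\tilde g_\ell)$ is continuous, symmetric, admissible, and a genuine global section.
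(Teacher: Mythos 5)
Your symmetrization idea --- average Beltrami coefficients, where convexity is available, then pass to hyperbolic representatives --- does produce a continuous family $\ell\mapsto G_\ell$ with $G_\ell=R_*G_{\sigma(\ell)}$, and your computation of the action of $R$ on Beltrami coefficients is correct. But the step you yourself flag as ``the main obstacle'' is a genuine gap, not a technicality. After averaging there is no control on the length triple $L(G_\ell)$: the composite $\ell\mapsto\nu_\ell\mapsto L(G_\ell)$ is merely continuous, and nothing in Theorem \ref{hyperbolicvsbeltrami} or Theorem \ref{productstructure} gives injectivity, surjectivity or properness of this map, so ``let $\ell'$ be the unique triple with $L(G_{\ell'})=\ell$'' is unjustified. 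The appeal to ``monotonicity in an appropriate sense'' and to ``a fixed-point / continuity argument'' does not supply an argument, and without the reparametrization your family is symmetric but is not a section. (The admissibility manipulations are harmless but also irrelevant: the statement at hand does not require admissibility, which is only added in Theorem \ref{admissiblesymmetricglobalsection}.)

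The paper's proof sidesteps the conflict between symmetry and length normalization entirely. The constraint $g_\ell=R_*g_{\sigma(\ell)}$ only couples $\ell$ with $\sigma(\ell)$; off the diagonal $S=\lbrace \ell_{left}=\ell_{right}\rbrace$ these are distinct points, so one chooses the section freely on the half-space $\lbrace\ell_{left}\ge\ell_{right}\rbrace$ (using the product structure of Theorem \ref{productstructure} and a retraction onto $S$) and defines it on the other half-space by the reflection formula. The only real work is on $S$, where the constraint forces $R$ to be an isometry of $g_\ell$. There one (i) proves that every triple $(\ell_{out},t,t)$ is realized by an $R$-invariant hyperbolic metric (Lemma \ref{symmetricmetrics}, via the Beltrami equation, Koebe's theorem, and the rigidity of conformal maps between planar pairs of pants from Lemma \ref{conformalmapsofpants}, which forces the uniformized pair of pants to be $R$-symmetric), and (ii) invokes the equivariant Earle--Schatz theory to see that $H(P)^R\to T(P)^R$ is a principal $\diffid(P)^R$-bundle over the contractible base $S$, hence trivial, hence admits a section. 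If you want to rescue the averaging approach, you would have to prove that $\ell\mapsto L(G_\ell)$ is a homeomorphism of $(\R_+^{\ast})^3$ commuting with $\sigma$; that is a substantial claim needing its own proof, and the construction does not become easier than the paper's.
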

\begin{proof}
Let $H(P)^R$ be the subset of $H(P)$ consisting of metrics for which $R$ is an isometry, $T(P)^R$ be the subset of $T(P)$ consisting of metrics which have an isometry isotopic to $R$, and $\diffid(P)^R$ be the subset of $\diffid(P)$ consisting of diffeomorphisms which commute with $R$.  By \cite[Theorems 5B,5C, and 5D]{ES} when endowed with the projection $\pi:H(P)^R \to T(P)^R$ the space $H(P)^R$ is a principal $\diffid(P)^R$ fiber bundle.

Any principal bundle over a contractible base is trivial.  Hence, it suffices to show that $T(P)^R$ is homeomorphic via $L$ to $S = \lbrace (\ell_{out},\ell_{left},\ell_{right}) \subset (\R_+^{\ast})^3: \ell_{left} = \ell_{right}\rbrace$ to conclude that $L\circ \pi:H(P)^R \mapsto S$ admits admits a global continuous section.

Clearly the image of $T(P)^R$ under $L$ is included in the subspace $S$.  To see that in fact $L(T(P)^R) = S$ one must construct for each triple $\ell = (\ell_{out},t,t)$ a hyperbolic metric $g$ on $P$ with $L(\pi(g)) = \ell$ and such that $R$ is an isometry for $g$.  We postpone this until Lemma \ref{symmetricmetrics}.

Given a global section $(\ell_{out},t,t) \mapsto g_{(\ell_{out},t,t)}$ of $T(P)^R$ one can use the homeomorphism of Theorem \ref{productstructure} to identify it with a continuous function from $S$ to $\diffid(P)$.   Any such function can be extended continuously to the half space $\lbrace \ell_{left} \ge \ell_{right}\rbrace \subset (\R_+^{\ast})^3$  by composing it with a retraction (e.g. the orthogonal projection) from the half space to $S$.  After this, one may extend it continuously to all of $(\R_+^{\ast})^3$ by symmetry (i.e. defining $g_{(\ell_{out},\ell_{left},\ell_{right})} = R^*g_{(\ell_{out},\ell_{right},\ell_{left})}$) thus obtaining a continuous symmetric global section.
\end{proof}

\paragraph{Construction of symmetric metrics.} We now show how to construct symmetric metrics on $P$ for each Teichmüller class which might admit one.
\begin{lemma}\label{symmetricmetrics}
For each $\ell = (\ell_{out},t,t) \in (\R_+^{\ast})^3$ there exist a metric $g \in H(P)$ with $L(\pi(g)) = \ell$ and such that $R$ is an isometry for $g$.
\end{lemma}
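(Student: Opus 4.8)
The idea is to build the metric directly as the quotient of a symmetric metric on a double cover. Since $\ell_{left} = \ell_{right} = t$, the rotation $R$ swaps the two inner boundary components, so we want a metric invariant under $R$. The plan is to pass to the quotient orbifold $P/R$ and build a hyperbolic metric there with the appropriate cone point, then pull it back.

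\textbf{Step 1: identify the quotient.} The rotation $R$ acts on $P$ with a single fixed point $p_0 = 0$ (the center), swapping $D_{left}$ and $D_{right}$ and preserving $D_{out}$. The quotient $P/R$ is topologically a disk with one boundary circle (the image of $\partial D_{out}$, which is a circle since $R$ acts freely there and is an honest double cover) and one additional boundary circle (the common image of $\partial D_{left}$ and $\partial D_{right}$), together with one orbifold point of order $2$ at the image of $p_0$. In other words $P/R$ is a pair of pants in the orbifold sense, or more precisely an annulus with one $\Z/2$ cone point.

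\textbf{Step 2: build a hyperbolic metric on the quotient orbifold.} One needs a complete-boundary hyperbolic metric on this orbifold with geodesic boundary circles, where the outer boundary has length $\ell_{out}$ and the inner boundary has length $t$ (the boundary length downstairs equals the length of a single inner boundary of $P$ upstairs, since each inner boundary of $P$ maps homeomorphically to the inner boundary downstairs), and with a cone point of angle $\pi$. Such orbifolds with prescribed geodesic boundary lengths exist and are unique up to isometry — this is the standard existence statement for hyperbolic structures on orbifolds of negative Euler characteristic with geodesic boundary (one can, for instance, double along the boundary to get a closed hyperbolic orbifold and invoke the uniformization theorem for orbifolds, or build it explicitly by gluing two hyperbolic right-angled hexagons-with-a-cone-point). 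Alternatively, one can invoke Theorem \ref{hyperbolicvsbeltrami} and Lemma \ref{symmetricmetrics}'s setting: put a smooth $R$-invariant conformal structure on $P$ (the Euclidean one works, since $R$ is a Euclidean isometry), so by Corollary \ref{planarpantsconformalmetric} the unique conformal hyperbolic metric $g_P$ with geodesic boundary is automatically $R$-invariant by uniqueness (if $R_* g_P$ is another conformal hyperbolic metric with geodesic boundary it must equal $g_P$). This gives \emph{one} symmetric metric, with some length triple $(\ell_{out}^0, t_0, t_0)$, but not an arbitrary one.

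\textbf{Step 3: adjust the lengths.} To get an arbitrary triple $(\ell_{out}, t, t)$ one deforms within the symmetric locus. The cleanest route: work on the quotient orbifold and use that the Fenchel–Nielsen / boundary-length parametrization of hyperbolic structures on a fixed orbifold surjects onto all admissible length data; equivalently, cut the orbifold along a geodesic arc joining the two boundary circles to obtain a ``hexagon with a cone point'' whose side lengths can be chosen freely subject to the right-angle constraints, then reglue. Pulling the resulting metric back along the branched cover $P \to P/R$ produces the desired $g \in H(P)$ with $L(\pi(g)) = (\ell_{out}, t, t)$ for which $R$ is an isometry by construction. One must check smoothness of the pullback across the single fixed point $p_0$: away from $p_0$ the covering is a local isometry so smoothness is automatic, and near $p_0$, where the cone angle downstairs is exactly $\pi$, the degree-$2$ branched cover smooths out the cone point into an ordinary smooth point (this is the familiar fact that a hyperbolic cone of angle $\pi$ is the quotient of a smooth hyperbolic disk by an isometric involution). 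Hence $g$ extends smoothly across $p_0$.

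\textbf{Main obstacle.} The substantive point is Step 2–3: producing a hyperbolic structure on the quotient orbifold with \emph{prescribed} boundary lengths and the order-$2$ cone point, and verifying the pullback is genuinely smooth (not merely continuous or $C^1$) at the branch point $p_0$. The boundary-length prescription is handled by the orbifold version of the pants/hexagon construction, and the smoothness at $p_0$ follows from the local model $\{$hyperbolic disk$\}/\{z \mapsto -z\}$ being a hyperbolic cone of angle $\pi$; unwinding this identification shows the pulled-back metric is real-analytic near $p_0$. Everything else — the topology of $P/R$, the $R$-invariance from uniqueness of the conformal hyperbolic metric, the behavior of lengths under the $2$-to-$1$ map on inner boundaries versus the honest $2$-to-$1$ cover on the outer boundary — is routine bookkeeping.
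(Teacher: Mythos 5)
Your argument is correct in substance but follows a genuinely different route from the paper. You quotient by $R$ and uniformize the resulting orbifold (an annulus with one cone point of angle $\pi$) with prescribed geodesic boundary lengths, then pull back along the branched double cover; the paper instead starts from \emph{any} metric $g$ with $L(\pi(g))=(\ell_{out},t,t)$, uses the Beltrami equation and Koebe's theorem to uniformize $(P,g)$ onto a round planar pair of pants $Q$, and deduces that $Q$ must itself be symmetric ($r_1=r_2$) because the Teichm\"uller class of $g$ admits a left--right symmetry and, by Lemma \ref{conformalmapsofpants}, any conformal self-map of $Q$ exchanging the legs is a M\"obius transformation; it then transports $g_Q$ back to $P$ by an $R$-equivariant boundary-preserving diffeomorphism. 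Your approach buys a more constructive and self-contained existence proof (no appeal to Ahlfors--Bers or to the rigidity of planar pants), at the cost of invoking hyperbolic structures on orbifolds with cone points; the paper's approach stays entirely within planar conformal geometry and reuses machinery it has already set up. Three small points to tighten in your write-up: (i) the outer boundary of $P$ double covers the outer boundary of $P/R$, so downstairs you must prescribe length $\ell_{out}/2$, not $\ell_{out}$ (your parenthetical handles only the inner boundary); (ii) the existence of the orbifold metric with prescribed boundary lengths is most cleanly obtained not from a ``hexagon with a cone point'' but from the standard fact that the hyperbolic pair of pants with cuff lengths $(\ell_{out},t,t)$, built from two congruent right-angled hexagons, carries an orientation-preserving isometric involution swapping the two equal cuffs --- equivalently, its quotient is exactly the orbifold you want; (iii) you still need to identify the abstract hyperbolic orbifold with the concrete quotient $P/R$ by a diffeomorphism matching boundary components and the cone point, which is the analogue of the paper's final step of building an $R$-commuting diffeomorphism onto $P$; it is routine but should be stated. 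Your treatment of smoothness of the pullback at the fixed point $0$ via the angle-$\pi$ cone model is the right one.
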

\begin{proof}
Let $g \in H(P)$ be any preimage of $\ell$ and let $\mu$ be its Beltrami coefficient.  By solving the Beltrami equation $\partial_{\bar{z}} f = \mu \partial_z f$ for any smooth extension of $\mu$ to the entire plane one can map $P$ endowed with the conformal structure given by $\mu$ conformally onto a planar domain $\Omega$ bounded by smooth curves with the conformal structure coming from $\C$.  Koebe's theorem implies that $\Omega$ can be mapped conformally onto a planar pair of pants $Q$.

Hence we have obtained a diffeomorphism $f:P \to Q$ which is an isometry between $(P,g)$ and a planar pair of pants $Q$.  Since, by Corollary \ref{planarpantsconformalmetric}, $Q$ admits a unique conformal hyperbolic metric with geodesic boundary $g_Q$ it must be the case that $f_*g = g_Q$.   Composing with a Möbius transformation we can assume that $Q$ is defined by $D_{out} = \lbrace |z| \l 1\rbrace, D_{left} = \lbrace |z+x| \l r_1\rbrace$ and $D_{right} = \lbrace |z-x| \l r_2\rbrace$ for some $x \in (0,1)$ and $r_1,r_2 \l 1$.

We claim that $r_1 = r_2$.  Notice that the metric $g$ on $P$ is equivalent to its pushforward $R_*g$ under the rotation of angle $\pi$.  Hence $(P,g)$ admits a self isometry which exchanges the left and right boundaries.   This in turn, implies that there is a conformal self-mapping of $Q$ which exchanges the left and right boundaries.  But by Lemma \ref{conformalmapsofpants} this self-mapping must be a Möbius transformation of the unit disk and the only way this is possible is if $r_1 = r_2$ and the conformal self mapping is $R(z) = -z$.

It is possible to construct a boundary preserving diffeomorphism $f:Q \to P$ which commutes with $R$.  To see this notice that it is clearly possible to define an isotopy on the boundary circle of $Q$ centered at $x$ with radius $r$ deforming it to the boundary circle of $P$ centered at $1/2$ with radius $1/4$.  We assume such an isotopy has been chosen so that the image of the initial circle never leaves the region $\lbrace (u,v) \in \R^2: u \ge \min(x-r,1/4)/2 , |u+iv| \le (\max(x+r,3/4)+1)/2\rbrace$.   Using the isotopy extension theorem one can extend this to an isotopy of the right half plane which is the identity outside of the previously defined region.  This istopy can then be extended to the left half plane in such a way that the resulting isometry commutes with $R$.  The endpoint of the the isotopy is the required diffeomorphism.

Given any such diffeomorphism the pushforward metric $f_{*}g_Q$ is isometric to $g$ via a boundary preserving (and hence isotopic to the identity) diffeomorphism and is $R$-invariant.
\end{proof}

\paragraph{Admissible and symmetric sections.} A section of $T(P)$ is said to be \emph{admissible} if it only takes values in the set $A(P)$ of admissible metrics.  Using the procedure described in Theorem \ref{admissiblehomotopy}, the following result follows:

\begin{theorem}\label{admissiblesymmetricglobalsection}
There exists a continuous admissible and symmetric global section of $T(P)$.
\end{theorem}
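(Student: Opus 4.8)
The plan is to combine the continuous symmetric global section of Theorem \ref{symmetricglobalsection} with the deformation procedure of Theorem \ref{admissiblehomotopy}. Start with a continuous symmetric global section $s_0 : (\R_+^{\ast})^3 \to H(P)$, $\ell \mapsto g_\ell$, so that $g_\ell = R_* g_{\sigma(\ell)}$ for all $\ell$. Then define a new section by flowing each metric to time $1$ under the homotopy $F$ of Theorem \ref{admissiblehomotopy}, i.e. set
\[
s_1(\ell) = F(1, g_\ell).
\]
This is continuous as a composition of continuous maps. By Property 1 of Theorem \ref{admissiblehomotopy}, $F(1,g_\ell) \in A(P)$, so the new section is admissible. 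By Property 1 again, $F(1,g_\ell)$ is equivalent to $g_\ell$, hence lies over the same point of $T(P)$; since $L$ is invariant under the $\diffid(P)$-action one has $L(\pi(F(1,g_\ell))) = L(\pi(g_\ell)) = \ell$, so $s_1$ is indeed a global section of $T(P)$.

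It remains to check symmetry. For each $\ell$ we compute, using the symmetry of the original section and Property 3 of Theorem \ref{admissiblehomotopy},
\[
s_1(\ell) = F(1, g_\ell) = F(1, R_* g_{\sigma(\ell)}) = R_* F(1, g_{\sigma(\ell)}) = R_* s_1(\sigma(\ell)),
\]
which is exactly the defining relation for a symmetric section. Thus $s_1$ is a continuous admissible and symmetric global section of $T(P)$, as claimed.

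There is essentially no obstacle here: the theorem is a bookkeeping corollary that records that the two previously established tools (the symmetric section from Theorem \ref{symmetricglobalsection}, and the admissibility-producing deformation from Theorem \ref{admissiblehomotopy}) are compatible, namely that $F$ preserves the fiber structure (Property 1), lands in $A(P)$ (Property 1), and is $R$-equivariant (Property 3). The only point that requires a word of care is the identification $L(\pi(F(t,g))) = L(\pi(g))$, which follows because $F(t,g)$ is equivalent to $g$ and the boundary-length triple $L$ is constant on $\diffid(P)$-orbits; everything else is formal. One could alternatively phrase the argument as: post-composing any global section with the map $g \mapsto F(1,g)$ carries continuous sections to continuous admissible sections and preserves the property of being symmetric, and then invoke Theorem \ref{symmetricglobalsection} for the existence of a symmetric one to start from.
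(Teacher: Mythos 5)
Your proposal is correct and is exactly the argument the paper intends: the paper's proof of this theorem is a one-line appeal to the procedure of Theorem \ref{admissiblehomotopy} applied to the section of Theorem \ref{symmetricglobalsection}, and you have simply written out the (routine) verification that $F(1,\cdot)$ preserves Teichmüller classes, lands in $A(P)$, and respects the $R$-symmetry.
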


\subsection{Homotopy to an admissible and symmetric section}
Our final task in this section is to establish the existence of a special section of Teichmüller space, which is not only admissible and symmetric, but comes with a procedure for deforming any given metric on $P$ to an equivalent metric in the section.   The deformation procedure respects symmetry under $R$ and is continuous with respect to the initial metric.

This special section of $T(P)$ will be useful both for the construction of a large family of `model hyperbolic metrics' on the Hirsch foliation (which we will carry out in the next section), and for deforming any given hyperbolic in the foliation to a model metric (at least in certain special cases).

\begin{theorem}\label{theperfectsection}
There exists a continuous homotopy $H:[0,1]\times H(P) \to H(P)$ starting at the identity and satisfying the following properties:
\begin{enumerate}
 \item For all $t \in [0,1]$ and $g \in H(P)$ the metric $H(t,g)$ is equivalent to $g$.
 \item For all $t \in [0,1]$ and $g \in H(P)$ one has $R_{*}H(t,g) = H(t,R_{*}g)$.  In particular if $g$ is invariant under $R$ then so is $H(t,g)$ for all $t$.
 \item If $g$ is boundary admissible then so is $H(t,g)$ for all $t \in [0,1]$.
 \item If $g,g' \in H(P)$ are equivalent then $H(1,g) = H(1,g')$.
 \item The section of Teichmüller space associating to each class its image under $H(1,\cdot)$ is continuous admissible and symmetric.
\end{enumerate}
\end{theorem}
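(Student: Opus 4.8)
The plan is to let $H(1,\cdot)$ be the map $G\colon H(P)\to H(P)$ defined by $G(g)=\mathbf s(L(g))$, where $\mathbf s$ is the continuous admissible symmetric global section provided by Theorem~\ref{admissiblesymmetricglobalsection} and $L\colon H(P)\to(\R_+^{\ast})^3$ is the triple of boundary lengths; recall $L$ is continuous, constant on equivalence classes, and descends to a homeomorphism $T(P)\to(\R_+^{\ast})^3$. Then $G$ is continuous, $G(g)$ is equivalent to $g$ (it has the same $L$-value), $G(g)$ is admissible, $G$ depends only on the class of $g$, and $G(R_*g)=\mathbf s(L(R_*g))=\mathbf s(\sigma(L(g)))=R_*\mathbf s(L(g))=R_*G(g)$. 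So $G$ already fulfils everything asked of $H(1,\cdot)$ in properties~1--5, and the whole task is to connect the identity of $H(P)$ to $G$ by a homotopy respecting equivalence, admissibility on $A(P)$, and the symmetry under $R$.

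I would produce this homotopy as two glued stages. \emph{Stage~I}, for $t\in[0,1/2]$, is the admissible homotopy $F$ of Theorem~\ref{admissiblehomotopy} rescaled, $H(t,g):=F(2t,g)$; it moves $g$ through equivalent metrics to the admissible metric $F(1,g)$, it is symmetric, and it preserves $A(P)$. \emph{Stage~II}, for $t\in[1/2,1]$, must move the admissible metric $h:=F(1,g)$ to $G(h)=\mathbf s(L(h))=\mathbf s(L(g))=G(g)$ while remaining inside $A(P)$. The key point is that whenever $h,h'\in A(P)$ satisfy $L(h)=L(h')$ they already coincide on a neighbourhood of $\partial P$, since around each boundary circle there is just one standard collar of a prescribed length; consequently the canonical diffeomorphism $f_{(h,h')}\in\diffid(P)$ from the corollary to Theorem~\ref{productstructure} is, on a neighbourhood of each boundary circle, an orientation-preserving isometry of a standard hyperbolic collar fixing its core geodesic, hence a Euclidean rotation. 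Let $D\subset\diffid(P)$ be the subgroup of diffeomorphisms that are a rotation on a neighbourhood of each of the three boundary circles, so $f_{(h,h')}\in D$; note that $D$ acts on $A(P)$, because a metric which is rotationally symmetric near $\partial P$ is left unchanged there by a near-boundary rotation. Put $d(g):=f_{(\mathbf s(L(g)),\,F(1,g))}\in D$, which is continuous in $g$ and satisfies $d(g)_*\mathbf s(L(g))=F(1,g)$. Granting a contraction $c\colon[0,1]\times D\to D$ of $D$ onto $\mathrm{id}_P$ (so $c_0=\mathrm{id}_D$ and $c_1\equiv\mathrm{id}_P$) which commutes with conjugation by $R$, set, for $t\in[1/2,1]$,
$$H(t,g):=\bigl(c_{2t-1}(d(g))\bigr)_*\mathbf s(L(g)).$$
At $t=1/2$ this equals $d(g)_*\mathbf s(L(g))=F(1,g)$, so it matches Stage~I; at $t=1$ it equals $\mathbf s(L(g))=G(g)$; it is equivalent to $g$ for all $t$; and since $c_{2t-1}(d(g))\in D$ and $\mathbf s(L(g))$ is rotationally symmetric near $\partial P$, each $H(t,g)$ stays admissible. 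In particular, if $g\in A(P)$ then $H(t,g)$ is admissible throughout (Stage~I by Theorem~\ref{admissiblehomotopy}, Stage~II by the previous sentence), so property~3 holds.

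The outstanding verifications are routine. Properties~1, 4 and~5 are immediate from the definition of $G$. For property~2: Stage~I is symmetric by Theorem~\ref{admissiblehomotopy}; for Stage~II one combines $\mathbf s(\sigma(\ell))=R_*\mathbf s(\ell)$, the symmetry of $F$, and the identity $f_{(R_*h,R_*h')}=R\,f_{(h,h')}\,R^{-1}$ — valid because $R\,f_{(h,h')}\,R^{-1}$ leaves each boundary component of $P$ invariant, hence lies in $\diffid(P)$, and by uniqueness of the canonical diffeomorphism — to obtain $d(R_*g)=R\,d(g)\,R^{-1}$, whence $H(t,R_*g)=\bigl(c_{2t-1}(R\,d(g)\,R^{-1})\bigr)_*R_*\mathbf s(L(g))=R_*H(t,g)$ using the $R$-equivariance of $c$.

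The main obstacle is the one assumption I have used without proof in Stage~II: that $D$, the group of near-boundary rotations, is contractible, and in fact admits a contraction onto $\mathrm{id}_P$ commuting with conjugation by $R$. This rests on the contractibility of the identity component of the diffeomorphism group of a compact surface with boundary together with the fact that a full rotation near one boundary circle is a boundary Dehn twist: $D$ fibres over $(S^1)^3$ with contractible fibre and the connecting map $\pi_1((S^1)^3)\to\pi_0(\mathrm{fibre})$ is an isomorphism, which forces $D$ to be weakly contractible, and this must be carried out $R$-equivariantly. Aside from this, the construction is assembled from Theorems~\ref{admissiblehomotopy},~\ref{admissiblesymmetricglobalsection} and~\ref{productstructure} (with its corollary), which themselves rest on the Massage Lemma of Section~\ref{massageannuli} and on standard Teichmüller-theoretic input.
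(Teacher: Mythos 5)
Your reduction of the theorem to Stage~I (the admissible homotopy $F$) followed by Stage~II (transporting $F(1,g)$ to $\mathbf s(L(g))$ along a contraction of the group $D$ of near-boundary rotations) is structurally coherent, and your verification of properties 1--5 granting the existence of $c$ is essentially correct (including the observation that two admissible metrics with the same $L$ coincide near $\partial P$, so that $f_{(h,h')}\in D$, and the conjugation identity $d(R_*g)=R\,d(g)\,R$). But the ingredient you flag at the end — an actual contraction $c:[0,1]\times D\to D$ onto $\mathrm{id}_P$ that commutes with conjugation by $R$ — is a genuine gap, not a routine verification, and it carries essentially all the difficulty of the statement. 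The fibration argument you sketch at best yields \emph{weak} contractibility of $D$; upgrading this to a continuous contraction of an infinite-dimensional group requires (Palais-type) machinery, and nothing in that argument produces $R$-equivariance: an abstract nullhomotopy obtained from vanishing homotopy groups has no reason to commute with conjugation by $R$. Getting equivariance would require either equivariant obstruction theory (checking weak contractibility of the fixed subgroup $D^R$ as well, plus a $\Z/2$-CW structure on $D$) or an explicit equivariant construction — and the latter is exactly what the paper spends Section \ref{massageannuli} building (Schwartz's bi-equivariant flow, the bi-equivariant polar decomposition, the symmetric Fermi interpolation). Indeed, your missing lemma is more or less equivalent to the restriction of the desired homotopy $H$ to a single fiber of $L$ on $A(P)$, so it cannot be dismissed as standard input.

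For comparison, the paper's proof avoids contracting any diffeomorphism group directly. It first contracts $H(P)$ onto the single conformal metric $g_P$ by linearly shrinking Beltrami coefficients, $h(t,g)=(1-t)\mu$ via Theorem \ref{hyperbolicvsbeltrami}; this is symmetric for free because $R$ is holomorphic, so the Beltrami coefficient of $R_*g$ is $\mu\circ R$. This path does not preserve Teichmüller classes, so it is corrected using the product structure $H(P)\cong(\R_+^*)^3\times\diffid(P)$ of Theorem \ref{productstructure}: writing $h(t,g)=f_{t*}g_{\ell_t}$ one replaces it by $h'(t,g)=f_{t*}g_{\ell_0}$, which stays in the class of $g$ and remains symmetric by uniqueness of the $f_t$. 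Finally admissibility is restored by post-composing with $F(1,\cdot)$ and concatenating with $F$ itself. All the equivariance is thus inherited from the convexity of $B(P)$ and the uniqueness clauses of Theorem \ref{productstructure}, rather than asserted for a subgroup of $\diffid(P)$. If you want to salvage your route, you would need to either prove the $R$-equivariant contractibility of $D$ explicitly (e.g.\ by exhibiting $D$ equivariantly as a product of $\R^3$ with the group of diffeomorphisms equal to the identity near $\partial P$ in a fixed isotopy class, and contracting the latter equivariantly — itself nontrivial), or switch to the Beltrami-coefficient contraction, at which point you have reproduced the paper's argument.
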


The proof of this theorem consists in three steps. We first construct \emph{symmetric paths} of metrics to the \emph{unique} conformally Euclidean hyperbolic metric on $P$, $g_P$ (in particular the ending point of these paths is independent of the starting point). In particular, such a path does not preserve Teichmüller classes. But note that giving a continuous path in $H(P)$ is the same thing as giving a continuous path in $\diffid(P)\times T(P)$. Using the previous family of paths, as well as the symmetric section given by Theorem \ref {admissiblesymmetricglobalsection}, we find continuous paths of diffeomorphisms, which give paths of \emph{equivalent metrics}. We show that these path are again symmetric and that the time $1$ of the paths induces a symmetric section of the Teichmüller space that we characterize. Finally, projecting the paths via $F$ provides paths of metrics \emph{preserving admissible metrics} while keeping the other properties.

\subsubsection{Symmetry}

Given a metric $g\in H(P)$ let $\mu$ be its corresponding Beltrami coefficient and define the continuous path $h:[0,1] \times H(P) \to H(P)$ so that the Beltrami coefficient corresponding to $h(t,g)$ is exactly $(1-t)\mu$ (this is possible by Theorem \ref{hyperbolicvsbeltrami}).

Since $R$ is holomorphic with $\partial_zR=-1$, the Beltrami coefficient associated to $R_{*}g$ is $\mu\circ R$ for every $g\in H(P)$.  Hence both the Beltrami coefficients of $R_{*}h(t,g)$ and $h(t,R_*g)$ are $(1-t)\mu\circ R$. Using again Theorem \ref{hyperbolicvsbeltrami}, this implies that the two metrics $R_{*}h(t,g)$ and $h(t,R_*g)$ coincide: \emph{the path $h$ is symmetric}.

However boundary admissibility is not preserved by $h$ and neither are Teichmüller equivalence classes.   In fact one has $h(1,g) = g_P$ for all $g$, where $g_P$ is the unique hyperbolic conformal metric with geodesic boundary on $P$ given by Corollary \ref{planarpantsconformalmetric}.

\subsubsection{Preservation of Teichmüller classes}

We will first modify $h$ so that it preserves Teichmüller equivalence classes (while preserving its symmetry under $R$).

To do this let $\ell \mapsto g_{\ell}$ be a continuous symmetric global section of $T(P)$ (such an object exists by Theorem \ref{admissiblesymmetricglobalsection}).  Recall that the map $g \mapsto (L(g),f)$ is a homeomorphism between $H(P)$ and $(\R_+^{\ast})^3 \times \diffid(P)$, where $L(g) = (\ell_{out}(g),\ell_{left}(g),\ell_{right}(g))$ and $f$ is the unique element of $\diffid(P)$ such that $g = f_{*}g_{L(g)}$ (see Theorem \ref{productstructure}).

Given $g \in H(P)$ let $g_t = h(t,g)$, $\ell_t = L(g_t)$, and $f_t \in \diffid(P)$ be the unique diffeomorphism such that $g_t = f_{t*}g_{\ell_t}$.

We define $h'(t,g) = f_{t*}g_{\ell_0}$.   It is clear that $h'(t,g)$ is equivalent to $g$ for all $t \in [0,1]$.  We will show that $h'(t,R_*g) = R_*h'(t,g)$ for all $t \in [0,1]$ as well.

Notice that $L(R_*g) = \sigma(\ell_t)$ where $\sigma$ exchanges the `left' and `right' lengths of each triple.  By definition $h'(t,R_*g) = f'_{t*}g_{\sigma(\ell_0)}$ where $f'_t$ is the unique diffeomorphism in $\diffid(P)$ satisfying $f'_{t*}g_{\sigma(\ell_t)} = h(t,R_*g)$.

We claim that $f_t'=R\circ f_t\circ R$.

Indeed, using the symmetry of the section $\ell \mapsto g_\ell$ and of $h$ we obtain:
\[(R\circ f'_{t}\circ  R)_*g_{\ell_t} = R_* f'_{t*} g_{\sigma(\ell_t)}=R_* h(t,R_* g)=h(t,g).\]
By uniqueness of $f_t$, we must have $f_t=R\circ f_t'\circ R$. Putting all this together we obtain:
\[h'(t,R_*g) = (R\circ f_t\circ R)_{*}g_{\sigma(\ell_0)} = R_{*} f_{t*} g_{\ell_0} = R_{*}h'(t,g),\]
as desired.

Finally,  $h'(1,g)=f_{1*}g_{\ell(g)}$ where $f_1$ is characterized by $f_{1\ast}g_{\ell(g_P)}=g_P$. In particular, $h'(1,g)$ depends only on the Teichmüller class of $g$.

\subsubsection{Preservation of admissibility}

So far, we have defined a homotopy $h':[0,1]\times H(P) \to H(P)$ satisfying all the desired properties except that $h'(t,g)$ need not be admissible even if $g$ is, and in particular the section given by $h'(1,\cdot)$ is not admissible (though it is symmetric).

To fix this problem we use the homotopy $F:[0,1]\times H(P) \to H(P)$ given by Theorem \ref{admissiblehomotopy}.  Consider the continuous map $h'':[0,1]\times H(P) \to H(P)$ defined by

\[h''(t,g) = F(1,h'(t,g)).\]

This map provides a path between $F(1,g)$ and $F(1,h'(t,g))$. The concatenation of $F$ and $h''$ provides the desired homotopy $H:[0,1]\times H(P)\to H(P)$.

It follows from the properties of $h'$ and $F$ that $H(t,g)$ is equivalent to $g$ for all $t \in [0,1]$.  

Remember that $F$ is symmetric: for every $(t,g)\in [0,1]\times H(P)$, we have $R_{\ast}F(t,g)=F(t,R_{\ast}g)$. Hence since $h'$ is symmetric, we get that $h''$ is symmetric as well. Finally, we obtain $R_{\ast}H(t,g)=H(t,R_{\ast}g)$ for every $(t,g)\in [0,1]\times H(P)$

%

The fact that if $g$ is admissible then $H(t,g)$ is too for all $t \in [0,1]$ follows from Property 2 of $F$ in the statement of Theorem \ref{admissiblehomotopy}.

We had previously verified that $h'(1,g)$ depended only on the Teichmüller class of $g$ (equivalently only on $L(g)$): this property is again satisfied by $H$.

The fact that the section of Teichmüller space given by $H(1,\cdot)$ is admissible follows from the properties of $F$, and symmetry is because $R_*H(1,g) = H(1,R_*g)$ for all $g$.  This concludes the proof.

\section{Model hyperbolic metrics on the Hirsch foliation}\label{sectionmodels}
\subsection{Construction of model metrics\label{modelmetrics}}

We will now use the notation given in Section \ref{hirschfoliation} which for the reader's convenience we recall briefly.  The Hirsch foliation was constructed by taking an explicit endomorphism $f:S^1 \times \C \to S^1 \times \C$ which had a solenoid attractor inside a solid torus $\fT = S^1 \times \D$ and considering the quotient manifold $M = (\overline{\fT} \setminus f(\fT))/f$.   The Hirsch foliation is the projection of the foliation on $S^1 \times \C$ by sets of the form $\lbrace e^{it}\rbrace \times \C$ and the leaves contains a family of pair of pants defined by $P_t = \overline{\fT} \setminus f(\fT) \bigcap \lbrace e^{it}\rbrace \times \C$.   One has $P_t = P_{t + 2\pi}$ and one can identify each $P_t$ with the standard pair of pants $P$ by projection onto $\C$ followed by the rotation $z \mapsto e^{-it/2}z$.   However, it is important to note that the identification one obtains for $P_t$ and $P_{t+2\pi}$ differ by a rotation $R$ of angle $\pi$.

\paragraph{Metrics in a fundamental domain.} Given a continuous \emph{length function} $\lambda:S^1 \to \R_+^{\ast}$ and a continuous \emph{twist function} $\tau:S^1 \to \R$ we will construct a hyperbolic metric on the Hirsch foliation.  For this purpose we fix for what remains of the article, a homotopy $H:[0,1]\times H(P) \to H(P)$ and a continous symmetric admissible section $\ell \mapsto g_{\ell}$ of $T(P)$ given by Theorem \ref{theperfectsection}.

Using the identification of each $P_t$ with the standard pair of pants $P$, we consider the on $P_t$ coming from the global section $\ell \mapsto g_{\ell}$ such that the outer boundary (the one corresponding to $S^1$ under the identification with $P$) has length $\lambda(e^{it})$ and the other two have lengths $\lambda(e^{it/2})$ and $\lambda(e^{i(t+2\pi)/2})$ according to whether their preimages under $f$ belong to $P_{t/2}$ or $P_{(t+2\pi)/2}$.  Let $g_t$ be the thus chosen metric on $P_t$.   Notice that because the section we have chosen is symmetric one has $g_t = g_{t+2\pi}$ so the metric is defined unambiguously.

\paragraph{Extension with a twist function.} We now use the continuous twist function $\tau:S^1\to\R$. Consider the \emph{twisted solenoid} $f_\tau:S^1 \times \C \to S^1 \times \C$ defined by:
\begin{equation}
\label{twistedsolenoid}
f_\tau(e^{it},z) = \left(e^{i2t},\frac{1}{2}e^{it} + \frac{e^{i\tau(e^{it})}}{4}z\right).
\end{equation}

Because the metric $g_t$ is admissible, its pushforward under $f_\tau$ (which is defined on a subset of $\lbrace e^{i2t}\rbrace \times \C$ contained in one of the `holes' of the pair of pants $P_{2t}$) coincides with $g_{2t}$ where both are defined.  Furthermore,  both metrics are standard around the circle where they intersect and therefore glue together smoothly.   Hence one may extend the family $g_t$ to a unique maximal $f_\tau$-invariant family of metrics on $(S^1 \times \C) \setminus K_\tau$ where $K_\tau$ is defined analogously to $K$ replacing $f$ by $f_\tau$.

The map $f_\tau$ depends on the twist parameter $\tau$ only up to multiples of $2\pi$.  However we will choose a conjugating homeomorphism $h_\tau$ which depends on the actual values of $\tau$.   The homeomorphism $h_\tau$ is defined as the homeomorphism satisfying the following properties:
\begin{enumerate}
 \item The map is of the form $h_\tau(e^{it},z) = (e^{it}, h_{\tau,t}(z))$.
 \item For each $t$ the map $h_{\tau,t}$ coincides with $z \mapsto e^{-i\rho(|z|)\tau(e^{it})}z$ on the pair of pants $P_t$, where $\rho$ is the bump function of Section \ref{auxiliarystep}.
 \item One has $f\circ h_\tau = h_\tau \circ f_\tau$ on all of $S^1 \times \C$.
\end{enumerate}

Pushing forward the metrics $g_s$ using $h_\tau$ one obtains an $f$ invariant family of metrics on $(S^1 \times \C) \setminus K$ and hence a hyperbolic metric on the Hirsch foliation $g_{\lambda,\tau}$.

\begin{remark} Let us emphasize that one should be careful with the interpretation of the twist function. When we glue pairs of pants via Smale's solenoid, there is naturally a twist in such a way that if $t>t'$ are close, $P_t$ and $P_{t'}$ are respectively glued to $P_{2t}$ and $P_{2t'}$ with twist parameters whose ratio is $e^{i(t-t')}$. This twist can't be undone for a topological reason: the quotient manifold $M$ is not Seifert. Hence our function $\tau$ has to be seen as an additional twist by that imposed by the dynamics of the foliation. Hence, even if we put a null twist function with a constant length function, the leaves won't be pairwise isometric (they are not even pairwise diffeomorphic).

\end{remark}

\subsection{Non-equivalence of model metrics}

We now show that construction we have given yields infinitely many non-equivalent hyperbolic metrics. The principal difficulty is of course to prove that two metrics corresponding to the same length functions, and to twist functions which differ from a multiple of $2\pi$ are not equivalent. Here this is more difficult than in the compact case where in order to prove that a Dehn twist is not isotopic to the identity, one chooses a transverse curve to that around which one perform the Dehn twist. Then one uses homology theory to show that the action of the Dehn twist on this curve is non-trivial. In the case of noncompact leaves, where we don't have a priori a transverse closed geodesic, this argument does not work: here the idea is to use the action of the twisting on those leaves \emph{with non-trivial holonomy}, in order to reduce the problem in a compact region.

\begin{theorem}\label{fenchelnielseninjectivity}
If $g_{\lambda,\tau}$ belongs to the same Teichmüller equivalence class as $g_{\lambda',\tau'}$ then $\lambda = \lambda'$ and $\tau = \tau'$.
\end{theorem}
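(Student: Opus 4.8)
The plan is to reconstruct the functions $\lambda$ and $\tau$ directly from the Teichmüller class of $g_{\lambda,\tau}$ by reading off geometric invariants on individual leaves. Suppose $I$ is an identity isotopy of $(M,\F)$ such that $I(1,\cdot)$ is, leafwise, an isometry from $g_{\lambda,\tau}$ to $g_{\lambda',\tau'}$. Since an identity isotopy preserves each leaf, for every leaf $L$ the map $\Psi_L:=I(1,\cdot)|_L$ is an isometry $(L,g_{\lambda,\tau}|_L)\to(L,g_{\lambda',\tau'}|_L)$ which is isotopic to $\mathrm{id}_L$; hence $\Psi_L$ fixes every free homotopy class of loops in $L$ and therefore preserves the length of the geodesic representative of each such class.

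Recovering $\lambda$ is then immediate. Given $\theta=e^{it_0}\in S^1$, let $L$ be the leaf containing $P_{t_0}$ and set $\gamma_\theta:=\partial_{\mathrm{out}}P_{t_0}$. By construction $\gamma_\theta$ is a simple closed $g_{\lambda,\tau}$-geodesic of length $\lambda(\theta)$ (it is the common geodesic boundary of two glued pairs of pants, and it is essential and non-peripheral, hence the unique geodesic in its free homotopy class), and likewise it is a $g_{\lambda',\tau'}$-geodesic of length $\lambda'(\theta)$. Applying the first paragraph to $[\gamma_\theta]$ gives $\lambda(\theta)=\lambda'(\theta)$, so $\lambda=\lambda'$.

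The substance of the theorem, and the step I expect to be the main obstacle, is recovering $\tau$ as a genuinely $\R$-valued (not merely $\R/2\pi\Z$-valued) function; as the discussion preceding the statement indicates, this forces one to localize to a compact part of a leaf carrying non-trivial holonomy. Let $\theta_0$ be a periodic point of the doubling map $z\mapsto z^2$ of period $k$, with orbit $\theta_0,\theta_0^{2},\dots,\theta_0^{2^{k-1}}$, and let $L_{\theta_0}$ be the leaf through $P_{t_0}$. The pairs of pants $P_{t_0},P_{2t_0},\dots,P_{2^{k-1}t_0}$ are glued cyclically inside $L_{\theta_0}$ (the outer boundary of each to an inner boundary of the next, the last one back to $P_{t_0}$), so their union is a compact incompressible subsurface $\Sigma_{\theta_0}\subset L_{\theta_0}$ of genus $1$ with $k$ boundary circles $\delta_1,\dots,\delta_k$ (the remaining inner boundaries, each a simple closed geodesic of $g_{\lambda,\tau}$). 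Because $\Psi_{L_{\theta_0}}$ fixes each $[\delta_i]$, it carries the $g_{\lambda,\tau}$-geodesic representative of the multicurve $\delta_1\cup\cdots\cup\delta_k$ onto its $g_{\lambda',\tau'}$-counterpart, and hence maps the copy of $\Sigma_{\theta_0}$ bounded by the $g_{\lambda,\tau}$-geodesics in these classes isometrically onto the copy bounded by the $g_{\lambda',\tau'}$-geodesics. Since $\Sigma_{\theta_0}$ is incompressible in $L_{\theta_0}$ and $\Psi_{L_{\theta_0}}$ is isotopic to the identity, the induced self-map of $\Sigma_{\theta_0}$ acts trivially on $\pi_1(\Sigma_{\theta_0})$ and is therefore isotopic to the identity of $\Sigma_{\theta_0}$ (a self-homeomorphism of a compact surface of negative Euler characteristic that is trivial on $\pi_1$ is isotopic to the identity). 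Thus $g_{\lambda,\tau}|_{\Sigma_{\theta_0}}$ and $g_{\lambda',\tau'}|_{\Sigma_{\theta_0}}$ represent the same point of the Teichmüller space $T(\Sigma_{\theta_0})$.

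To finish I would use Fenchel--Nielsen coordinates on $T(\Sigma_{\theta_0})$ adapted to the pants decomposition along the $k$ gluing curves $\gamma_{\theta_0},\dots,\gamma_{\theta_0^{2^{k-1}}}$: such a point is determined by the $k$ boundary lengths, the $k$ lengths of the gluing curves, and the $k$ twist parameters. The lengths all occur among the values of $\lambda$, already shown to agree. By construction the twist parameter along $\gamma_{\theta_0^{2^j}}$ differs from its value in the untwisted ($\tau\equiv 0$) model precisely by the twisting inserted by the conjugating homeomorphism $h_\tau$ near that curve, namely by $-\frac{\lambda(\theta_0^{2^j})}{2\pi}\,\tau(\theta_0^{2^j})$ — a rotation by angle $-\tau(\theta_0^{2^j})$ of a geodesic circle of length $\lambda(\theta_0^{2^j})$ is a twist by that arc length — which is an affine, injective function of $\tau(\theta_0^{2^j})\in\R$. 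Equality in $T(\Sigma_{\theta_0})$ then forces $\tau(\theta_0^{2^j})=\tau'(\theta_0^{2^j})$ for $j=0,\dots,k-1$. Letting $\theta_0$ range over all periodic points of the doubling map, which are dense in $S^1$, and using continuity of $\tau$ and $\tau'$, we obtain $\tau=\tau'$. The delicate points to be handled carefully are the bookkeeping showing that $h_\tau$ contributes to the Fenchel--Nielsen twist as a real-valued (rather than $\R/2\pi\Z$-valued) quantity, and the verification that $\Psi_{L_{\theta_0}}$ genuinely restricts to the subsurface $\Sigma_{\theta_0}$ respecting its marking.
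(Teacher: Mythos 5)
Your proof is correct, and for the substantive half --- recovering $\tau$ as an $\R$-valued rather than $\R/2\pi\Z$-valued function --- it takes a genuinely different route from the paper's. The paper proceeds in two stages: it first splits each hyperbolic pair of pants into its two congruent right-angled hexagons and compares the resulting distinguished points on the common boundary geodesic of neighbouring pants, concluding that $\tau-\tau'$ is a constant integer multiple $2\pi n$ of $2\pi$; it then kills the integer by working only on the one-holed torus over the fixed point $1$ of the doubling map, where $h_{\tau'}^{-1}\circ h_{\tau}$ descends to a Dehn twist of angle $2\pi n$ that is simultaneously an isometry of a compact hyperbolic surface, hence of finite order --- impossible unless $n=0$. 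You instead read off the full real-valued twist at every periodic point $\theta_0$ of $z\mapsto z^2$ from the injectivity of Fenchel--Nielsen coordinates on $T(\Sigma_{\theta_0})$, and finish by density of periodic points and continuity of $\tau,\tau'$; this dispenses with the hexagon step but requires the compact-subsurface argument at a dense set of points rather than at one (the paper's $S$ is exactly your $\Sigma_{\theta_0}$ for $k=1$). The two mechanisms for excluding the Dehn-twist ambiguity are cousins: finite order of isometries of compact hyperbolic surfaces versus non-triviality of Dehn twists in Teichm\"uller space. The points you flag as delicate are indeed the ones to write out, but they close: $\Psi_L|_{\Sigma_{\theta_0}}$ preserves $\Sigma_{\theta_0}$ because the $\delta_i$ are geodesics for both metrics, it acts trivially on oriented free homotopy classes of $\Sigma_{\theta_0}$ (closed geodesics of a subsurface with geodesic boundary lie in that subsurface, so conjugacy in $\pi_1(L)$ forces conjugacy in $\pi_1(\Sigma_{\theta_0})$), hence is isotopic to $\mathrm{id}_{\Sigma_{\theta_0}}$ by the Alexander method; and the affine dependence of the twist on $\tau(\theta_0^{2^j})$ is immediate because, once $\lambda=\lambda'$, the two model metrics restrict to the \emph{same} metric on each pair of pants (both come from the fixed section $\ell\mapsto g_{\ell}$) and differ only by the rotation of angle $\tau-\tau'$ in the standard collar inserted by $h_\tau$. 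No gap.
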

\begin{proof}
Let $g_t$ and $g_t'$ be the family of metrics on the sets $\C_t = (\lbrace e^{it}\rbrace \times \C) \setminus K$ obtained by lifting $g_{\lambda,\tau}$ and $g_{\lambda',\tau'}$ to $(S^1 \times \C)\setminus K$.

Suppose there is a leaf preserving identity isotopy taking $g_{\lambda,\tau}$ to $g_{\lambda',\tau'}$.  Then lifting it one obtains an identity isotopy $F:[0,1] \times (S^1\times \C) \setminus K \to (S^1\times \C) \setminus K$ which preserves each $\C_t$ and such that $F_1$ sends $g_t$ to $g_t'$.

Since the outer boundary $C_t$ of the pair of pants $P_t$ is the unique closed geodesic in its isotopy class on its leaf for both $g_t$ and $g_t'$, one must have $F_1(C_t) = C_t$.  This implies that the length of $C_t$ is the same for both $g_t$ and $g_t'$ and since this is valid for all $t$ one obtains $\lambda = \lambda'$.

To show that $\tau = \tau'$ we first observe that any hyperbolic pair of pants with geodesic boundary components can be split in a unique way into two congruent hyperbolic right-angled hexagons.  This gives us a canonically defined pair of points which split each geodesic boundary comoponent in two.  If $g_{\lambda,\tau}$ and $g_{\lambda,\tau'}$ are equivalent then the angle between the thus obtained pairs of points on the boundaries of two neighboring pairs of pants in the foliation must coincide for both metrics.  This yields that there must exist an integer $n$ such that $\tau(e^{it}) - \tau'(e^{it}) = 2\pi n$ for all $t$.

To conclude the proof we will show that $n = 0$.  To see this consider the leaf containing the pair of pants $P_1 = \lbrace 1\rbrace \times P$.  

Notice that $P_1$ projects to a torus minus a disk $S$ in the foliation and that both metrics coincide on $S$.  Since $f_\tau = f_{\tau + 2\pi n} = f_{\tau'}$ one obtains that the map $h = h_{\tau'}^{-1}\circ h_{\tau}$ commutes with $f$ and hence projects to a leaf preserving homeomorphism $H$ of the Hirsch foliation.  If $\tau \neq \tau'$ then $H$ is a Dehn twist of angle $2\pi n$ around a meridian of $S$.  On the other hand $H$ is an isometry between the hyperbolic metrics $g_{\lambda,\tau}$ and $g_{\lambda,\tau'}$ on $S$.   Because both metrics coincide one has that $H$ is an self-isometry of $S$ endowed with either one of them and hence, since $S$ is compact, $H$ has finite order. But it is impossible if $n\neq 0$: the action of $H$ on a closed geodesic in $S$ transverse to the meridian has infinite order. This shows that $n= 0$ as claimed.
\end{proof}

\section{Deforming hyperbolic metrics to model metrics}\label{sectiondeforming}
In this section we will prove that any hyperbolic metric on the Hirsch foliation can be deformed via an identity isotopy (which preserves and is smooth on each leaf) to one of the model metrics constructed in the previous section.

The proof will be carried out in two cases.  First, we show how to deform metrics for which the meridians (i.e. the outer boundaries of the pairs of pants $P_t$) are already geodesics.   Second we show how to deform a general hyperbolic metric to one with geodesic meridians.

\subsection{Metrics with geodesic meridians}
Let $g_{\lambda,\tau}$ be the family of metrics depending on two continuous functions from $S^1$ to $\R^{\ast}_+$ and $\R$ respectively, constructed in the previous section.  Recall that the meridians of the Hirsch foliation are the projections of the curves $\lbrace e^{it}\rbrace \times S^1 \subset S^1 \times \C$.  Notice that for the metrics $g_{\lambda,\tau}$ all meridians are geodesics.   In this subsection we will consider only metrics for which this is the case, proving the following:
\begin{proposition}
\label{geodesicmeridians}
Let $g$ be a hyperbolic metric on the Hirsch foliation for which all meridians are geodesics.  Then $g$ is equivalent to $g_{\lambda,\tau}$ for a unique choice of $\lambda:S^1 \to \R^{\ast}_+$ and $\tau:S^1 \to \R$.
\end{proposition}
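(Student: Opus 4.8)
\emph{Proof plan.} Uniqueness of $(\lambda,\tau)$ is immediate from Theorem \ref{fenchelnielseninjectivity}: were $g$ equivalent both to $g_{\lambda,\tau}$ and to $g_{\lambda',\tau'}$, these two model metrics would be mutually equivalent, forcing $(\lambda,\tau)=(\lambda',\tau')$. So the content is existence, and the plan is to build $(\lambda,\tau)$ together with a leaf-preserving identity isotopy carrying $g$ to $g_{\lambda,\tau}$ in two stages. Throughout I work with the lift of $g$ to an $f$-invariant family $g_t$ on $\C_t=(\{e^{it}\}\times\C)\setminus K$. Since every meridian is a geodesic, each $g_t$ restricts on $P_t$ to a hyperbolic metric with geodesic boundary, hence --- after the standard identification $P_t\cong P$, which changes by $R$ between $t$ and $t+2\pi$ --- to an element of $H(P)$. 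Define $\lambda(e^{it})$ to be the $g$-length of the meridian forming the outer boundary of $P_t$; this is a well-defined continuous function $S^1\to\R_+^{\ast}$ (insensitive to the $R$-ambiguity, since lengths are), and $f$-invariance forces the inner boundaries of $P_t$ to have $g$-lengths $\lambda(e^{it/2})$ and $\lambda(e^{i(t+2\pi)/2})$; write $\ell_t$ for the resulting triple.

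\emph{Step 1 (normalization).} I would apply the Massage Lemma (Theorem \ref{massagelemma}) around each meridian of the foliation. A meridian $m$ occurs at once as the outer boundary of some $P_t$ and, through $f$, as an inner boundary of a neighbouring pair of pants; a foliated neighbourhood of $m$ in $M$ is a one-sided collar inside the first pants glued along $m$ to a one-sided collar inside the second, and under the affine identifications $z\mapsto z/4\pm 1/2$ of Section \ref{massageannuli} these two collars assemble into the annulus $\A$ around the unit circle on which the Massage Lemma operates (this is exactly the naturality under the affine maps Section \ref{massageannuli} is set up to provide; the fixed rotational offsets built into the gluing only change which metric is being massaged). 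Running $F_{s,\cdot}$ leaf by leaf and meridian by meridian produces an identity isotopy $I^{1}$ of $(M,\F)$: it is consistent with the $R$-identification $P_t\leftrightarrow P_{t+2\pi}$ thanks to the symmetry property of the Massage Lemma, and it varies continuously transversally because $F_{s,\cdot}$ is continuous in the metric in the smooth topology. The outcome $g^{(1)}=I^{1}_{1\ast}g$ is boundary admissible on every $P_t$, with the same length data $\ell_t$.

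\emph{Step 2 (matching).} Both $g^{(1)}|_{P_t}$ and $g_{\lambda,\tau}|_{P_t}$ are boundary admissible hyperbolic metrics on $P$ with boundary lengths $\ell_t$, so they are isometric (a hyperbolic pair of pants is determined up to isometry by its three boundary lengths); indeed $g_{\lambda,\tau}|_{P_t}$ is obtained from the section metric $g_{\ell_t}=H(1,g^{(1)}|_{P_t})$ (Theorem \ref{theperfectsection}) by the collar twist $h_{\tau,t}$. For fixed $t$, the path $s\mapsto H(s,g^{(1)}|_{P_t})$ stays in $A(P)$, consists of metrics equivalent to $g^{(1)}|_{P_t}$, ends at $g_{\ell_t}$, and is symmetric under $R$; by Corollary \ref{isotopiesvscurvesofmetrics} it is realised by a canonical identity isotopy of $P$ that, by admissibility and length-invariance, is a rotation on each boundary collar. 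Concatenating with $s\mapsto h_{s\tau,t}$ gives, on each $P_t$, an identity isotopy of $P$ from $g^{(1)}|_{P_t}$ to $g_{\lambda,\tau}|_{P_t}$ acting by rotations on the boundary collars. The plan is to choose $\tau$ so that these pants-by-pants isotopies glue to a genuine identity isotopy of $M$: this needs the collar rotation near the outer boundary of each $P_t$ to agree, at every time $s$ and through $f$, with the one near the matching inner boundary of the neighbouring pair of pants. I would pin down $\tau$ by requiring the canonical pair of points splitting each geodesic boundary into the two right-angled hexagons of its pair of pants (as in the proof of Theorem \ref{fenchelnielseninjectivity}) to be carried by the glued isometry onto the corresponding pair for $g^{(1)}$; since for $g_{\lambda,\tau}$ the resulting angular offset across a meridian is affine in $\tau$, this determines a unique continuous $\tau:S^1\to\R$, and one then checks $g^{(1)}$ --- hence $g$ --- is equivalent to $g_{\lambda,\tau}$ for this $(\lambda,\tau)$.

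\emph{Main obstacle.} The real difficulty is this gluing step: making the leafwise, pair-of-pants-wise deformations assemble into an identity isotopy of the \emph{foliated $3$-manifold}. An isometry of a standard hyperbolic collar onto itself is a rotation, typically nontrivial, so the collar rotations seen at a meridian from its two sides must be reconciled; with $\lambda$ fixed, the remaining data is a rotation-valued cochain on the meridians constrained by a cocycle-type relation coming from the doubling monodromy $e^{it}\mapsto e^{2it}$, and it is the solvability of this inhomogeneous constraint --- whose solution set is a nonempty affine space, with $g_{\lambda,\tau}$ furnishing an endpoint --- that forces the introduction of $\tau$. A secondary subtlety, absorbed by the symmetry already present in the Massage Lemma and in the section $H(1,\cdot)$, is the exceptional meridians across which the two ``legs'' have equal length: there the pair-of-pants isometry is determined only up to the order-two rotation $R$, and this choice must be made consistently so that no net $R$-twist is accumulated as one goes around the foliation.
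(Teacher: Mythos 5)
Your proposal is correct and follows essentially the same route as the paper: the Massage Lemma applied around each meridian to make the metric admissible, the canonical homotopy $H$ of Theorem \ref{theperfectsection} run pair-of-pants by pair-of-pants via Corollary \ref{isotopiesvscurvesofmetrics}, and a twist function absorbing the rotational mismatch across meridians by means of the twisted solenoid $f_\tau$ and the conjugacy $h_\tau$. The only cosmetic difference is that the paper defines $\tau_s(e^{it})$ directly, at every time $s$, as the difference of the boundary rotation angles of $f_{s,t}$ and $f_{s,2t}$ (normalized by $\tau_0=0$), which settles the mod-$2\pi$ ambiguity that your endpoint characterization via the hexagon-splitting points would otherwise leave.
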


\begin{proof}
The uniqueness claim follows immediately from Theorem \ref{fenchelnielseninjectivity}.

\paragraph{Making the metrics admissible near the meridians.} Let $g_t$ be the lift of the given hyperbolic metric to $\lbrace e^{it}\rbrace \times \C$.  By hypothesis the circle $\lbrace e^{it}\rbrace \times S^1$ is a geodesic for $g_t$.  We will first show that $g$ can be deformed to a metric whose lift is standard around each of these geodesics.

For this purpose we identify each annulus $\A_t = \lbrace e^{it} \rbrace \times \lbrace 0.9 < |z| < 1.1\rbrace$ with $\A = \lbrace 0.9 \l |z| \l 1.1\rbrace$ by projection.  Using this identification, the Massage Lemma (i.e. Lemma \ref{massagelemma}) yields a continuous family of identity isotopies supported in each $\A_t$ and which deform each metric $g_t$ to a standard metric around the unit circle.   Since each annulus $\A_t$ projects to the Hirsch foliation diffeomorphically this yields a well defined, leaf-preserving, identity isotopy on the Hirsch foliation.  After applying this identity isotopy one obtains a metric on the Hirsch foliation whose lift is standard around each meridian circle $\lbrace e^{it} \rbrace \times S^1$.  Notice that (since the family of metrics $g_t$ is $f$-invariant) this implies that one may assume from now on that each metric $g_t$ is admissible on the pair of pants $P_t$.

\paragraph{Deformation in a fundamental domain.} Assuming that each metric $g_t$ is admissible on $P_t$ we will now show how to deform $g$ to a model metric.   There is only one possible definition of the parameter $\lambda(e^{it})$ for the model metric, namely the length of the outer boundary of $P_t$ for the metric $g_t$.

To identify the twist parameter we first recall that each pair of pants $P_t$ is identified with the standard pair of pants $P$ via projection onto the second coordinate composed with the rotation $z \mapsto e^{-it}z$.  And recall, once again, that $P_t = P_{t+2\pi}$ but the identifications with $P$ differ by a rotation $R$ of angle $\pi$.

Consider the homotopy $H:[0,1]\times H(P) \to H(P)$ given by Theorem \ref{theperfectsection} which was also used to construct the model metrics in the previous section.   Because of the symmetry property of $H$ the continuous curve of metrics $s \mapsto H(s,g_t), s \in [0,1]$ is well defined on $P_t$.

\paragraph{Looking for the twist function.} Let $f_{s,t}:P_t \to P_t$ be the unique diffeomorphism isotopic to the identity such that $f_{s,t*}g_t = H(s,g_t)$ (see Corollary \ref{isotopiesvscurvesofmetrics}).  This family of diffeomorphisms on $P_t$ cannot in general be extended $f$-invariantly and hence do not define a leaf-preserving isotopy of the Hirsch foliation.

However, notice that since both $g_t$ and $H(s,g_t)$ are boundary admissible $f_{s,t}$ coincides with a Euclidean rotation on a neighborhood of each boundary component of $P_t$.

Let $\tau_s(e^{it})$ be the difference between the angle of rotation of the outer boundary of $P_t$ under $f_{s,t}$ and that of its image under $f_{s,2t}$ (there is a unique continuous way to choose $\tau_s$ starting with $\tau_0 = 0$).   For each $s \in [0,1]$ the family of diffeomorphisms $f_{s,t}$ can be extended continuously and uniquely in a $f_{\tau_s}$-invariant manner, where $f_{\tau_s}$ is the twisted solenoid map as defined for the construction of model metrics given in section \ref{modelmetrics}.   This implies that the diffeomorphisms defined on each $P_t$ by $h_{\tau_s}\circ f_{s,t}$ (where $h_{\tau_s}$ is the homeomorphism which conjugates $f$ and $f_{\tau_s}$ used for the construction of model metrics in section \ref{modelmetrics}) can be extended to a diffeomorphism of $(S^1 \times \C) \setminus K$ in a unique $f$-invariant manner.  This defines a leaf preserving identity isotopy of the Hirsch foliation.

To conclude, notice that the metric obtained by pushing $g_t$ forward by the time $1$ of the isotopy we just defined is defined on $P_t$ has $h_{\tau_1(e^{it})*}f_{s,t*}g_t = h_{\tau_1(e^{it})*}H(1,g_t)$.  Hence it coincides with the model metric $g_{\lambda,\tau}$ for $\tau = \tau_1$ and $\lambda(e^{it})$ defined above.   This concludes the proof.
\end{proof}

\subsection{Deforming general hyperbolic metrics}

 The goal here is to prove the following proposition:

\begin{proposition}
\label{makemeridiansgeo}
Let $g'$ be a hyperbolic metric for the Hirsch foliation. Then there is an identity isotopy of the Hirsch foliation $H:M\times[0,1]\to M$ such that the meridians are geodesics of $g=H_{\ast} g'$.
\end{proposition}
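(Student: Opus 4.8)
The plan is to use the leafwise curve shortening flow of Grayson \cite{Grayson1989} to drag each meridian onto the closed geodesic in its free homotopy class, and then to repackage the resulting family of leafwise isotopies of curves as an honest leaf-preserving isotopy of the foliation by a parametrized isotopy extension argument. Before doing this I would record the topological input. Each leaf of $(M,\F)$ equipped with the restriction of $g'$ is a \emph{complete} hyperbolic surface, and by the description in Section \ref{hirschfoliation} it is a plane or a torus with a Cantor set removed; in particular it has no isolated ends. The meridians lying on a given leaf are exactly the curves of the pants decomposition coming from the family $(P_t)$: they are pairwise disjoint, pairwise non-freely-homotopic, and each of them is essential and separates the leaf into two pieces each carrying infinitely many ends, hence is not homotopic into an end. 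Therefore each meridian has a \emph{unique} closed geodesic representative for $g'$, and the geodesic representatives of the meridians on a fixed leaf are again pairwise disjoint. Finally, $g'$ lifts to an $f$-invariant family of metrics on $(S^1\times\C)\setminus K$ and $f$ permutes the meridians, so the whole discussion is $f$-equivariant and it suffices to work upstairs, equivariantly.

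Next I would run Grayson's flow, upstairs, simultaneously on all the meridians of each leaf slice of $(S^1\times\C)\setminus K$. By the avoidance principle the evolving curves stay embedded and pairwise disjoint, and the flow of a given meridian remains trapped between the (also evolving) neighbouring meridians of the same leaf, hence inside a fixed compact region of the leaf where the curvature is $-1$ and the injectivity radius is bounded below. Consequently the flow exists for all time, and since the meridian is essential and non-peripheral it converges in the $C^\infty$ topology, exponentially fast, to the unique closed geodesic in its free homotopy class. Reparametrizing time by $s=1-e^{-t}$ turns this into a smooth isotopy $s\mapsto \gamma^s$, $s\in[0,1]$, with $\gamma^0$ the meridian and $\gamma^1$ its $g'$-geodesic. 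The flow is canonically attached to the metric and commutes with isometries, so this family is $f$-equivariant; and since $g'$ varies continuously in the smooth topology transversally, parabolic regularity together with the locally uniform exponential rate of convergence shows that $s\mapsto\gamma^s$ and its endpoint $\gamma^1$ depend continuously on the transverse parameter in the smooth topology, up to and including $s=1$. Note also that the lengths of the meridians, and hence the sizes of these isotopies, are uniformly bounded because $S^1$ is compact, so the whole family is uniformly controlled near the Cantor ends.

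Finally I would promote the leafwise isotopies of curves to a leaf-preserving ambient isotopy. The disjoint embedded curves $\gamma^s$ move in their leaf with a leafwise-smooth velocity field defined on their union; using tubular neighbourhoods of the (locally finite, uniformly controlled) traces of these isotopies together with a partition of unity, one extends this to a leafwise time-dependent vector field on $(S^1\times\C)\setminus K$ which can be chosen $f$-equivariant and transversally continuous in the smooth topology, and then integrates it. This yields an $f$-equivariant leaf-preserving isotopy $\Phi_s$ with $\Phi_0=\mathrm{id}$ and $\Phi_s$ carrying each meridian to $\gamma^s$; it descends to a leaf-preserving identity isotopy of $(M,\F)$. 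Setting $H(\cdot,s)=\Phi_s^{-1}$ and $g=H(\cdot,1)_{\ast}g'=(\Phi_1)^{\ast}g'$, a meridian $\gamma$ is a $g$-geodesic precisely because $\Phi_1(\gamma)=\gamma^1$ is a $g'$-geodesic, and as this holds for all meridians the proposition follows.

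The step I expect to be the main obstacle is making the two ``in families'' assertions precise: that Grayson's flow, which a priori lives only leafwise, assembles into a transversally continuous (in the $C^\infty$ topology) foliated object with the required uniform control near the Cantor set of ends, and the accompanying parametrized, $f$-equivariant isotopy extension. Checking that the meridians are genuinely non-peripheral on every leaf (so that there is a geodesic to converge to) is comparatively routine but should not be skipped.
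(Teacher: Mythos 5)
Your strategy uses exactly the paper's two main tools (Grayson's curve shortening flow plus a foliated isotopy extension), but you run the flow in the opposite direction from the paper. The paper first constructs the \emph{geodesic torus} $T'=\bigcup_s\gamma_s'$ directly (uniqueness of the geodesic in the free homotopy class of each meridian, with transverse continuity obtained from Reeb stability), and then evolves the curves $\gamma_s'$ under the curve shortening flow of the \emph{fixed model metric} $g_{1,0}$, for which the meridians are already geodesics, so that the flow converges \emph{to} the meridians; crucially it only runs the flow for a finite time $\tau$, until all curves are $\eps$-close to the meridians in $C^1$, and then finishes with an explicit interpolation (writing the nearby curve as a graph over the meridian in collar coordinates). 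You instead flow the meridians under the flow of the given metric $g'$ toward the $g'$-geodesics and compactify time by $s=1-e^{-t}$. What the paper's arrangement buys is that convexity at infinity and Grayson's hypotheses are checked once, for one explicit metric, and that transverse continuity reduces to continuity of the flow in the initial curve and the metric over a \emph{finite} time interval (their Theorem \ref{continuityCSF}), which is what Angenent's framework actually provides.

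The one genuine gap, which you partly flag yourself, is the endpoint $s=1$. To make $(p,s)\mapsto\gamma^s(p)$ an isotopy up to and including $s=1$, and to extend it by a time-dependent vector field that remains bounded and transversally continuous, you need the velocity $\partial_s\gamma^s=e^{t}\,kN$ to stay under control as $t\to\infty$; this requires uniform (over the leafwise position and the transverse parameter) exponential decay of the curvature and of all its derivatives, i.e.\ a quantitative, parametrized strengthening of Grayson's qualitative $C^\infty$ convergence. Such decay is plausible on hyperbolic leaves (the closed geodesic is a nondegenerate minimum of length), but it is not supplied by the results you cite, and the required uniformity in the transverse direction is exactly the delicate point. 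This is precisely the difficulty the paper's finite-time-plus-graph-interpolation device is designed to avoid, and the cleanest repair of your argument is to adopt it: stop the flow at a finite $\tau$ where all curves are uniformly $C^1$-close to their limits and interpolate linearly in Fermi/collar coordinates. A second, smaller point: you should verify Grayson's standing hypothesis (completeness and convexity at infinity) for the leaves equipped with the \emph{varying} metric $g'$, not just for the model metric; this follows from compactness of $M$ and uniform comparability of $g'$ with $g_{1,0}$ on the pairs of pants, but it deserves a sentence.
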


Recall that the manifold $M$ is obtained by gluing via Smale's solenoid $f$ the boundary components of the fundamental domain $M_0=\overline{\fT}\moins f(\fT)$. As we have already seen, this gluing operation determines a torus in $M$ that we will denote $T$ through this section, which intersects the leaves according to the meridians. Note that the exceptional fiber of $M_0$ provides a circle $S$ transverse to every leaf and enables us to parametrize the torus by the meridians, i.e. to write:

$$T=\bigcup_{s\in S} \gamma_s,$$
where the $\gamma_s$ are the meridians of $\F$. For this reason, we call $T$ the \emph{meridian torus} of $\F$. We recall that inside the manifold $M$, it is canonical in the sense that it is the unique incompressible torus up to isotopy. However, constructing an isotopy from another incompressible torus to $T$ which is leaf preserving is much more delicate. We propose to show below how to treat this difficulty.

\subsubsection{The geodesic torus}
Until the end of the paper, we fix a hyperbolic metric $g'$ for the Hirsch foliation $(M,\F)$.

\paragraph{Tubular neighborhoods.} Fixing a smooth vector field $X$ transverse to $\F$ identifies the normal bundle $TM/T\F$ with the subbundle $\NN^{\F}$ generated by $X$. The projection along the flow lines of $X$ provides a smooth submersion $\Pi$ of a tubular neighborhood of uniform size $\delta$ of the zero section, denoted by $\NN^{\F}_{\delta}$.

By shrinking the size of the neighborhood, we may assume that $\Pi$ 
induces a local diffeomorphism on the restriction of the normal bundle to any leaf $L$ of $\F$. Hence $\hcF=\Pi^{-1}(\F)$ is a smooth local foliation of $\NN^{\F}_{\delta}$ which induces $\F$ in the zero section.

\paragraph{Reeb's stability and geodesic torus.}

Let $o\in S$. The surface $(L_o,g_{L_o}')$ is hyperbolic and does not have any cusp: as a consequence there exists a unique geodesic $\gamma_o'$ in the free homotopy class of $\gamma_o$.

Consider now a small collar neighborhood $U\dans L_o$ of $\gamma_o'$. The foliation $\hcF$ induces a foliation of the tubular neighborhood $\NN_{\delta}^{\F}(U)$. The generator of $\pi_1(U)$ is without holonomy since it is freely homotopic to a meridian. Hence, the \emph{Reeb stability theorem} states that in a small tubular neighborhood $\NN_U$ of $U$, the foliation $\F$ induces a trivial foliation.

Parametrize the foliation of $\NN_U$ by coordinates $A\times(-1,1)$, where $A$ is an annulus and $U$ is sent on to $A\times\{0\}$: the metric $g$ induces a family of hyperbolic metrics on the $A\times\{s\}$ which varies continuously with $s$. In $A\times\{0\}$, we have a copy of $\gamma_o'$ which is a geodesic, denoted by $c_0$. Since the geodesic in a given free homotopy class varies continuously with the hyperbolic metric (this is due to the persistence of periodic orbits of the geodesic flow on a hyperbolic surface, which are normally hyperbolic), there exists a small $\delta'$ such that for all $s\in[-\delta',\delta']$, there is a unique geodesic $c_s$ of $A\times\{s\}$ in the non-trivial free homotopy class and that $s\mapsto c_s$ is continuous in the smooth topology. This gives sense to the following property:

\begin{lemma}
\label{geodesictorus}
The map $s\mapsto\gamma_s'$ is continuous in the $C^{\infty}$-topology. In particular, $T'=\bigcup_{s\in S} \gamma_s'$ forms a topological torus called the geodesic torus.
\end{lemma}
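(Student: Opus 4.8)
The plan is to reduce the continuity statement to a local assertion in the transverse parameter and then to globalise it, the local input being essentially the discussion that precedes the statement. Fix $o\in S$; that discussion provides a Reeb-stable foliated neighbourhood of a collar $U$ of $\gamma_o'$ inside $L_o$, with coordinates $A\times(-1,1)$, together with a family $s\mapsto c_s$, continuous in the $C^{\infty}$ topology, of closed geodesics of the plaques $A\times\{s\}$ representing the essential class of the annulus $A$. The first thing to check is that $c_s=\gamma_s'$. Via the submersion $\Pi$ the plaque $A\times\{s\}$ is an open subset of a leaf $L_{s'}$ with $s'$ close to $o$, and its core circle is freely homotopic, inside that leaf, to the meridian $\gamma_{s'}$: this holds at $o$ since $U$ is a collar of $\gamma_o'$, which is freely homotopic to $\gamma_o$, and it persists for $s$ near $o$ after enlarging the Reeb-stable chart so that it also contains an annulus in $L_o$ joining $\gamma_o'$ to $\gamma_o$. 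Because each leaf is a complete hyperbolic surface without cusps, the free homotopy class of a meridian carries a unique closed geodesic; hence $c_s=\gamma_s'$. In particular the local families obtained around different base points agree on overlaps, so they patch into a globally defined map $s\mapsto\gamma_s'$ on $S$ which is continuous in the $C^{\infty}$ topology.

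For the ``in particular'' statement I would invoke two classical facts about hyperbolic surfaces: the geodesic representative of a simple closed curve is again simple, and the geodesic representatives of two disjoint essential simple closed curves are either disjoint or equal. The meridians $\{\gamma_s\}_{s\in S}$ are pairwise disjoint, since together they foliate the meridian torus $T$; I claim the geodesics $\{\gamma_s'\}_{s\in S}$ are pairwise disjoint as well. If $s\neq s'$ and $L_s\neq L_{s'}$ this is automatic. If $L_s=L_{s'}$, one checks that $\gamma_s$ and $\gamma_{s'}$ are not freely homotopic in their common leaf: the inclusion of a compact incompressible subsurface of the leaf containing both of them is $\pi_1$-injective, and in the resulting free group the two boundary classes — arising from distinct pairs of pants of the pants decomposition — have distinct cyclic word lengths. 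Hence $\gamma_s'\neq\gamma_{s'}'$, and by the second fact they are disjoint. Now the graph $E=\{(s,p):s\in S,\ p\in\gamma_s'\}$ is a closed, hence compact, subset of $S\times M$, and the $C^{\infty}$-continuity of $s\mapsto\gamma_s'$ makes it a locally trivial circle bundle over the circle $S$, hence a closed surface; it is orientable (being cooriented in the oriented manifold $M$), so $E$ is a torus. The evaluation map $E\to M$ is a continuous injection by the disjointness and simplicity just proved, hence — being a continuous injection from a compact space — a topological embedding. Therefore $T'=\bigcup_{s\in S}\gamma_s'$ is an embedded topological torus.

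The genuine obstacle is the uniform identification $c_s=\gamma_s'$ over all of $S$: one must be sure that the geodesic families supplied locally by Reeb stability and by persistence of the normally hyperbolic closed geodesic glue into a single global section, which rests on the uniqueness of the geodesic in the meridian class of each leaf and on the persistence of the free homotopy relation between the core of a plaque and the corresponding meridian. Once that is in place, the disjointness needed for the torus statement is a routine exercise in free groups and planar topology, and the passage from a continuous disjoint family of circles over $S$ to an embedded torus is purely formal.
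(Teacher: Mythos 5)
Your argument is correct and takes essentially the same route as the paper: the paper's proof of the continuity claim is precisely the discussion preceding the lemma (Reeb stability around a collar of $\gamma_o'$, continuous dependence of the plaque metrics on the transverse parameter, and persistence of the normally hyperbolic closed geodesic in the meridian class), and your identification $c_s=\gamma_s'$ via uniqueness of the geodesic representative is the intended gluing step. The disjointness and embedding details you add for the ``topological torus'' conclusion are not spelled out in the paper, but they are correct and fill in what the authors leave implicit.
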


Our goal is to show that this geodesic torus can be leaf-isotoped to the meridian torus. By a foliated version of the theorem of extension of isotopies, it will be possible to construct the identity isotopy of the Hirsch foliation $(M,\F)$ that we look for.

\begin{proposition}
\label{isotopictori}
Let $g'$ be a hyperbolic metric on the Hirsch foliation. Let $T'$ be the associated geodesic torus. Then there exists a leaf isotopy $(\phi_t)_{t\in[0,1]}$ from $T'$ to the meridian torus $T$.
\end{proposition}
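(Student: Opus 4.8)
The plan is to bring the meridian torus $T$ onto the geodesic torus $T'$ leaf by leaf by running the curve shortening flow, and then to reverse time. Both $T$ and $T'$ are compact embedded tori transverse to $\F$, each fibered over $S$ with fibers the meridians $\gamma_s$, respectively the geodesic meridians $\gamma_s'$ (Lemma \ref{geodesictorus}). Inside a fixed leaf $L=L_s$ the meridians lying on $T$ form a pairwise disjoint family of essential simple closed curves (they are the pants curves of a pants decomposition of $L$), and, since $L$ carries a complete hyperbolic metric with no cusp, $\gamma_s'$ is the \emph{unique} $g'$-geodesic in the free homotopy class of $\gamma_s$.

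\textbf{Flowing.} For each $s\in S$ run the curve shortening flow in $(L_s,g'|_{L_s})$ with initial curve $\gamma_s$, and denote by $\gamma_s(u)$, $u\in[0,\infty)$, the evolving curve (parametrized proportionally to $g'$-arclength, with a normalization depending continuously on $s$). Since $\gamma_s$ is an essential embedded closed curve on a complete hyperbolic surface, Grayson's theorem \cite{Grayson1989} shows that the flow exists for all time, that $\gamma_s(u)$ remains embedded, and that $\gamma_s(u)\to\gamma_s'$ in the $C^{\infty}$ topology as $u\to\infty$. By the avoidance principle for curve shortening, disjoint initial curves stay disjoint, and curves in distinct leaves obviously stay in distinct leaves since the flow is intrinsic to each leaf. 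Consequently, for every $u$ the collection $\{\gamma_s(u)\}_{s\in S}$ is again a pairwise disjoint family of embedded circles, one per point of $S$, hence an embedded torus $T_u$ transverse to $\F$ with $T_0=T$ and with $T_u$ converging to $T'$ as $u\to\infty$.

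\textbf{Uniformity in the transverse parameter.} Since $S$ is compact and $s\mapsto\gamma_s'$ is continuous, the lengths $\ell(\gamma_s')$ are bounded above and below, so by the Collar Lemma (see Section \ref{sectionstandardness} and \cite{Bu}) the collars of the geodesics $\gamma_s'$ have a size bounded below independently of $s$. This produces a rate $\lambda>0$, independent of $s$, such that $\gamma_s(u)$ converges to $\gamma_s'$ like $e^{-\lambda u}$, together with all its derivatives (apply the estimate to the linearized flow). Fix a homeomorphism $r:[0,1)\to[0,\infty)$ and let $\phi_t':T\to M$ be the leafwise embedding realizing $\gamma_s(r(t))$ on each $\gamma_s$ for $t<1$, and $\gamma_s'$ for $t=1$. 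Smooth dependence of the solutions of the (uniformly) parabolic curve shortening equation on the initial data and on the metric coefficients — which vary transversally in the $C^{\infty}$ topology — shows that $(t,x)\mapsto\phi_t'(x)$ is leafwise smooth and transversally continuous in the $C^{\infty}$ topology on $[0,1)$, and the uniform exponential estimate extends this up to $t=1$, where $\phi_1'(T)=T'$. Reversing time, $\phi_t=\phi_{1-t}'\circ(\phi_1')^{-1}|_{T'}$ is the required leaf isotopy from $T'$ to $T$.

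\textbf{Main obstacle.} The delicate point is the passage from the qualitative, leaf-by-leaf long-time convergence given by Grayson's theorem to a quantitative statement uniform across the transversal $S$ — exponential convergence with a uniform rate and with uniform control of all derivatives — so that the individual curve shortening flows assemble into a single family of embedded tori, leafwise smooth and transversally $C^{\infty}$, that actually reaches $T'$ at the finite rescaled time $t=1$. Compactness of $S$ together with the uniform collar geometry around the geodesics $\gamma_s'$ is exactly what makes this work. Once this leaf isotopy of tori is in hand, it is promoted to a genuine identity isotopy of $M$ (Proposition \ref{makemeridiansgeo}) by a foliated version of the isotopy extension theorem, using that $T$ is compact and the family is leaf-preserving and transversally smooth.
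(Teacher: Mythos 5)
Your overall strategy -- interpolate between the two tori by running the curve shortening flow leafwise and invoking Grayson for long-time existence, embeddedness, avoidance and smooth convergence, plus Angenent-type continuous dependence for transverse continuity -- is the same as the paper's, up to reversing the roles of the two tori (the paper flows the geodesics $\gamma_s'$ with respect to the model metric $g_{1,0}$, for which the meridians are the limiting geodesics and for which convexity at infinity is checked; you flow the meridians with respect to $g'$ itself, so you would additionally need to verify that the leaves of $(M,\F,g')$ are convex at infinity in Grayson's sense). The genuine gap is in your endgame. You compactify the flow at $t=1$ by asserting a \emph{uniform exponential} convergence $\gamma_s(u)\to\gamma_s'$ in all $C^k$ norms, with rate independent of $s$, justified only by ``the Collar Lemma'' and a parenthetical appeal to ``the linearized flow.'' Grayson's theorem gives qualitative $C^\infty$ convergence on each leaf separately; it gives no rate, and compactness of $S$ plus uniform collar geometry does not by itself upgrade this to a uniform exponential estimate -- that would require linearizing the flow around each geodesic, exhibiting a uniform spectral gap for the relevant Jacobi-type operator over the compact family $\{\gamma_s'\}_{s\in S}$, and controlling the nonlinear remainder, none of which you carry out. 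Without it, your map $\phi_t'$ need not be continuous (let alone transversally $C^\infty$) at $t=1$, and you also do not justify that $\phi_1'$ is invertible on $T'$, which you use to reverse time. A further, related point: Angenent's continuous dependence is local in time, so transverse continuity of $s\mapsto\gamma_s(u)$ for \emph{all} $u\in[0,\infty)$ again requires the missing uniform estimates.

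The paper avoids exactly this difficulty by a two-step argument: it only runs the flow up to a \emph{finite} uniform time $\tau$ (obtained from compactness of $S$ and the qualitative convergence in Grayson's theorem) at which all curves are $\eps$-close in $C^1$ to the target meridians, and then closes the remaining gap with an explicit isotopy (Lemma \ref{closecurves}) that writes each nearby curve as a graph $\sigma(p,f(p,s),s)$ over the meridian in Fermi-type coordinates on a uniform collar and contracts the graph function linearly. If you want to salvage your version, either prove the uniform exponential decay (a nontrivial parabolic estimate), or replace your $t\to 1$ limit by the paper's finite-time-plus-graph-isotopy concatenation, which needs only the qualitative statements you already have.
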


Epstein's work \cite{Epstein} implies that two freely homotopic curves on a surface are always isotopic. However, this theorem will be of little help for our purpose, since we want to isotope curves all together, in a continuous way, without creating self-intersection of the torus. Our approach will be to use the properties of the curve shortening flow in order to construct the desired isotopy.

\paragraph{Dealing with close tori.} It will be convenient in what follows to consider on $(M,\F)$ the metric $g_{1,0}$ associated to a length function (resp. twist function) which is identically $1$ (resp. $0$). Remark in particular that for such a metric, all the meridians are geodesics. We first show that the conclusion of Proposition \ref{isotopictori} holds when the two tori are very close.

\begin{lemma}
\label{closecurves}
There exists $\eps>0$ such that if for every $s\in S$, $\gamma_s$ and $\gamma'_s$ are $\eps$-close in the $C^1$-topology, then the conclusion of Proposition \ref{isotopictori} holds.
\end{lemma}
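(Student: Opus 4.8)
The plan is to exploit the fact that when $\gamma_s$ and $\gamma_s'$ are $\eps$-close in the $C^1$ topology for all $s$, they jointly fit inside a single foliated tubular neighborhood of the meridian torus $T$ on which one has a product structure, and then construct the leaf isotopy fiberwise (i.e. leaf-by-leaf, varying continuously in $s$) by a straight-line homotopy in suitable coordinates. First I would invoke Reeb stability along each meridian $\gamma_s$ (which is holonomy-free, being freely homotopic to a meridian of the incompressible torus $T$), exactly as in the construction preceding Lemma \ref{geodesictorus}, to obtain a foliated neighborhood $\NN$ of $T$ equipped with coordinates so that the restriction of $\F$ to $\NN$ is the trivial product foliation $A\times S\times(-1,1)$ with each plaque $A\times\{s\}\times\{r\}$ contained in a leaf, and with $T=A\times S\times\{0\}$, the meridian $\gamma_s$ corresponding to $(\text{core of }A)\times\{s\}\times\{0\}$. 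Uniformity of the size of $\NN$ in $s$ follows from compactness of $S$ (this is precisely where the leafwise-smooth transverse continuity of $g_{1,0}$ and of the local foliation $\hcF$ from the previous paragraph is used).

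Next I would choose $\eps$ small enough (depending only on the size of $\NN$) that the hypothesis forces each geodesic $\gamma_s'$ to lie in $\NN$ and moreover, within the leaf $L_s$ and in the annular coordinate $A\times\{s\}\times(-1,1)$, to be a graph over the core circle of $A$: since $\gamma_s'$ is $C^1$-close to $\gamma_s$, which is the core circle at level $r=0$, one can write $\gamma_s' = \{(\theta, s, u_s(\theta)):\theta\in S^1\}$ for a smooth function $u_s:S^1\to(-1,1)$ with $\|u_s\|_{C^1}$ small, and $s\mapsto u_s$ continuous in the $C^\infty$ topology (here one upgrades the $C^1$ closeness hypothesis to smooth dependence using Lemma \ref{geodesictorus}, which gives smoothness of $s\mapsto\gamma_s'$ a priori). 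Then I would define, for $t\in[0,1]$ and inside $\NN$, a leafwise diffeomorphism $\phi_t$ in these coordinates by $\phi_t(\theta,s,r) = (\theta, s, r - t\,\chi(r)\,u_s(\theta))$ where $\chi$ is a fixed smooth bump function equal to $1$ near $r=0$ and supported in a slightly larger interval; extend $\phi_t$ by the identity outside $\NN$. For $\eps$ small this is a diffeomorphism of each leaf (the $r$-derivative is $1 - t\chi'(r)u_s(\theta) - \cdots$, which is positive when $\|u_s\|_{C^0}$ and $\|u_s\|_{C^1}$ are small relative to $\|\chi'\|_\infty$), it varies continuously in the smooth topology transversally, it fixes $S^1\times S\times\{$far from $0\}$ so it is an identity isotopy of $(M,\F)$, and $\phi_1$ sends $T'$ to $T$ since $\phi_1(\theta,s,u_s(\theta)) = (\theta,s,0)$. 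This is the required leaf isotopy, so the conclusion of Proposition \ref{isotopictori} holds.

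The main obstacle I anticipate is not the fiberwise construction — that is a routine straightening of a graph — but rather checking uniformity and transverse regularity: one must be sure that the Reeb-stability neighborhood $\NN$ can be chosen of uniform transverse size over all $s\in S$ simultaneously and with coordinates depending smoothly on the transverse parameter, and that the resulting $\phi_t$ genuinely satisfies the definition of an identity isotopy of the foliation (continuity in the smooth topology transversally, leafwise smoothness). The compactness of $S$ and the standing assumption that $\hcF$ is a smooth local foliation handle the first point; for the second, the explicit formula for $\phi_t$ together with the $C^\infty$-continuity of $s\mapsto u_s$ makes the verification mechanical. One should also take $\eps$ small enough that the isotopy does not push $\gamma_s'$ outside of a collar where the product coordinates are valid, which is automatic once $\eps$ is chosen after $\NN$ and $\chi$ are fixed.
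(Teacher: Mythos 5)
Your proposal is correct and follows essentially the same route as the paper: a uniform product (collar) neighborhood of the meridian torus obtained by compactness of $S$, the observation that $C^1$-closeness forces each $\gamma_s'$ to be a graph over $\gamma_s$ in these coordinates, and a linear interpolation of the graph functions. The only difference is that you package the interpolation as an ambient, compactly supported leafwise diffeotopy (anticipating the later foliated isotopy-extension step), whereas the paper only isotopes the torus itself, which is all that Proposition \ref{isotopictori} requires.
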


\begin{proof}
Consider the annulus $A=S^1\times (-1,1)$ endowed, say, with the usual Euclidean metric. Then there exists $\eps_0>0$ such that every curve $\gamma$ which is $\eps_0$ close to $S^1\times\{0\}$ in the $C^1$ topology can be written as the graph of a function $f:S^1\to (-1,1)$. This is just because if $\eps_0$ is small enough, any such curve has to remain uniformly transverse to the normal direction.

Now, consider the `collar neighborhood' $\NN$ of the meridian torus: that is to say the union of the collar neighborhoods of the meridians (see Section \ref{sectionstandardness}). By compactness of $S$ and continuity of the collar function, there exists an embedding $\sigma$ of $A\times S$ whose image is precisely $\NN$, and which sends every slice $A\times\{s\}$ on the collar neighborhood of $\gamma_s$ with a uniform Lipschitz constant. Of course, we ask that for every $s\in S$, $\sigma(.,0,s)=\gamma_s$.

The above shows that there exists a uniform $\eps>0$ such that any curve $\eps$-close to $\gamma_s$ can be parametrized as $\gamma(p)=\sigma(p,f(p),s)$, where $f:S^1\to(-1,1)$ is a smooth function.

Now assume that the geodesic torus $T'$ is $\eps$-close to the meridian torus  $T$ in the sense that for every $s\in S$, the curve $\gamma'_s$ is $\eps$-close to the meridian $\gamma_s$ in the $C^1$-topology. Then there is  smooth function $f:S^1\times S\to (-1,1)$ such that for every $s\in S$, $\gamma_s'$ is parametrized as $\sigma(p,f(p,s),s)$.

Now the map $(p,s,t)\in S^1\times S\times[0,1]\mapsto\sigma(p,(1-t)f(s,p),s)$ provides the desired isotopy.
\end{proof}

\subsubsection{The curve shortening flow}

\paragraph{Definition.} In order to isotope the geodesic torus on the meridian torus, it will be convenient to let the geodesics of $g'$ evolve along \emph{curve shortening flow}.

More precisely, we are interested in the following problem. Let $L$ one of the leaf of $\F$ endowed with the metric induced by $g_{0,1}$. Consider a smooth family of immersions of the circle $C(.,t):S^1\to L$  which solves the following system of PDE:
\begin{equation}
\label{CSFpb}
\begin{cases}\partial_t C(p,t)&=k(p,t)N(p,t)\\
             C(.,0)&=\gamma_o'
             
\end{cases}
\end{equation}
where $k(p,t)$ is the curvature of $C(p,t)$ and $N(p,t)$ the unit normal vector.

\paragraph{Properties.}
We will use the following result, which is a general theorem about the short-time existence and uniqueness of solutions of the curve shortening problem. Let us emphasize the fact that continuity with respect to the parameters (in particular with respect to the initial curve and the metric) is fundamental in our proof. At the end of this section, we will discuss in details how to get the following theorem from the classical existence theorems (see \cite{AngenentAnnals,AngenentEdin} and also \cite{Mantegazza2011}).
\begin{theorem}
\label{continuityCSF}
Let $(L,g)$ be a Riemannian surface, and $\gamma_o':S^1\to L$ be a smooth immersion. Then there exists a unique smooth solution to Problem \eqref{CSFpb} in some positive interval time.

Moreover the solution $C$ depends continuously on the initial immersion as well as on the metric $g$.
\end{theorem}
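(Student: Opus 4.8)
The plan is to reduce Theorem \ref{continuityCSF} to the classical short-time existence, uniqueness, and smooth-dependence statements for the curve shortening flow found in Angenent's work \cite{AngenentAnnals,AngenentEdin} and in Mantegazza's book \cite{Mantegazza2011}, the only extra point being to make the dependence on the ambient metric $g$ explicit and continuous. First I would recall the standard setup: writing the flow as a graph over the initial immersion $\gamma_o'$ by means of a normal exponential map, Problem \eqref{CSFpb} becomes a quasilinear scalar parabolic PDE on $S^1\times[0,\delta)$ whose coefficients are smooth functions of the one-jet of the unknown height function \emph{and} of the two-jet of the metric $g$ read in a fixed tubular neighborhood of the image of $\gamma_o'$. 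Short-time existence and uniqueness of a smooth solution then follow from the standard parabolic theory (implicit function theorem in suitable Hölder or Sobolev spaces, or Angenent's nonlinear analytic semigroup argument).

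Next I would address continuity with respect to the data. The key observation is that the parabolic existence argument is not merely an abstract existence statement: the solution is obtained as a fixed point (or via an inverse function theorem) depending smoothly — in particular continuously — on the coefficients of the equation, hence on the pair $(g,\gamma_o')$, \emph{provided one works on a common time interval}. So I would proceed in two steps. First, fix $(g_0,\gamma_0)$ with solution on $[0,\delta_0)$; a compactness/openness argument shows that for $(g,\gamma_o')$ in a small $C^\infty$-neighborhood the equation still has a smooth solution on a slightly smaller interval $[0,\delta_1)$ with uniform bounds on all derivatives. Second, on this fixed interval the map sending the coefficients to the solution is continuous in the $C^\infty$-topology, by the quantitative version of the parabolic estimates (Schauder estimates with constants depending only on the uniform bounds). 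Pulling this back through the exponential-map change of variables, which itself depends smoothly on $g$, gives the asserted joint continuity of $C$ in $(g_0',g)$.

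The main obstacle, and the only genuinely non-routine point, is bookkeeping the dependence on $g$: the normal exponential map, the curvature $k$, and the unit normal $N$ all involve $g$ and its derivatives, so one must set up the graph reparametrization in a way that is uniform as $g$ varies in a $C^\infty$-neighborhood — e.g. fixing once and for all an auxiliary background metric to define the tubular neighborhood and the function spaces, and treating $g$ purely as a (smoothly varying) coefficient of the resulting PDE. One should also note that the flow stays immersed (indeed embedded, starting from an embedded curve) for short time, and that the time of existence can be taken uniform on the neighborhood; both follow from the uniform derivative bounds. Once this is in place, the statement is a direct transcription of \cite[\ldots]{AngenentAnnals} and \cite{Mantegazza2011} and no new analysis is required.

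I would close by remarking that in our application $L$ carries a fixed hyperbolic metric and $\gamma_o'$ is a simple closed geodesic's small perturbation, so all the uniformity hypotheses above are automatically met; the continuity in $g$ is exactly what will later allow us to run the flow simultaneously on the whole continuous family of leaves of the Hirsch foliation and keep the evolving curves fitting together into an isotopy of tori.
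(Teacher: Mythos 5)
Your proposal is correct and follows essentially the same route as the paper: parametrize nearby curves as graphs over $\gamma_o'$ via an exponential-map tubular neighborhood, reduce Problem \eqref{CSFpb} to a quasilinear scalar parabolic equation on $S^1$ whose coefficients depend smoothly on the $2$-jet of $g$, and invoke Angenent's short-time existence, uniqueness, and continuous-dependence results (\cite{AngenentAnnals,AngenentEdin}, see also \cite{Mantegazza2011}). The only cosmetic difference is that you extract continuity via uniform Schauder estimates on a common time interval, whereas the paper quotes Angenent's implicit-function-theorem argument in little Hölder spaces directly; these amount to the same thing.
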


Let us come back to our situation where $L$ is a leaf of the Hirsch foliation endowed with the metric induced by $g_{0,1}$. As a Riemannian surface, the leaf is \emph{convex at infinity} in the sense of Grayson \cite{Grayson1989}: we can apply its results to the curves $\gamma_s'$.

\begin{theorem}[Grayson]
\label{propertiesCSF}
Let $L$ be a leaf of the Hirsch foliation endowed with the metric induced by $g_{0,1}$. Let $\gamma'_o$ be a smooth simple closed curve which is freely homotopic to a meridian $\gamma_o$. Then:
\begin{enumerate}
\item the solution of Problem \eqref{CSFpb} exists for every $t\in[0,\infty)$;
\item the curves $C(t)=C(S^1,t)$ stay embedded and converge to $C(\infty)=\gamma_o$ in the smooth topology;
\item if $\gamma_{s}'$ is disjoint from $\gamma_o'$ and freely homotopic to another meridian $\gamma_{s}$, then $C'(t)$ and $C(t)$ stay disjoint for all time, where $C'$ is the solution of \eqref{CSFpb} with $C'(0)=\gamma'_{s}$.
\end{enumerate}
\end{theorem}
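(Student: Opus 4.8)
The statement is essentially the content of Grayson's work \cite{Grayson1989} on the curve shortening flow for embedded curves on surfaces convex at infinity, combined with the parabolic maximum principle; the plan is to check that the hypotheses apply to the leaves of the Hirsch foliation and to use the hyperbolic geometry of $g_{0,1}$ to identify the limit. By Theorem \ref{continuityCSF} the solution $C$ of \eqref{CSFpb} with $C(\cdot,0)=\gamma_o'$ exists and is unique on a maximal time interval $[0,T)$, and by the Gage--Hamilton--Grayson embeddedness theorem it stays embedded for as long as it exists. As observed before the statement, $(L,g_{0,1}|_L)$ is convex at infinity in the sense of Grayson, so the support function argument (equivalently, comparison with totally geodesic half-planes) confines $C(\cdot,t)$ to the compact convex hull of $\gamma_o'$ for all $t$. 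Grayson's dichotomy then yields: either $T<\infty$ and $C(\cdot,t)$ shrinks to a round point as $t\to T$, or $T=\infty$, $C(\cdot,t)$ stays embedded, and $C(\cdot,t)$ converges in the $C^\infty$ topology to a closed geodesic of $L$.

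The first alternative is excluded for topological reasons: if $C(\cdot,t)$ shrank to a point, then for $t$ close to $T$ it would bound an embedded disk in $L$ and hence be null-homotopic; but $C(\cdot,t)$ is freely homotopic to $\gamma_o'$, which is freely homotopic to the meridian $\gamma_o$, and $\gamma_o$ is a closed geodesic of \emph{positive} length for the hyperbolic metric $g_{0,1}$, so it is not null-homotopic. Therefore $T=\infty$ and $C(\cdot,t)\to C(\infty)$, a closed geodesic of $L$; this proves item 1 and the embeddedness and convergence assertions of item 2. Since $C(\cdot,t)$ is an embedded (in particular non-degenerate) curve for every finite $t$, its free homotopy class is constant along the flow, so $C(\infty)$ lies in the free homotopy class of $\gamma_o$. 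Because $g_{0,1}$ is hyperbolic, this class contains exactly one closed geodesic, and $\gamma_o$ is such a geodesic; hence $C(\infty)=\gamma_o$, which completes item 2.

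For item 3 I would invoke the avoidance principle for the curve shortening flow. Let $C'$ be the solution of \eqref{CSFpb} issued from $\gamma_s'$, where $\gamma_s'$ is disjoint from $\gamma_o'$. By Angenent's Sturmian theory for one-dimensional parabolic equations \cite{AngenentAnnals,AngenentEdin} — or directly by the strong maximum principle applied, near a hypothetical first contact point, to the parabolic equation satisfied by the normal graph of one curve over the other — the number of intersection points of $C(\cdot,t)$ and $C'(\cdot,t)$ is non-increasing in $t$ and decreases strictly at any tangency; as it equals $0$ at $t=0$, it remains $0$, i.e. $C(t)\cap C'(t)=\emptyset$ for all $t\in[0,\infty)$. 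The only genuinely delicate point is checking that Grayson's hypotheses really hold for the (infinite topological type) leaves $L$ equipped with $g_{0,1}$ — namely convexity at infinity and the resulting confinement of the flow to a compact region, so that subconvergence to a geodesic is forced and the curve cannot escape to an end; granting this, the remaining arguments are a direct appeal to \cite{Grayson1989} and the standard parabolic maximum principle.
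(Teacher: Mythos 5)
Your proposal is correct and follows essentially the same route as the paper, which simply observes that each leaf with the metric $g_{1,0}$ is convex at infinity in Grayson's sense and then invokes the results of \cite{Grayson1989} (global existence and convergence to the unique geodesic in the free homotopy class, plus the avoidance principle for disjoint initial curves). Your additional details — excluding the shrinking alternative because the meridian class is essential, and identifying the limit with $\gamma_o$ because the meridians are geodesics of the model metric — are exactly the implicit steps the paper relies on.
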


Combining these two results, i.e. the existence of solutions of the curvature shortening flow for all times, and the local continuity with respect to the parameters, we obtain the following consequence.

\begin{proposition}
\label{timeT} 
Let $L$ be a leaf of the Hirsch foliation endowed with the metric induced by $g_{0,1}$. Let $\gamma'_o$ be a smooth simple closed curve which is freely homotopic to a meridian $\gamma_o$. Let $C$ be the solution of Problem \eqref{CSFpb}.

Then for every $\tau>0$, the function $p\in S^1\mapsto C(p,\tau)$ varies continuously in the $C^{\infty}$-topology with respect to the metric and to the initial curve $\gamma_o'$.
\end{proposition}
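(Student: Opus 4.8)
The plan is to bootstrap the short-time continuity of Theorem~\ref{continuityCSF} up to the fixed time $\tau$ by a finite continuation argument along the reference trajectory, the long-time existence of Grayson (Theorem~\ref{propertiesCSF}) being used precisely to make this argument terminate.

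First I would fix the reference pair: the leaf metric (induced by $g_{0,1}$) and the initial curve $\gamma_o'$, and let $C$ denote the corresponding solution of Problem~\eqref{CSFpb}. By Theorem~\ref{propertiesCSF} this solution is defined on all of $[0,\infty)$, so $\mathcal{K}=\{C(\cdot,t):t\in[0,\tau]\}$ is a compact family of smooth embeddings of $S^1$. Applying Theorem~\ref{continuityCSF} at each member of $\mathcal{K}$ and using compactness, one obtains a single $\delta>0$, a neighbourhood $\mathcal{V}$ of $\mathcal{K}$ in the space of smooth immersions, and a neighbourhood $\mathcal{W}$ of the reference metric, such that for every $(\gamma,h)\in\mathcal{V}\times\mathcal{W}$ the curve shortening flow of $\gamma$ for the metric $h$ is defined on $[0,\delta]$ and the time-$r$ map $(\gamma,h)\mapsto C_h(\cdot,r)$ is $C^\infty$-continuous, uniformly in $r\in[0,\delta]$; shrinking $\mathcal{V}$ and $\mathcal{W}$ if necessary one may also assume these flows remain inside $\mathcal{V}$ throughout $[0,\delta]$.

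Next I would pick $0=t_0<t_1<\dots<t_N=\tau$ with $t_{k+1}-t_k<\delta$ for all $k$, and prove by induction on $k$ that $(\gamma,h)\mapsto C_h(\cdot,t_k)$ is defined and $C^\infty$-continuous on some neighbourhood of the reference pair, with value $C(\cdot,t_k)\in\mathcal{K}$ at the reference pair. The case $k=0$ is trivial. Assuming the statement for $k$: by continuity, $C_h(\cdot,t_k)$ lies in $\mathcal{V}$ for $(\gamma,h)$ close enough to the reference pair, so the uniqueness assertion of Theorem~\ref{continuityCSF} yields the semigroup identity
\[
C_h(\cdot,t_k+r)=\bigl(\text{flow of }C_h(\cdot,t_k)\text{ for metric }h\bigr)(\cdot,r),\qquad r\in[0,t_{k+1}-t_k],
\]
which exhibits $(\gamma,h)\mapsto C_h(\cdot,t_k+r)$ as a composition of continuous maps; taking $r=t_{k+1}-t_k$ advances the induction. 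The step $k=N$ is exactly the assertion of the proposition.

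The only delicate point, and the one I expect to require the most care, is the \emph{uniformity} of the short-time interval $\delta$ in the second paragraph. Theorem~\ref{continuityCSF} a priori provides an existence time that may depend on the initial curve, so without some compactness the partition $\{t_k\}$ might have to be infinite and the induction would never reach $\tau$; this is exactly what Grayson's theorem rules out, by guaranteeing that the whole reference trajectory $\mathcal{K}$ is a compact set of smooth embedded curves, over which a common $\delta$ (and common neighbourhoods $\mathcal{V},\mathcal{W}$) can be extracted by the usual covering argument. Everything else is a routine gluing of continuous maps using uniqueness of the flow.
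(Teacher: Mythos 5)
Your proof is correct and follows the same route as the paper, which derives Proposition \ref{timeT} simply by ``combining'' the local continuity of Theorem \ref{continuityCSF} with Grayson's global existence for the reference solution; you have merely spelled out the standard continuation argument (uniform short-time interval over the compact reference trajectory, plus the semigroup identity from uniqueness) that the paper leaves implicit.
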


\paragraph{Leaf isotopy of the geodesic torus.} We are now going to use the curve shortening flow in order to get the desired leaf isotopy of the two tori.

\begin{lemma}
\label{isotopeCSF}
Let $T'$ be the geodesic torus associated to some hyperbolic metric $g'$ on the Hirsch foliation $(M,\F)$.

Endow $(M,\F)$ with the model metric $g_{0,1}$. Let $\eps>0$ be the number given by Lemma \ref{closecurves}. Denote, for every $s\in S$, by $C_s$ the solution of \eqref{CSFpb} with $C_s(.,0)=\gamma_s'$. Then:
\begin{enumerate}
\item there exists a time $\tau>0$ such that for every $s\in S$, $C_s(.,\tau)$ is $\eps$-close to $\gamma_s$ in the $C^1$-topology;
\item the function $(p,s,t)\in S^1\times S\times[0,\tau]\simeq T'\times[0,\tau]\mapsto C_s(p,t)$ provides a leaf isotopy between $T'$ and some torus $T^{\tau}$.
\end{enumerate}
\end{lemma}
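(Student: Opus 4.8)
The plan is to flow all the geodesic fibers $\gamma_s'$ of $T'$ simultaneously under the curve shortening flow of the model metric $g_{0,1}$, using Grayson's results (Theorem \ref{propertiesCSF}) to control each fiber individually and the local continuity statement (Proposition \ref{timeT}) together with the compactness of $S$ to promote this to the uniform behaviour asserted in the lemma. Throughout, recall that for the model metric $g_{0,1}$ every meridian $\gamma_s$ is a simple closed geodesic, and that by compactness of $S$ and continuity of the collar function there is a uniform width $w>0$ such that the tubular neighbourhood $\NN_s$ of $\gamma_s$ of radius $w$ contains no other closed geodesic and is a genuine collar; in particular any simple closed curve in the free homotopy class of $\gamma_s$ that is sufficiently $C^1$-close to $\gamma_s$ is a graph $y=u(x)$ over $\gamma_s$ inside $\NN_s$, where $x$ is arc length along $\gamma_s$ and $y$ is signed distance.

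\textbf{Part (1): existence of a uniform time $\tau$.} For such graphs the curve shortening flow is a uniformly parabolic quasilinear equation $u_t=F(x,u,u_x,u_{xx})$ for which $u\equiv 0$ (the geodesic $\gamma_s$) is a stationary solution. A maximum principle argument — at a positive spatial maximum of $u(\cdot,t)$ one has $u_x=0$, $u_{xx}\le 0$, and the curvature vector of the curve points toward $\gamma_s$ — shows that $\max_x|u(\cdot,t)|$ is non-increasing; combined with short-time stability and interior parabolic estimates this yields a \emph{trapping property}: there is $\eta=\eta(\eps,w)>0$ such that once a solution of \eqref{CSFpb} has entered $\NN_s$ as a graph over $\gamma_s$ with $C^1$-norm $<\eta$, it stays $C^1$-$\eps$-close to $\gamma_s$ for all later times. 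Now fix $s_0\in S$. By Theorem \ref{propertiesCSF}(1)--(2) the solution $C_{s_0}(\cdot,t)$ exists for all time and converges smoothly to $\gamma_{s_0}$, so there is $t_{s_0}$ with $C_{s_0}(\cdot,t_{s_0})\subset\NN_{s_0}$ a graph over $\gamma_{s_0}$ of $C^1$-norm $<\eta/2$. By Proposition \ref{timeT} (continuity at the fixed time $t_{s_0}$ with respect to the initial curve, which varies continuously by Lemma \ref{geodesictorus}) together with continuity of $s\mapsto\gamma_s$, there is a neighbourhood $V_{s_0}$ of $s_0$ so that for every $s\in V_{s_0}$ the curve $C_s(\cdot,t_{s_0})$ lies in $\NN_s$ as a graph over $\gamma_s$ of $C^1$-norm $<\eta$. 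Cover $S$ by finitely many $V_{s_1},\dots,V_{s_N}$, set $\tau=\max_i t_{s_i}$, and for arbitrary $s$ choose $i$ with $s\in V_{s_i}$: the trapping property applied from time $t_{s_i}\le\tau$ gives that $C_s(\cdot,\tau)$ is $\eps$-close to $\gamma_s$ in $C^1$, proving (1).

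\textbf{Part (2): the isotopy.} The map $(p,s,t)\in S^1\times S\times[0,\tau]\simeq T'\times[0,\tau]\mapsto C_s(p,t)$ keeps $S^1\times\{s\}\times\{t\}$ inside the leaf $L_s$, since curve shortening is intrinsic to the leaf, so it is leaf preserving. For each fixed $t$, each $C_s(\cdot,t)$ is an embedded circle by Theorem \ref{propertiesCSF}(2); and for $s\neq s'$ the circles $C_s(\cdot,t)$ and $C_{s'}(\cdot,t)$ are disjoint, because the fibers $\gamma_s'$ of the torus $T'$ are pairwise disjoint (Lemma \ref{geodesictorus}) and each is freely homotopic to a meridian, so Theorem \ref{propertiesCSF}(3) applies. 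Hence $(p,s)\mapsto C_s(p,t)$ is an injective immersion of the compact surface $S^1\times S$, i.e. an embedded torus $T^t$, with $T^0=T'$. Finally, $C_s(p,t)$ depends continuously in the smooth topology on the transverse parameter $s$ — again by Proposition \ref{timeT} and Lemma \ref{geodesictorus}, since both the initial curve $\gamma_s'$ and the ambient leaf vary continuously with $s$ — and is smooth in $(p,t)$ by parabolic regularity. Thus $\phi_t\colon T'\to T^t$, $\phi_t(C_s(p,0))=C_s(p,t)$, is a leaf isotopy from $T'=T^0$ to $T^\tau$.

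\textbf{Expected main obstacle.} The delicate point is the uniformity of $\tau$ in Part (1). Grayson's theorem only provides convergence leaf by leaf, and pointwise convergence together with joint continuity and compactness of $S$ does \emph{not} by itself produce a single good stopping time; what makes the argument close up is the trapping observation — once a fiber of the flowing torus is $C^1$-close to its target meridian it remains close — which rests on the maximum principle for the graphical curve shortening equation, for whose analysis the meridian being a stationary solution of $g_{0,1}$ is essential. The remaining parts are a routine bookkeeping of Theorems \ref{propertiesCSF} and Proposition \ref{timeT} against the compactness of $S$.
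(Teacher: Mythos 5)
Your argument follows the same route as the paper's: long-time existence, embeddedness and disjointness from Grayson's theorem (Theorem \ref{propertiesCSF}), transverse continuity from Proposition \ref{timeT} via the continuous dependence of the flow on the initial curve and the leafwise metric, and compactness of $S$ to extract the uniform stopping time. The only place you go beyond the paper is the trapping property in Part (1): the paper asserts that compactness of $S$ together with Grayson's convergence ``immediately'' gives a uniform $\tau$, whereas you correctly note that pointwise convergence plus joint continuity does not by itself produce a single good stopping time, and you supply the maximum-principle argument for the graphical equation over the stationary meridian that closes this step --- a worthwhile refinement rather than a different proof.
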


Before we give the proof of this lemma, let us note that, together with Lemma \ref{closecurves}, it implies Proposition \ref{isotopictori}. One just has to let run the curve shortening flow until all curves $\gamma'_s$ become $\eps$-close to the corresponding meridian $\gamma_s$ in the $C^1$-topology, and then we isotope the resulting torus on $T$ thanks to the isotopy provided by Lemma \ref{closecurves}.

\paragraph{Proof of Lemma \ref{isotopeCSF}.}

Consider the geodesic torus $T'=\bigcup_{s\in S}\gamma_s'$ associated to the hyperbolic metric $g'$, and consider for every $s\in S$, the solution $C_s(.,t)$ of Problem \eqref{CSFpb} with $C(.,0)=\gamma'_s$. We have fixed a smooth vector field $X$ transverse to $\F$.

Note first that by Grayson's theorem, all $C_s(.,t)$, $s\in S,t>0$, are disjoint embeddings of $S^1$. We want to prove the continuity of these embeddings with the transverse parameter.

For $s\in S$, consider the cylinder $K_s^0=\bigcup_{t\in[0,\infty)} C_s(t)\dans L_s$, as well as its $\eps_1$-neighborhood $K_s$, where $\eps_1$ is a small constant. Note in particular that $\gamma_s'$ and $\gamma_s$ have images included in $K_s$.

Using once again the Reeb stability theorem in a sufficiently small tubular neighborhood provides a neighborhood of $K_o$ of the form $N=\bigcup_{s\in I} K_s$, where $I\dans S$ is a small neighborhood of $o$ and $K_s\dans L_s$ is a cylinder whose fundamental group is generated by the meridian $\gamma_s'$. 

Shrinking $I$ if necessary, we can assume that the projection under the flow lines of $X$ induces a smooth map
$$\pi_{s,s'}:\overline{K_s^0}\to K_{s'}$$
which is a diffeomorphism on its image.

Given $s\in I$, consider on $\pi_{s,o}(K_s^0)\dans K_o$ the metric $(\pi_{s,o})_{\ast} g_{L_s}$. When $s$ converges to $o$ in $I$, the metric converges to $g_{L_o}$: this is another way to express the continuity of the metric of the leaves with respect to the transverse parameter. We also have, by Lemma \ref{geodesictorus}, that $\pi_{s,o}(\gamma'_s)$ converges to $\gamma'_o$ in the $C^{\infty}$-topology.

Remark that the family of curves $\pi_{s,o}(C_s(.,t))\dans L_o$ are solution of the curve shortening problem for the metric $(\pi_{s,o})_{\ast} g_{L_0}$, with initial condition $\pi_{s,o}(C_s(.,t))$.

We can now use Lemma \ref{continuityCSF}: the solution at time $\tau$ of the curve shortening problems is continuous with respect to the initial curve, as well as with the metric. This gives sense to the following sentence: `the time $t$ of the curve shortening problem with initial condition $\gamma_s$ varies continuously with respect to the transverse parameter $s$'. In particular, for every $\tau>0$, the function $(p,s,t)\in S^1\times S\times[0,\tau]\simeq T\times[0,\tau]\mapsto C_s(p,t)$ provides a leaf isotopy between $T'$ and some torus $T^{\tau}$.

Using the compactness of $S$, as well as item 2 of Grayson's theorem \ref{propertiesCSF}, we immediately get that for some uniform $\tau>0$, all embeddings $C_s(.,\tau)$ are $\eps$-close to the corresponding meridian $\gamma_s$ in the $C^1$-topology, concluding the proof of Lemma \ref{isotopeCSF} and, thus, that of Proposition \ref{isotopictori}.

\subsubsection{Foliated isotopy extension}

To conclude the proof of Proposition \ref{makemeridiansgeo} it is enough to show how to extend the leaf isotopy $\phi_t$ from $T'$ to $T$ to a leaf isotopy of $(M,\F)$. Denote $T^t=\phi_t(T')$ for $t\in[0,1]$ and $\gamma_s^t=\phi_t(\gamma_s')$, for $s\in S$.

\begin{lemma}
\label{extension}
There exists a leaf isotopy $\Phi_t$ of the Hirsch foliation $(M,\F)$ which coincides with $\phi_t$ on $T$.
\end{lemma}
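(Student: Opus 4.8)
The idea is to run the classical Thom--Munkres proof of the isotopy extension theorem --- integrate the infinitesimal generator of the isotopy to an ambient flow --- inside the category of leaf isotopies. The only point requiring care is that every object produced must be tangent to $\F$ and, in foliated charts, vary continuously in the smooth topology with the transverse parameter.

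First I would record that the isotopy $\phi_t$ furnished by Proposition \ref{isotopictori} is smooth in the time variable $t$: on the curve shortening part (Lemma \ref{isotopeCSF}) this is the smoothness in time of solutions of the parabolic equation \eqref{CSFpb}, and on the part supplied by Lemma \ref{closecurves} the dependence on $t$ is manifestly smooth. Hence we may define, for $t\in[0,1]$ and $x\in T^t:=\phi_t(T')$, the time--dependent vector field
\[
V_t(x)\ =\ \frac{d}{d\eps}\Big|_{\eps=0}\ \phi_{t+\eps}\big(\phi_t^{-1}(x)\big).
\]
For each fixed $s\in S$ the curve $\eps\mapsto \phi_{t+\eps}(\gamma_s')$ stays inside the leaf $L_s$; therefore $V_t(x)$ is tangent to $\F$ at every $x\in T^t$. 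Moreover, since $\phi$ is a leaf isotopy, $(x,t)\mapsto V_t(x)$ is smooth along the leaves and, in foliated charts, continuous in the smooth topology with the transverse parameter. Thus $V_t$ is a (leafwise smooth, transversally continuous) time--dependent section of $T\F$ defined along the embedded torus $T^t$.

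The heart of the argument is to extend $V_t$ to a global time--dependent section $\widehat V_t$ of $T\F$ over $M$ which still equals $V_t$ along $T^t$ and is supported in a neighbourhood of the track $\bigcup_{t\in[0,1]} T^t$ meeting each leaf in a relatively compact set. Each meridian $\gamma_s^t:=\phi_t(\gamma_s')$ is freely homotopic to a meridian and hence holonomy--free, so --- exactly as in the construction of the geodesic torus and of the cylinders $K_s$ --- the Reeb stability theorem provides around each $\gamma_s^t$ a foliated chart in which $\F$ is the trivial product and $T^t$ appears as a union of product slices, these charts varying continuously with $(s,t)$. Covering a neighbourhood of the track by finitely many such charts and choosing a subordinate smooth partition of unity on $M$, one extends, chart by chart, the local expression of $V_t$ off the slices $T^t$ by multiplication with a bump function in the leafwise normal direction, and then glues the local extensions by the partition of unity. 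The essential subtlety, and the main obstacle, is that this must be done transversally rather than leaf by leaf (the leaves of $\F$ are dense, so ``leaf by leaf'' is meaningless), which is precisely why one invokes Reeb stability to obtain continuously varying foliated charts around the holonomy--free meridians and a partition of unity on $M$; granting this, $\widehat V_t$ is a leafwise smooth, transversally continuous, time--dependent section of $T\F$ with $\widehat V_t|_{T^t}=V_t$.

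Finally I would let $\Phi_t$ be the flow of $\widehat V_t$ with $\Phi_0=\mathrm{Id}$. Because $\widehat V_t$ is tangent to $\F$, each $\Phi_t$ maps every leaf to itself, restricting to a diffeomorphism there; as $\widehat V_t$ has leafwise relatively compact support the leafwise flows are complete for $t\in[0,1]$, and the leafwise smoothness together with transverse continuity of $\Phi_t$ follow from smooth (resp. continuous) dependence of solutions of ODEs on initial conditions and parameters. Hence $\Phi_t$ is a leaf isotopy of $(M,\F)$. It restricts to $\phi_t$ on $T'$: for $x\in T'$ both $t\mapsto\Phi_t(x)$ and $t\mapsto\phi_t(x)$ solve $\dot y=\widehat V_t(y)$ with initial value $x$ --- using $\widehat V_t=V_t=\partial_t\phi$ along $T^t$ --- so they coincide by uniqueness. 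This proves Lemma \ref{extension}, and together with Proposition \ref{isotopictori} it yields Proposition \ref{makemeridiansgeo}.
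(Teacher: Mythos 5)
Your proposal is correct and follows essentially the same route as the paper: differentiate $\phi_t$ to obtain a leafwise-tangent, transversally continuous vector field along the track $\bigcup_t T^t$, extend it by bump functions to a vector field supported in a foliated neighbourhood of the track, and integrate. The paper secures the extension step via a uniform leafwise $\eps$-separation of the curves $\gamma_s^t$ lying in the same leaf rather than via Reeb-stability charts and a partition of unity, but this is only a minor variation of the same idea.
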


\begin{proof}
It is the same thing to ask for a leaf isotopy, or a continuous vector field $X=(H,1)$ on $M\times[0,1]$ such that:
\begin{enumerate}
\item  $H$ is tangent to $\F$ (we call this vector field the horizontal part of $X$);
\item  $H$ is smooth in the leaves of $\F$;
\item  $H$ varies continuously in the $C^{\infty}$-topology with the transverse parameter.
\end{enumerate}

The isotopy $\phi_s$ allows us to construct such a vector field $\bar{X}$ in $\TT=\bigcup_{t\in[0,1]} T^t\times\{t\}$. An argument of compactness, provides a number $\eps>0$ such that for every $t\in[0,1]$ if $\gamma^t_s$ and $\gamma^t_{s'}$ belong to the same leaf, they are distant of at least $\eps$.

For $s\in S$, $t\in[0,1]$, denote by $\nu_s^t$ the $\eps$-neighborhood of $\gamma_s^t$ in the corresponding leaf. The union $\NN$ of the $\nu_s^t\times\{t\}$ is a neighborhood of $\TT$ inside $M\times[0,1]$.

Use a bump function in order to extend $\bar{X}$ to a vector field $X=(H,1)$ with support in $\NN$ with a horizontal part which satisfies the desired property.
\end{proof}

\subsubsection{Continuity of the solutions of the curve shortening flow with respect to the initial data}

The first step in proving Therorem \ref{continuityCSF} is to note that there is a geometric invariance of the solutions of the curve shortening problem under tangential reparametrization. This essentially means that any family of immersions of the circle satisfying $\langle \partial_tC(p,t)|N(p,t)\rangle_g=k(p,t)$ can be globally reparametrized to a solution of \eqref{CSFpb} without changing $\gamma_o'$: see for example Proposition 1.3.4 and Corollary 1.3.5 of \cite{Mantegazza2011}.

Using the exponential map, for example, it is possible to extend the immersion $\gamma_o':S^1\to L$ to a smooth immersion of the cylinder $\sigma: S^1\times(-1,1)\to L$, such that $\sigma_{|\{0\}\times S^1}=\gamma'_o$. Then, any immersion $C:S^1\to L$ close enough to $\gamma'_o$ can be parametrized as $C(p)=\sigma(p,u(p))$, where $u:S^1\to(-1,1)$ is a smooth function.

Hence, proving the existence of local solutions for the curve shortening problem reduces to looking for smooth functions $u:S^1\times[0,T]\to (-1,1)$ which satisfy:

\begin{equation}
\label{CSFpb2}
\begin{cases} \langle \partial_t\sigma(p,u(p,t))|N(p,t)\rangle_g=k(p,t)\\
             u(0,.)=0.
             
\end{cases}
\end{equation}

The next step in proving the existence theorem, is to note that Problem \eqref{CSFpb2} reduces to a quasilinear equation on the function $u$ which is of the form:

\begin{equation}
\label{CSFpb3}
\begin{cases} \partial_t u=F(p,u,\partial_p u,\partial^2_p u)=\alpha(p,u,\partial_p u)\partial_p^2 u+\beta(p,u,\partial_p u)\\
             u(0,.)=0,
             \end{cases}
\end{equation}
where $\alpha,\beta$ are smooth functions and $\alpha>0$, i.e.
$$\partial_{\xi} F(p,u,\zeta,\xi)>0.$$

We refer to Section 3 of \cite{AngenentAnnals} for the derivation of this equation (the same computation is also performed in Section 1.5 of \cite{Mantegazza2011}). One checks that the $F$ depends only on the $2$-jet of $g$, and that it varies continuously with $g$ in the $C^{\infty}$-topology.

Now the approach of \cite{AngenentEdin} shows the uniquness, the existence, and which is of interest for our purpose, the continuous dependence on the parameters. 

Let us sketch Angenent's approach. Consider the \emph{Hölder spaces} $E_0=h^{\alpha}(S^1)$ and $E_1=h^{2+\alpha}(S^1)$ where $h^{\beta}(S^1)$ is defined as the completion of $C^{\infty}(S^1)$ for the usual Hölder norm of exponent $\beta$. They are Banach spaces.

Considering the open set $O_1\dans E_1$ formed by the functions $u\in E_1$ of modulus $<1$, we have a quasilinear operator
$$\mathbf{F}:u\in O_1\mapsto F(p,u,\partial_p u,\partial_p^2 u)\in E_0.$$

Problem \eqref{CSFpb3} is then equivalent to the problem $\partial_t u=\mathbf{F}(u)$, $u(0)=0$, which is precisely the one studied in \cite{AngenentEdin}. It is proven that the Fréchet derivative $D\mathbf{F}(u):E_1\to E_0$ satisfies the hypothesis of Theorem 2.7 of  \cite{AngenentEdin}, which relies on a fixed point argument in order to prove the existence and uniqueness of the equation (inside $E_1$) in some positive interval time. Theorem 2.8 of \cite{AngenentEdin} then appeals to the implicit function theorem in order to prove the continuity with respect to the parameters. 

Finally, Angenent shows in (\cite{AngenentAnnals},p.460) how this construction immediatly implies the smoothing effect of the equation. This shows that the local solution of the quasilinear problem $F$ \eqref{CSFpb3} is, as $u$, of class $C^{\infty}$, and that the variation with the initial conditions is continuous in the smooth topology. This ends the proof of Theorem \ref{continuityCSF}.

\section{Contractibility of the space of foliation diffeomorphisms}\label{sectioncontractibility}
In the preceding sections we have completed the proof of Theorem \ref{theorema}, establishing that the Teichmüller space of the Hirsch foliation is homeomorphic to the space of continuous closed curves on the plane.  Notice that in particular the Teichmüller space is contractible.

\subsection{Product structure of the space of hyperbolic metrics: proof of Theorem \ref{theoremb}}

We will now prove Theorem \ref{theoremb} which states that projection onto Teichmüller space gives the space of hyperbolic metrics on the Hirsch foliation the structure of a trivial $\diffid(M,\F)$ principal bundle.  In particular $H(M,\F)$ is homeomorphic to the product $\diffid(M,\F) \times T(M,\F)$.

\begin{proof}[Proof of Theorem \ref{theoremb}]
We have already constructed a global continuous section (i.e. the model metrics of Section \ref{sectionmodels}) of the projection from the space of hyperbolic metrics $H(M,\F)$ to the Teichmüller space $T(M,\F)$.  Hence, all that remains is to show that the action of the space of diffeomorphisms $\diffid(M,\F)$ is free and that the bundle is trivial.

To establish the first point (that the action is free) consider a diffeomorphism $f \in \diffid(M,\F)$ fixing a hyperbolic metric $g$ on the foliation.  We will show that $f$ must be the identity map.

To see this notice that, by definition, there is a leaf preserving isotopy $F:[0,1] \times M \to M$ with $F_0$ equal to the identity and $F_1 = f$.  Pick any leaf $L$ in the foliation and consider a locally isometric cover $\pi:\D \to L$ from the Poincaré disk (with the usual hyperbolic metric $4/(1-|z|^2)^2|\d z|^2$).  One may lift the isotopy $F$ (restricted to $L$) to the disk via $\pi$ obtaining an isotopy $\varphi:[0,1]\times \D \to \D$ starting at the identity.  Since $M$ is compact one obtains that the hyperbolic distance between $\varphi_0(z)$ and $\varphi_1(z)$ is uniformly bounded.  This implies that $\varphi_1$ extends continuously to the boundary of $\D$ as the identity.  However $\varphi_1$ is a lift of $f$ restricted to $L$ and hence is an isometry of the Poincaré metric on $\D$.  This implies that $f$ is the identity on the leaf $L$ and, by the same argument, on all leaves of the Hirsch foliation.

This concludes the proof that the projection from $H(M,\F)$ to $T(M,\F)$ gives the former the structure of a $\diffid(M,\F)$ principal fiber bundle.

Since the base of the fiber bundle $T(M,\F)$ is contractible one concludes that the bundle is trivial and in particular that the space of metrics $H(M,\F)$ is homeomorphic to the product $T(M,\F) \times \diffid(M,\F)$.
\end{proof}

\subsection{Contractibility : proof of Theorem \ref{theoremc}}

To conclude, we will show that the space of hyperbolic metrics is contractible which implies (combined with Theorems \ref{theorema} and \ref{theoremb}) that the identity component of the space of leaf preserving diffeomorphisms of the Hirsch foliation must be contractible as well, i.e. Theorem \ref{theoremc}.

\begin{proof}[Proof of Theorem \ref{theoremc}]
In view of theorems \ref{theorema} and \ref{theoremb} it suffices to show that $H(M,\F)$ is contractible to conclude that $\diffid(M,\F)$ must be as well.

To see this fix a reference hyperbolic metric $g$ on the Hirsch foliation and an orientation (constant in any local foliated chart) for each leaf.  Following Sullivan (see \cite[Section 5]{Sullivan}) we consider the unit tangent bundle $\TT$ of the foliation with respect to $g$ and the function $\pi:\TT \times \D \to M$ such that for each $v \in \TT$ the map $\pi_v$ is the unique locally isometric orientation preserving cover from $\D$ (with the Poincaré metric) to the leaf containing $v$ which sends the unit tangent vector at $0$ pointing towards the positive real axis to $v$.

Hence $\TT$ has been identified with the space of locally isometric orientation preserving covers from $\D$ into $(M,\F)$.  As noted by Sullivan, under this identification, there is a natural action of the group $\Isom^+(\D)$ of orientation preserving isometries of $\D$ on $\TT \times \D$ where the action of an isometry $\gamma$ on a pair $(v,z)$ is defined as $(v', \gamma(z))$ where $v'$ satisfies $\pi_{v'} = \pi_v \circ \gamma^{-1}$.

Given any hyperbolic metric $g'$ on the Hirsch foliation and a vector $v \in \TT$ one may lift $g'$ to $\D$ using the map $\pi_v$ to obtain a hyperbolic metric whose Beltrami coefficient we denote by $\mu_v$.  This family of coefficients is invariant under the action of $\Isom^+(\D)$ on $\TT \times \D$.

We define a path of hyperbolic metrics $g_t: t \in [0,1]$ from $g$ to $g'$ by letting $g_t$ be the unique hyperbolic metric on the foliation whose family of Beltrami coefficients on $\TT \times \D$ (defined as in the previous paragraph) is $t\mu_v$.

To see that for each $t$ the Beltrami coefficients $t\mu_v$ really come from a conformal structure on the Hirsch foliation, notice that the pullback $f^*\mu$ of a Beltrami coefficient $\mu$ under a conformal map $f$ satisfies $f^*\mu(z) = \overline{f'(z)}\mu(f(z))/f'(z)$, and equality is preserved if one multiplies both sides by a factor $t$.

The fact that the metric $g_t$ exists for each $t$ is a consequence of Candel's work (in particular \cite[Corollary 4.2]{Candel}).  The fact that it varies continuously with respect to $t$ and the initial metric $g'$ follows from the continuity of the solutions of the Beltrami equation with respect to the coefficient (see \cite{ahlfors-bers1960}).
\end{proof}

\section*{Acknowledgements}

The authors were supported by a post-doctoral grant financed by CAPES respectively at IMPA in Rio de Janeiro and at UFRGS in Porto Alegre.  The second author was partially supported by CNPq project number 407129/2013-8 and SNI (Uruguay).

This paper grew out of visits of S.A. to IMERL in Montevideo, and of P.L. to IMB in Dijon and to IMPA in Rio de Janeiro. These visits were respectively financed by the IFUM, the ANR \emph{DynNonHyp} BLAN08-2 313375 and by IMPA. We wish to thank the warm hospitality and support of these institutions.

It is also a pleasure to thank Christian Bonatti and Bin Yu for discussions about $3$-dimensional topology, as well as Graham Smith and Alberto Verjovsky for discussions about the curve shortening flow, and José M. Rodriguez García for discussions about analytic capacity. Fernando Alcalde, Françoise Dal'bo, Matilde Martínez and Alberto Verjovsky shared their recent work on surface laminations, we are very grateful to all of them. A special thank goes to Michele Triestino who not only read and commented an early draft of this paper, but also brought our attention on Schwartz's work \cite{Schwartz1992}.  Finally, we also benefited from conversations with Bertrand Deroin, Matilde Mart\'inez, Rafael Potrie, and François Ledrappier on preliminary versions of this text.

This work would not have been possible without the encouragement of Sixto Rodriguez, and that's a concrete cold fact.

\newcommand{\etalchar}[1]{$^{#1}$}

\begin{flushleft}
{\scshape S\'ebastien Alvarez}\\
Instituto Nacional de Matem\'atica Pura e Aplicada (IMPA)\\
Estrada Dona Castorina 110, Rio de Janeiro, 22460-320, Brasil\\
email: salvarez@impa.br

\smallskip

{\scshape Pablo Lessa}\\
IMERL, Facultad de Ingenier\'ia, Universidad de la Rep\'ublica\\
Julio Herrera y Reissig 565 CP11300.\\
Montevideo, Uruguay.\\
email: pablitolessa@gmail.com 
\end{flushleft}
\end{document}